\newcommand{\ol}{\overline}
\theoremstyle{plain}   \newtheorem{theorem}{Theorem}[section]
\theoremstyle{plain}   \newtheorem{proposition}[theorem]{Proposition}
\theoremstyle{plain}   \newtheorem{corollary}[theorem]{Corollary}
\theoremstyle{plain}   \newtheorem{lemma}[theorem]{Lemma}
\theoremstyle{plain}   \newtheorem{remark}[theorem]{Remark}
\theoremstyle{definition}   
\theoremstyle{definition}   
\theoremstyle{definition}   
\newcommand{\B}{\mathbb{B}}
\newcommand{\N}{\mathbb{N}}
\newcommand{\R}{\mathbb{R}}
\newcommand{\T}{\mathbb{T}}
\newcommand{\cA}{\mathcal{A}}
\newcommand{\cB}{\mathcal{B}}
\newcommand{\cC}{\mathcal{C}}
\newcommand{\cJ}{\mathcal{J}}
\newcommand{\cV}{\mathcal{V}}
\newcommand{\vJ}{\boldsymbol{\mathcal{J}}}
\DeclarePairedDelimiter{\parens}{\lparen}{\rparen}
\DeclarePairedDelimiterX{\pres}[2]{\langle}{\rangle}{#1\,\delimsize\vert\,\mathopen{}#2}
\newcommand{\Var}{\boldsymbol{\mathcal{V}}}
\newcommand*{\supp}[2][]{\mathrm{supp}\parens[#1]{#2}}
\newcommand*{\plac}{{\mathsf{plac}}}
\newcommand*{\placn}{{\mathsf{plac}_n}}
\newcommand*{\hypo}{{\mathsf{hypo}}}
\newcommand*{\sylv}{{\mathsf{sylv}}}
\newcommand*{\sylvh}{{\mathsf{sylv}^{\#}}}
\newcommand*{\baxt}{{\mathsf{baxt}}}
\newcommand*{\stal}{{\mathsf{stal}}}
\newcommand*{\taig}{{\mathsf{taig}}}
\newcommand*{\rPS}{{\mathsf{rPS}}}
\newcommand*{\rPSn}{{\mathsf{rPS}_n}}
\newcommand*{\styl}{{\mathsf{styl}}}
\newcommand*{\styln}{{\mathsf{styl}_n}}
\newcommand*{\kisn}{{\mathsf{Kis}_n}}
\newcommand*{\catn}{{\mathsf{Cat}_n}}
\newcommand*{\plit}{\mathrm{P}}
\newcommand*{\pplac}[2][]{\plit_{\plac}\parens[#1]{#2}}
\tikzset{
	pretableaumatrix/.style={
		ampersand replacement=\&,
		matrix of math nodes,
		outer sep=1mm,
		inner sep=0mm,
		anchor=center,
		row sep={between borders,-\pgflinewidth},
		column sep={between borders,-\pgflinewidth},
		dottedentry/.style={densely dotted},
		spaceentry/.style={draw=none,execute at begin node=\null},
	},
	pretableaunode/.style={
		font=\small,
		draw=gray,
		sharp corners,
		rectangle,
		anchor=base,
		text height=3.75mm,
		text depth=1.25mm,
		minimum height=5mm,
		minimum width=5mm,
		inner sep=0mm,
		outer sep=0mm,
	},
	tableaumatrix/.style={
		pretableaumatrix,
		every node/.append style={
			pretableaunode,
		},
	},
	medtableaumatrix/.style={
		pretableaumatrix,
		every node/.append style={
			pretableaunode,
			font=\footnotesize,
			text height=2.75mm,
			text depth=.75mm,
			minimum height=3.5mm,
			minimum width=3.5mm
		},
	},
	smalltableaumatrix/.style={
		pretableaumatrix,
		every node/.append style={
			pretableaunode,
			font=\scriptsize,
			text height=1.85mm,
			text depth=.15mm,
			minimum height=2.5mm,
			minimum width=2.5mm,
		},
	},
	tinytableaumatrix/.style={
		pretableaumatrix,
		every node/.append style={
			pretableaunode,
			font=\tiny,
			text height=1.25mm,
			text depth=.15mm,
			minimum height=1.75mm,
			minimum width=1.75mm
		},
	},
	tableau/.style={
		baseline=-1.25mm,
		every matrix/.style={tableaumatrix},
	},
	medtableau/.style={
		baseline=-1.25mm,
		every matrix/.style={medtableaumatrix},
	},
	smalltableau/.style={
		baseline=-1.25mm,
		every matrix/.style={smalltableaumatrix},
	},
	preshapetableaumatrix/.style={
		pretableaumatrix,
		execute at end cell={\strut},
		every node/.append style={
			draw=black,
			anchor=base,
			inner sep=0mm,
			outer sep=0mm,
		},
		shadedentry/.style={fill=gray},
		darkshadedentry/.style={fill=darkgray},
	},
	medshapetableaumatrix/.style={
		preshapetableaumatrix,
		every node/.append style={
			text height=2.75mm,
			text depth=.75mm,
			minimum height=3.5mm,
			minimum width=3.5mm
		},
	},
	shapetableaumatrix/.style={
		ampersand replacement=\&,
		matrix of math nodes,
		outer sep=0mm,
		inner sep=0mm,
		anchor=base,
		row sep={between borders,-\pgflinewidth},
		column sep={between borders,-\pgflinewidth},
		execute at begin cell={\strut},
		every node/.append style={draw,anchor=base,text height=1mm,text depth=.5mm,minimum size=1.5mm,inner sep=0mm,outer sep=0mm},
	},
	shapetableau/.style={
		every matrix/.style={shapetableaumatrix},
	},
	topalign/.style={
		every matrix/.append style={name=maintableau,anchor=maintableau-1-1.base},
		baseline,
	},
}
\tikzset{
	bst/.style={
		standard/.style={
			font=\small,
			draw=gray,
			rounded rectangle,
			minimum width=4.5mm,
			minimum height=4.5mm,
			inner xsep=0mm,
			inner ysep=1mm,
			outer sep=0mm,
			line width=.5pt,
		},
		empty/.style={
			minimum width=3mm,
			minimum height=3mm,
		},
		triangle/.style={
			isosceles triangle,
			isosceles triangle apex angle=60,
			shape border rotate=90,
			rounded corners=2mm,
			minimum width=8mm,
			inner xsep=0mm,
			inner ysep=.5mm
		},
		blank/.style={
			draw=none,
		},
		nodecount/.style={
			blank,
			font=\scriptsize,
		},
		every node/.style={standard},
		every child/.style={draw=black,line width=.6pt},
		level distance=10mm,
		level 1/.style={sibling distance=60mm},
		level 2/.style={sibling distance=30mm},
		level 3/.style={sibling distance=15mm},
	},
	medbst/.style={
		bst,
		level distance=10mm,
		level 1/.style={sibling distance=15mm},
		level 2/.style={sibling distance=15mm},
		level 3/.style={sibling distance=15mm},
	},
	smallbst/.style={
		bst,
		level distance=8mm,
		level 1/.style={sibling distance=10mm},
		level 2/.style={sibling distance=10mm},
		level 3/.style={sibling distance=10mm},
	},
	tinybst/.style={
		bst,
		level distance=5mm,
		level 1/.style={sibling distance=8mm},
		level 2/.style={sibling distance=8mm},
		level 3/.style={sibling distance=8mm},
		every node/.append style={
			font=\footnotesize,
		},
		triangle/.append style={
			rounded corners=1mm,
			minimum width=7mm,
			inner xsep=-.5mm,
		},
	},
	microbst/.style={
		bst,
		standard/.append style={
			font=\scriptsize,
			minimum width=3mm,
			minimum height=3mm,
			inner ysep=.25mm,
		},
		level distance=3mm,
		level 1/.style={sibling distance=6mm},
		level 2/.style={sibling distance=6mm},
		level 3/.style={sibling distance=6mm},
	},
	nanobst/.style={
		bst,
		standard/.append style={
			font=\tiny,
			minimum width=2mm,
			minimum height=2mm,
			inner ysep=.25mm,
		},
		level distance=2mm,
		level 1/.style={sibling distance=4mm},
		level 2/.style={sibling distance=4mm},
		level 3/.style={sibling distance=4mm},
	},
}
\title[Tropical Representations and Identities of the Stylic Monoid]{Tropical Representations and Identities of the Stylic Monoid}
\begin{document}
\maketitle
\begin{center}
THOMAS AIRD
\footnote{Department of Mathematics, University of Manchester. Email: \href{mailto:thomas.aird@manchester.ac.uk}{thomas.aird@manchester.ac.uk}.}
\footnote{This research was supported by a University of Manchester Dean's Scholarship Award.}
and DUARTE RIBEIRO
\footnote{Center for Computational and Stochastic Mathematics (CEMAT), University of Lisbon and Center for Mathematics and Applications (NovaMath), FCT NOVA and Department of Mathematics, FCT NOVA. Email: \href{mailto:dc.ribeiro@campus.fct.unl.pt}{dc.ribeiro@campus.fct.unl.pt}}
\footnote{This author's work was supported by National Funds through the FCT -- Funda\c{c}\~{a}o para a Ci\^{e}ncia e a Tecnologia, I.P., under the scope of the projects PTDC/MAT-PUR/31174/2017, UIDB/04621/2020 and UIDP/04621/2020 (Center for Computational and Stochastic Mathematics), and UIDB/00297/2020 and UIDP/00297/2020 (Center for Mathematics and Applications).}
\\
\end{center}

\begin{abstract}
    We exhibit a faithful representation of the stylic monoid of every finite rank as a monoid of upper unitriangular matrices over the tropical semiring. Thus, we show that the stylic monoid of finite rank $n$ generates the pseudovariety $\vJ_n$, which corresponds to the class of all piecewise testable languages of height $n$, in the framework of Eilenberg's correspondence. From this, we obtain the equational theory of the stylic monoids of finite rank, show that they are finitely based if and only if $n \leq 3$, and that their identity checking problem is decidable in linearithmic time. We also establish connections between the stylic monoids and other plactic-like monoids, and solve the finite basis problem for the stylic monoid with involution.
\end{abstract}


\section{Introduction}

Identities and varieties of semigroups and monoids have long been studied, and several important questions arise in this study, such as the question of whether a semigroup admits a finite basis for its equational theory. This question is known as the finite basis problem \cite{sapir_combinatorial,volkov_finitebasis}, and it is well-known that there are finite semigroups which are not finitely based \cite{perkins_bases}. Other questions regarding the variety generated by a semigroup are those of whether it contains only finitely generated subvarieties (see, for example, \cite{volkov_finitebasis}), or countably infinite subvarieties \cite{trakhtman_six}. We can also ask about the computational complexity of checking if an identity is satisfied by said semigroup. This problem is the identity checking problem \cite{kharlampovich_sapir_survey}, denoted by $\textsc{Check-Id}$, and is not only decidable for finite semigroups, but it is also known to be in the complexity class \textsf{coNP}. Due to Birkhoff's HSP Theorem \cite{birkhoff1935}, we can study these problems either by working directly with the identities, or by looking at homomorphic images, subsemigroups or direct products of other semigroups of which we already know enough to answer our questions. These problems have also been considered for involution semigroups, that is, semigroups equipped with a unary operation * which satisfies the identities $(x^*)^* \approx x$ and $(xy)^* \approx y^* x^*$ (see \cite{lee_thesis_2020} for a collection of results on this subject). In particular, the finite basis problem for finite involution semigroups has received much attention, since, contrary to intuition, finite involution semigroups and their underlying semigroups need not necessarily be simultaneously finitely based (see, for example, \cite{lee_involution_fb_reduct_not_fb,lee_involution_fb_reduct_not_fb}).

The tropical semiring $\T$, also known as the max plus semiring, is an important algebraic structure, due to its applications in many areas of mathematics, such as algebraic geometry and combinatorial optimization (see, for example, \cite{butkovic_max_linear_systems} and \cite{maclagan_sturmfels}). It is natural to consider matrices over this semiring which, under the induced operation of matrix multiplication, form monoids. These monoids have been used to great effect in representations of semigroups, in particular infinite semigroups which do not admit faithful finite dimensional representations over fields. For example, the famous bicyclic monoid has been shown to admit faithful representations over the tropical semiring \cite{daviaud_identities,izhakian_margolis_two_by_two}.

The plactic monoid $\plac$, whose elements can be identified with semistandard Young, has attracted the attention of many mathematicians, due to its connections with several areas of mathematics, such as algebraic combinatorics \cite{lothaire_2002}, representation theory \cite{green2006polynomial}, symmetric functions \cite{macdonald_symmetric} and crystal bases \cite{bump_crystalbases}. First studied by Schensted \cite{Schensted1961} and Knuth \cite{knuth1970}, and later studied in depth by Lascoux and Schützenberger \cite{LS1978}, the plactic monoid is now the focus of much attention with regards to its identities and varieties \cite{kubat_identities,izhakian_cloaktic}. 

Cain \textit{et al} \cite{ckkmo_placticidentity} have shown that the plactic monoid of finite rank $n$, denoted by $\placn$, does not satisfy any non-trivial identity of length less than or equal to $n$, hence, there is no single ``global'' identity satisfied by every plactic monoid of finite rank, and the infinite-rank plactic monoid does not satisfy any non-trivial identity. On the other hand, Johnson and Kambites \cite{johnson_kambites_tropical_plactic} have given a faithful representation of $\placn$ in monoids of upper triangular matrices over the tropical semiring $\T$, for every finite $n$. These monoids are known to satisfy non-trivial identities \cite{izhakian_identities,okninski_identities,taylor2017upper}, thus, Johnson and Kambite's result implies that every finite-rank plactic monoid satisfies non-trivial identities.

The study of the plactic monoid has given rise to a family of `plactic-like' monoids, whose elements can be uniquely identified with combinatorial objects. Monoids of this family include the hypoplactic monoid $\hypo$ \cite{novelli_hypoplactic,cm_hypo_crystal}; the sylvester monoid $\sylv$ and the \#-sylvester monoid $\sylvh$ \cite{hivert_sylvester,cm_sylv_crystal}; the Baxter monoid $\baxt$ \cite{giraudo_baxter2,cm_sylv_crystal}; the stalactic monoid $\stal$ \cite{hnt_com_hopf,priez_binary_trees}; the taiga monoid $\taig$ \cite{priez_binary_trees}; and the right patience-sorting monoid $\rPS$ \cite{cms_patience}. These monoids satisfy non-trivial identities \cite{cm_identities} and, except for the case of the right patience-sorting monoid, these identities are satisfied regardless of rank. The finite basis problem regarding the plactic-like monoids has also been studied by different authors (including the second author), using different techniques \cite{cain_malheiro_ribeiro_hypoplactic_2022,cain_malheiro_ribeiro_sylvester_baxter_2021,cain_johnson_kambites_malheiro_representations_plactic-like_2021,han2021preprint}, and the identity checking problem has been studied in \cite{cain_malheiro_ribeiro_sylvester_baxter_2021} by Cain, Malheiro and the second author.

The stylic monoid of finite rank $n$, introduced by Abram and Reutenauer in \cite{abram_reutenauer_stylic_monoid_2021} and denoted by $\styln$, is a finite quotient of the plactic monoid of rank $n$, defined by the action of words, over a finite totally ordered alphabet with $n$ letters, on the left of columns of semistandard Young tableaux, by Schensted left insertion. Its elements can be uniquely identified with so-called $N$-tableaux, and it is presented by the Knuth relations and the relations $x^2 \equiv x$, with $x \in \cA_n$. As such, to the author's knowledge, it is the first finite plactic-like monoid to be studied. It is a finite $\cJ$-trivial monoid (\cite[Theorem~11.1]{abram_reutenauer_stylic_monoid_2021}), hence, by \cite{simon_thesis_1972}, is in $\vJ_k$, the pseudovariety in Simon's hierarchy of $\cJ$-trivial monoids which corresponds to the class of all piecewise testable languages of height $k$, in Eilenberg's correspondence (\cite{Eilenberg_book76,pin1986varieties}), for some $k \in \N$. The pseudovariety $\vJ_k$ is defined by the equational theory $J_k$ of all identities $u \approx v$ such that $u$ and $v$ share the same subsequences of length $\leq k$. Blanchet-Sadri has studied these equational theories in depth (\cite{blanchet-sadri_equations_monoid_varieties_dot-depth_one_and_two_1994,blanchet-sadri_games_equations_dot-depth_hierarchy_1989,blanchet-sadri_equations_and_dot-depth_one_1993}), having shown that $J_k$ is finitely based if and only if $k \leq 3$. Johnson and Fenner, expanding upon Volkov's work \cite{volkov_reflexive_relations}, showed in \cite{johnson_fenner_unitriangular} that the variety described by $J_k$ is generated by the monoid $U_{k+1} (S)$ of $(k+1) \times (k+1)$ upper unitriangular matrices with entries in a non-trivial, idempotent commutative semiring $S$, of which the max-plus tropical semiring $\T$ is an example. 

In this work, we show that the stylic monoid of rank $n$ generates the pseudovariety $\vJ_n$. The paper is organized as follows: Section~\ref{section:background} gives the necessary background on the subject matter, namely words in Subsection~\ref{subsection:words}; identities and varieties in Subsection~\ref{subsection:identities_and_varieties}; semirings and matrix semigroups in Subsection~\ref{subsection:semirings_and_matrix_semigroups}; and the stylic monoid in Subsection~\ref{subsection:the_stylic_monoid}. In Section~\ref{section:tropical_representation_of_the_stylic_monoid}, we give a faithful representation of $\styln$ by $U_{n+1} (\T)$, thus proving that $\styln$ is in the variety generated by $U_{n+1} (\T)$, and we follow up in Section~\ref{section:main_result_and_conenctions_to_other_plactic_like_monoids} by showing that all identities satisfied by $\styln$ must also be in $J_n$, and therefore the equational theory of $\styln$ is $J_n$. From this, we deduce that the identity checking problem for $\styln$ is decidable in linearithmic time, and the variety generated by $\styln$, for $n \geq 3$, has uncountably many subvarieties. We also delve into the connections between the stylic monoid and other plactic-like monoids, by looking at the interactions between the varieties they generate. Finally, in Section~\ref{section:The_finite_basis_problem_for_the_stylic_monoid_with_involution}, we look at the finite basis problem for the stylic monoid with involution \textup{*} induced by the unique order-reversing permutation of $\cA_n$, and show that $(\styln, \textup{*})$ is finitely based if and only if $n = 1$. We also show that $(\styln, \textup{*})$ and $(U_{n+1} (\T), \,^\star)$, where $\,^\star$ is the skew transposition, do not generate the same variety for $n \geq 2$, which contrasts with the results obtained in Section~\ref{section:main_result_and_conenctions_to_other_plactic_like_monoids}.

The authors would like to note that, during the final revisions of this paper, Volkov \cite{volkov_stylic} proved the same result as Corollary~\ref{corollary:main_result} independently, by showing that $\styln$ is a homomorphic image of the Kiselman monoid $\kisn$ of rank $n$, and the Catalan monoid $\catn$ of rank $n$ is a homomorphic image of $\styln$. Both $\kisn$ and $\catn$ are known to generate the variety described by $J_n$ \cite[Theorem~8]{volkov_kiselman}.  


\section{Background}
\label{section:background}

We shall recall the necessary definitions and notations. For the necessary background on semigroups and monoids, see \cite{howie1995fundamentals}; for presentations, see \cite{higgins1992techniques}. Let $\N$ denote the set of natural numbers, without zero.


\subsection{Words}
\label{subsection:words}

Let $\Sigma$ be a non-empty set, referred to as an \emph{alphabet}, whose elements are refered to as \emph{letters}. Given an alphabet $\Sigma$, we denote the monoid of all words over $\Sigma$ under concatenation, the free monoid on $\Sigma$, by $\Sigma^*$, and we denote the empty word by $\varepsilon$. We write $|w|$ for the \emph{length} of a word $w$. For $1 \leq i \leq |w|$, we denote its $i$-th letter by $w_i$. The \textit{support} of $w$, denoted by $\supp{u}$, is the subset of $\Sigma$ of letters which occur in $w$.

For $u,v \in \Sigma^*$ we say that $u$ is a \emph{prefix} of $v$ if there exists $w' \in \Sigma^*$ such that $v = uw'$, and a \emph{subsequence} of $v$ if there exist $u_1, \dots, u_n \in \Sigma$ and $v'_1,\dots,v'_{n+1} \in \Sigma^*$ such that $u = u_1 \cdots u_n$ and $v =v'_1 u_1 v'_2 \cdots v'_n u_n v'_{n+1}$. We shall denote the prefix of the first $i$ letters of a word $w$ by $w_{\leq i}$, and denote the subsequence $u = u_1 \cdots u_n$ by its sequence of letters $u_1, \dots, u_n$.


\subsection{Identities and varieties}
\label{subsection:identities_and_varieties}

For a general background on universal algebra, see \cite{bs_universal_algebra}; on pseudovarieties, see \cite{almeida_1994_finite_semigroups}; on computation and complexity theory, see \cite{sipser2012introduction}. We also refer the reader to the survey \cite{volkov_finitebasis} on the finite basis problem for finite semigroups.

A \emph{monoid identity}, over an alphabet of variables $\Sigma$, is a formal equality $u \approx v$, where $u$ and $v$ are words in the free monoid $\Sigma^*$, and is \emph{non-trivial} if $u \neq v$. We say a variable $x$ \emph{occurs} in $u \approx v$ if $x$ occurs in $u$ or $v$. We say that a monoid $M$ \emph{satisfies} the identity $u \approx v$ if for every morphism $\psi: \Sigma^* \rightarrow M$ (also referred to as an \emph{evaluation}), the equality $\psi(u) = \psi(v)$ holds in $M$.

For a given monoid $M$, its \emph{identity checking problem} is the combinatorial decision problem $\textsc{Check-Id}{(M)}$ whose instance is an arbitrary identity $u \approx v$, and whose answer to such an instance is positive if $M$ satisfies the identity, and negative otherwise. As its input is only the identity and not the monoid, the time complexity of $\textsc{Check-Id}{(M)}$ should be measured only in terms of the size of the identity.

The class of all monoids which satisfies some set of identities $\Sigma$ is called an \emph{equational class}, and $\Sigma$ is called its \emph{equational theory}. An identity $u \approx v$ is a \emph{consequence} of a set of identities $\Sigma$ if all monoids which satisfy all identities of $\Sigma$ also satisfy $u \approx v$. An \emph{equational basis}, or simply basis, $\cB$ of an equational theory $\Sigma$ is a subset of $\Sigma$ such that each identity in $\Sigma$ is a consequence of $\cB$. We say an equational theory is \emph{finitely based} if it admits a finite basis, and \emph{non-finitely based} otherwise.

On the other hand, a class of monoids is a \emph{variety} if it is closed under taking homomorphic images, submonoids and direct products, while a class of finite monoids is a \emph{pseudovariety} if it is closed under taking homomorphic images, submonoids and finitary direct products. A \emph{subvariety} is a subclass of a variety which is itself a variety. We say a variety is \emph{generated} by a monoid $M$ if it is the smallest variety containing $M$, and denote it by $\Var(M)$. We define a pseudovariety \emph{generated} by a monoid in a parallel way.

A classical result by Birkhoff \cite{birkhoff1935} (in the context of monoids) states that a class of monoids is a variety if and only if it is an equational class. As such, a variety is uniquely determined by its equational theory. Therefore, for a given equational theory $\Sigma$, we denote the variety which corresponds to $\Sigma$ by $\Var(\Sigma)$.

An \emph{equational pseudovariety} is a pseudovariety which consists of all the finite monoids in some variety (see, for example, \cite{almeida_1994_finite_semigroups}). Though not all pseudovarieties are equational, every finitely generated pseudovariety is equational. An equational pseudovariety is defined by its equational theory. We say that a variety or an equational pseudovariety is finitely based if its equational theory is finitely based, and that a monoid is finitely based if the variety it generates is finitely based. 

For each $k \in \N$, we denote by $\vJ_k$ the pseudovariety which corresponds to the class of all piecewise testable languages of height $k$, by Eilenberg's correspondence, and by $J_k$ the set of all identities $u \approx v$ such that $u$ and $v$ share the same subsequences of length $\leq k$. The increasing sequence 
\[
\vJ_1 \subsetneq \vJ_2 \subsetneq \dots \subsetneq \vJ_k \subsetneq \dots,
\]
whose union is the pseudovariety $\vJ$ of all finite $\cJ$-trivial monoids, was introduced in \cite{simon_thesis_1972}, and is known as \emph{Simon's hierarchy of $\cJ$-trivial monoids}. Furthermore, a finite monoid is $\cJ$-trivial if and only if it is in $\vJ_k$ if and only if it satisfies all identities in $J_k$, for some $k$. Regarding whether these equational theories admit finite bases, we have the following:

\begin{enumerate}
    \item\label{basis_J_1} (\cite[folklore]{blanchet-sadri_equations_monoid_varieties_dot-depth_one_and_two_1994}) $J_1$ admits a finite basis, consisting of the following identities:
        \begin{align*}
        x^2 \approx x \quad &\text{and} \quad xy \approx yx.
        \end{align*}
    \item\label{basis_J_2} (\cite{simon_thesis_1972}) $J_2$ admits a finite basis, consisting of the following identities:
        \begin{align*}
        xyxzx \approx xyzx \quad &\text{and} \quad (xy)^2 \approx (yx)^2.
        \end{align*}
    \item\label{basis_J_3} (\cite[Proposition~4.1.6]{blanchet-sadri_games_equations_dot-depth_hierarchy_1989} and \cite{blanchet-sadri_equations_and_dot-depth_one_1993}) $J_3$ admits a finite basis, consisting of the following identities:
        \begin{align*}
        xyx^2zx &\approx xyxzx, \\
        xyzx^2tz &\approx xyxzx^2tx, \\
        zyx^2ztx &\approx zyx^2zxtx, \\
        (xy)^3 &\approx (yx)^3.
        \end{align*}
    \item (\cite[Theorem~3.4]{blanchet-sadri_equations_monoid_varieties_dot-depth_one_and_two_1994}) The equational theory $J_k$ is non-finitely based, for $k \geq 4$.
\end{enumerate}


\subsection{Semirings and matrix semigroups}
\label{subsection:semirings_and_matrix_semigroups}

For further reading on tropical semirings and their applications, see, for example, \cite{butkovic_max_linear_systems}.

A \emph{commutative semiring} $S$ is a set equipped with two binary operations $+$ and $\cdot$, such that $(S,+)$ and $(S,\cdot)$ are both commutative monoids, with respective neutral elements $0_S$ and $1_S$, that satisfy
\begin{align*}
    a \cdot (b+c) = a \cdot b + a \cdot c \quad \text{and} \quad 0_S \cdot a = 0_S,
\end{align*}
for all $a,b,c \in S$. We say $S$ is \emph{trivial} if $0_S = 1_S$, and $S$ is \emph{idempotent} if $a+a=a$, for all $a \in S$. The main relevant example of a non-trivial, idempotent commutative semiring for this paper is the \emph{tropical semifield} $\T := (\R \cup \{ -\infty \}, \oplus, \otimes)$, where $a \oplus b = \max(a,b)$ and $a \otimes b = a+b$, for $a,b \in \R \cup \{ -\infty \}$. Notice that $-\infty$ is the additive neutral element, or `zero', and $0$ is the multiplicative neutral element, or `one', of $\T$. Another relevant example is the \emph{Boolean semiring} $\B = \{0,1\}$, with maximum as addition and minimum as multiplication.

The set of all $n \times n$ matrices with entries in a commutative semiring $S$ forms a monoid under the matrix multiplication induced from the operations in $S$, and is denoted by $M_n (S)$. Its neutral element is the $n \times n$ identity matrix $I_{n \times n}$, whose diagonal entries are $1_S$ and all other entries are $0_S$, and its absorbing element is the $n \times n$ zero matrix $0_{n \times n}$, whose entries are all $0_S$. 
The submonoid of upper triangular matrices, that is, matrices whose entries below the main diagonal are $0_S$, of $M_n (S)$ is denoted by $UT_n (S)$, while the submonoid of upper unitriangular matrices, that is, upper triangular matrices whose diagonal entries are $1_S$, of $M_n (S)$ is denoted by $U_n (S)$.

Johnson and Fenner have shown in \cite[Corollary~3.3]{johnson_fenner_unitriangular} that, for $n \in \N$ and any non-trivial idempotent commutative semiring $S$, the monoid $U_{n+1} (S)$ generates $\Var(J_n)$, the variety whose equational theory is the set of identities $J_n$ defined in the previous subsection.


\subsection{The stylic monoid}
\label{subsection:the_stylic_monoid}

For a general background on the plactic monoid, see \cite[Chapter~5]{lothaire_2002}. For an in-depth look at the stylic monoid, see \cite{abram_reutenauer_stylic_monoid_2021}.

Let $\cA_n$ denote the totally ordered finite alphabet $\{ 1 < \cdots < n \}$. A \emph{semi-standard Young tableau}, or simply tableau, is a (finite) grid of cells, with down-left-aligned rows, filled with symbols from $\cA_n$, such that the entries in each row are weakly increasing from left to right, and the entries in each column are strictly decreasing from top to bottom. An example of a Young tableau is
\begin{center}  
	$\tikz[tableau]\matrix{
		5 \& 6 \\
		2 \& 3 \& 4 \\
		1 \& 1 \& 3 \& 3 \& 4 \\
	};$ .
\end{center}



Using \emph{Schensted's algorithm} \cite{Schensted1961}, we compute a unique tableau $\pplac{w}$, for each word over $\cA_n$. The \emph{plactic congruence} on $\cA_n^*$ is defined by
\[
u \equiv_\plac v \iff \pplac{u} = \pplac{v},
\]
for $u,v \in \cA_n^*$. The factor monoid $\cA_n^*/{\equiv_\plac}$ is the \emph{plactic monoid of rank} $n$, denoted by $\placn$ \cite{LS1978}. It follows from the definition of $\equiv_{\plac}$ that each element $[u]_{\plac}$ of $\plac$ can be identified with the Young tableau $\pplac{u}$. 

The plactic monoid of rank $n$ can also be defined by the presentation $\pres*{\cA_n}{\mathcal{R}_{\plac}}$ \cite{knuth1970}, where
\begin{align*}
	\mathcal{R}_{\plac} =& \left\{ (acb,cab): a \leq b < c \right\}\\
	& \cup \left\{ (bac,bca): a < b \leq c \right\}.
\end{align*}
The defining relations are known as the \emph{plactic relations}, or as the \emph{Knuth relations}. 

The \emph{stylic monoid of rank} $n$, denoted by $\styln$, is first defined in \cite[Section~5]{abram_reutenauer_stylic_monoid_2021} as the monoid of endofunctions of the set of columns over $\cA_n$ obtained by a left action of words on columns \cite[Section~4]{abram_reutenauer_stylic_monoid_2021}. It is a finite quotient of the free monoid over $\cA_n$, and the corresponding \emph{stylic congruence} of $\cA_n^*$ is denoted by $\equiv_{\styl}$. It is $\cJ$-trivial \cite[Theorem~11.1]{abram_reutenauer_stylic_monoid_2021}, therefore, by Simon's Theorem, there exists $k \in \N$ such that $\styln \in \vJ_k$. 

The stylic monoid of rank $n$ can be defined in two other ways, which will be the ones used in this work: It is defined by the presentation $\pres*{\cA_n}{\mathcal{R}_{\styl}}$ \cite[Theorem~8.1]{abram_reutenauer_stylic_monoid_2021}, where
\begin{align*}
	\mathcal{R}_{\styl} =& \mathcal{R}_{\plac} \cup \{ (a^2,a) : a \in \cA_n \}.
\end{align*}
The defining relations are known as the \emph{stylic relations}, and are the plactic relations together with a generator idempotent relation. As such, the stylic monoid of rank $n$ can be viewed as a quotient of the plactic monoid \cite[Proposition~5.1]{abram_reutenauer_stylic_monoid_2021}, and two words in the same stylic class have the same support \cite[Lemma~5.3]{abram_reutenauer_stylic_monoid_2021}.

For the other definition, we need a combinatorial object analogue to a Young tableau: An \emph{$N$-tableau} is a Young tableau where each row is strictly increasing and contained in the row below \cite[Subection~6.1]{abram_reutenauer_stylic_monoid_2021}. An example of an $N$-tableau is
\begin{center}  
	$\tikz[tableau]\matrix{
		5 \& 6 \\
		2 \& 5 \& 6 \\
		1 \&2 \& 3 \& 4 \& 5 \& 6 \\
	};$ .
\end{center}

As with Young tableaux and Schensted's algorithm, it is possible to associate each word $w \in \cA_n^*$ to a unique $N$-tableau, which we denote by $N(w)$, by using the \emph{right $N$-algorithm}: Consider rows of an $N$-tableau as subsets of the alphabet. The \emph{right $N$-insertion} of a letter $a \in \cA_n$ into a row $\cB \subseteq \cA_n$ gives the row $\cB \cup \{a\}$. If $b$ is the smallest letter in $B$ strictly greater than $a$, we say $b$ is \emph{bumped} (but $b$ is not deleted in $\cB \cup \{a\}$). The \emph{right $N$-insertion} of a letter $a \in \cA_n$ into an $N$-tableau is recursively defined as follows: $a$ is inserted into the first row, then, if a letter $b$ is bumped, $b$ is inserted into the row above. The algorithm stops when no letter is bumped. Inserting a letter into an $N$-tableau, using this algorithm, produces an $N$-tableau \cite[Proposition~6.1]{abram_reutenauer_stylic_monoid_2021}. The \emph{right $N$-insertion} of a word $w \in \cA_n^*$ into an $N$-tableau is done by inserting the letters of $w$, one-by-one from left-to-right. The stylic congruence on $\cA_n^*$ is defined by
\[
u \equiv_\styl v \iff N(u) = N(v),
\]
for $u,v \in \cA_n^*$ \cite[Theorem~7.1]{abram_reutenauer_stylic_monoid_2021}.

The stylic monoid of rank $n$ has an absorbing element, which is the stylic class of the decreasing product of all letters in $\cA_n^*$ \cite[Proposition~5.4]{abram_reutenauer_stylic_monoid_2021}. 
This element corresponds to the $N$-tableau with $n$ rows and the letters $\{i,\dots,n\}$ in the $i$-th row.

The following definitions are introduced in \cite[Subsection~6.3]{abram_reutenauer_stylic_monoid_2021}: For each subset $\cB$ of $\cA_n$, and each letter $a \in \cA_n$, the element $a_{\cB}^{\uparrow} \in \cB \cup \{\varepsilon\}$ is the smallest letter in $\cB$ which is strictly greater than $a$, or $\varepsilon$ if such a letter does not exist. Define the mapping $\delta: \cA_n^* \to \cA_n^*$ as follows: for any word $w \in \cA_n^*$ and letter $a \in \cA_n$, $\delta(wa) = \delta(w) \cdot a_{\supp{w}}^{\uparrow}$. Notice that the smallest letter in $w$ is not in $\delta(w)$, hence $\supp{\delta^k (w)} \subsetneq \supp{\delta^{k-1} (w)}$, for all $k \in \N$ such that $\supp{\delta^{k-1} (w)} \neq \emptyset$.

We introduce the following definition, which expands upon the ``arrow'' notation: For a word $w \in \cA_n^*$, and $k \in \N$, define the mapping ${\uparrow}^k_w : \{1,\dots,|w|\} \rightarrow \supp{w}$ recursively, as follows: for $2 \leq l \leq k$,
\begin{align*}
   {{\uparrow}^1_w}{(i)} &= {(w_{i})}_{\supp{w_{\leq i}}}^{\uparrow},\\
   {{\uparrow}^l_w}{(i)} &= {\left({{\uparrow}^{l-1}_w}{(i)}\right)}_{\supp{\delta^{l-1}(w_{\leq i})}}^{\uparrow}.
\end{align*}
If ${{\uparrow}^{k}_w}{(i)} \neq \varepsilon$, then ${{\uparrow}^{k}_w}{(i)}$ is the letter which is bumped into the $(k+1)$-th row when $w_i$ is inserted into the $N$-tableau. As an example, consider the word $535234512345$. Then,
\[
\begin{array}{cccccccccccccl}
    5 & 3 & 5 & 2 & 3 & 4 & 5 & 1 & 2 & 3 & 4 & 5 & = & w,\\
      & 5 &   & 3 & 5 & 5 &   & 2 & 3 & 4 & 5 &   & = & \delta(w),\\
      &   &   & 5 &   &   &   & 3 & 5 & 5 &   &   & = & \delta^2(w),\\
      &   &   &   &   &   &   & 5 &   &   &   &   & = & \delta^3(w),
\end{array}
\]
and ${{\uparrow}^3_w}{(8)} = 5$, that is, the letter $w_8 = 1$ bumps $5$ to the third row of $N(535234512345)$.

The following lemmata are immediate consequences of the definition of ${\uparrow}^k_w$, and the right $N$-algorithm in the case of the first lemma:

\begin{lemma} \label{lemma:row_bump}
    Let $w \in \cA_n^*$, $a \in \cA_n$, and $k \in \N$. Then, $a$ occurs in the $k$-th row of $N(w)$ if and only if there exists an index $j \leq |w|$ such that ${{\uparrow}^{k-1}_w}{(j)} = a$.
\end{lemma}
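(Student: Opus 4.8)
The plan is to unwind the definition of ${\uparrow}^{k}_w$ against the right $N$-algorithm, leaning on the interpretation already recorded when the maps ${\uparrow}^{k}_w$ were introduced: for $k \geq 1$, if ${{\uparrow}^{k}_w}{(i)} \neq \varepsilon$ then it is exactly the letter bumped into the $(k+1)$-th row during the right $N$-insertion of $w_i$ into $N(w_{\leq i-1})$, and if ${{\uparrow}^{k}_w}{(i)} = \varepsilon$ then the bumping triggered by $w_i$ does not reach the $(k+1)$-th row. To state everything uniformly I would set ${{\uparrow}^{0}_w}{(i)} := w_i$; this is consistent with reading $\delta^{0}$ as the identity, so that the displayed recursion for ${\uparrow}^{l}_w$ also produces ${\uparrow}^{1}_w$, and it makes ${{\uparrow}^{k-1}_w}{(i)}$ meaningful for every $k \in \N$. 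I would also record the trivial observation that ${{\uparrow}^{k-1}_w}{(i)}$ depends only on the prefix $w_{\leq i}$ (the recursion refers only to $w_i$ and to the sets $\supp{\delta^{l}(w_{\leq i})}$), so that ${{\uparrow}^{k-1}_w}{(i)} = {{\uparrow}^{k-1}_{w_{\leq i}}}{(i)}$.

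The driving observation is that right $N$-insertion is monotone on rows: inserting a letter $a$ into a row $\cB$ yields $\cB \cup \{a\}$, so no entry is ever deleted. Building $N(w)$ from the empty $N$-tableau by inserting $w_1, \dots, w_{|w|}$ in order, it follows that the $k$-th row of $N(w)$ is exactly the set of letters that at some step are placed into the $k$-th row; equivalently, $a$ occurs in the $k$-th row of $N(w)$ if and only if there is an index $j \leq |w|$ such that, when $w_j$ is inserted into $N(w_{\leq j-1})$, the letter placed into the $k$-th row is $a$.

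It then remains to express this condition in the arrow notation. When $k = 1$ the letter placed into the first row at step $j$ is $w_j$ itself, so $a$ occurs in the first row of $N(w)$ if and only if $a = w_j = {{\uparrow}^{0}_w}{(j)}$ for some $j$ (which is just the fact that the first row of $N(w)$ is $\supp{w}$). When $k \geq 2$, by the interpretation recalled above the letter placed into the $k$-th row when $w_j$ is inserted into $N(w_{\leq j-1})$ is ${{\uparrow}^{k-1}_{w_{\leq j}}}{(j)}$ whenever this is not $\varepsilon$, and no letter is placed into the $k$-th row at step $j$ when it is $\varepsilon$; since ${{\uparrow}^{k-1}_{w_{\leq j}}}{(j)} = {{\uparrow}^{k-1}_w}{(j)}$, combining the two cases yields precisely the stated equivalence.

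I expect no serious obstacle here: once the interpretation of ${\uparrow}^{k}_w$ as tracing the bumping path of a letter is in hand, the lemma is pure bookkeeping, the only points needing a little care being the degenerate row ($k = 1$) and the $\varepsilon$-case. The one genuinely inductive ingredient is that interpretation itself, which is verified by induction on $k$, matching one step of the recursion for ${\uparrow}^{l}_w$ with one step of the right $N$-algorithm; its crux is that the content of the $l$-th row, at the moment the cascade from $w_i$ reaches it, equals $\supp{\delta^{l-1}(w_{\leq i})}$ — using that passing from $u$ to $\delta(u)$ amounts to deleting the bottom row of $N(u)$ (cf.\ \cite{abram_reutenauer_stylic_monoid_2021}) — together with the fact that $(a)^{\uparrow}_{\cB}$ is unchanged on adjoining $a$ to $\cB$. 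Since the statement is flagged as an immediate consequence of that interpretation, I would keep this verification brief.
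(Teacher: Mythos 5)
Your proof is correct and follows essentially the same route as the paper's: both arguments come down to identifying the $k$-th row of $N(w)$ with $\supp{\delta^{k-1}(w)}$ (via the cited fact from Abram--Reutenauer that $\delta$ deletes the bottom row, applied repeatedly) and then observing that the letters of $\delta^{k-1}(w)$ are exactly the non-empty values ${{\uparrow}^{k-1}_w}{(j)}$, your version merely phrasing this operationally through the bumping cascade. Your explicit convention ${{\uparrow}^{0}_w}{(i)} = w_i$ for the degenerate case $k=1$ is a detail the paper leaves implicit.
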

\begin{proof}
By repeated application of \cite[Lemma~6.3]{abram_reutenauer_stylic_monoid_2021}, $\supp{\delta^{k-1}(w)}$ is the $k$-th row of $N(w)$, viewed as a subset of $\cA_n$. Moreover, by the definition of ${\uparrow}^{k-1}_w$, for $a \in \cA_n$, we have that $a \in \supp{\delta^{k-1}(w)}$ if and only if ${{\uparrow}^{k-1}_w}{(j)} = a$ for some $j \leq |w|$. Thus, $a$ is in the $k$-th row of $N(w)$ if and only if there is some $j$ satisfying the previously mentioned condition.
\end{proof}

\begin{lemma} \label{lemma:up_arrow_to_decreasing_subsequence}
Let $w \in \cA_n^*$ and let $k, s_k \in \N$ be such that $1 \leq k \leq s_k \leq |w|$. If ${{\uparrow}^{k-1}_w}{(s_k)} = a$, for some $a \in \cA_n$, then there exists a strictly decreasing subsequence $w_{s_1},\dots,w_{s_k}$ of $w$ such that $w_{s_1} = a$ and ${{\uparrow}^{l-1}_w}{(s_l)} = a$ for $1 < l \leq k$.
\end{lemma}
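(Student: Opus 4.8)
The plan is to prove the statement by induction on $k$, peeling off the top level of the recursion defining ${\uparrow}^{k}_w$. Throughout I adopt the natural convention ${\uparrow}^{0}_w(i) = w_i$ (consistent with $\delta^0 = \mathrm{id}$), so that the recursion ${\uparrow}^{l}_w(i) = ({\uparrow}^{l-1}_w(i))^{\uparrow}_{\supp{\delta^{l-1}(w_{\le i})}}$ holds for every $l \ge 1$ and the hypothesis is meaningful already at $k = 1$; note also that ${\uparrow}^{m}_w(i)$ depends only on the prefix $w_{\le i}$.

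I would first record two elementary facts. First, for each $m \ge 0$ one has $\delta^{m}(w_{\le i}) = \delta^{m}(w_{\le i-1}) \cdot {\uparrow}^{m}_w(i)$: this follows by induction on $m$, since adjoining ${\uparrow}^{m-1}_w(i)$ to $\supp{\delta^{m-1}(w_{\le i-1})}$ does not change the value of $(\cdot)^{\uparrow}$ at the argument ${\uparrow}^{m-1}_w(i)$. Hence $\supp{\delta^{m}(w_{\le i})} = \{{\uparrow}^{m}_w(i') : i' \le i,\ {\uparrow}^{m}_w(i') \ne \varepsilon\}$, which refines Lemma~\ref{lemma:row_bump} and supplies its ``prefix'' version. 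Second, the sequence ${\uparrow}^{0}_w(i) < {\uparrow}^{1}_w(i) < \cdots$ is strictly increasing until it first takes the value $\varepsilon$, after which it is constantly $\varepsilon$; thus if ${\uparrow}^{k-1}_w(s_k) = a \ne \varepsilon$ then ${\uparrow}^{m}_w(s_k)$ is a letter strictly less than $a$ for every $m < k-1$, and in particular $w_{s_k} < a$.

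The base case $k = 1$ is immediate: the hypothesis reads $w_{s_1} = a$, and the length-one subsequence $w_{s_1}$ works. For the inductive step, set $b := {\uparrow}^{k-2}_w(s_k)$, which is a letter with $b < a$; unfolding the recursion once gives $a = b^{\uparrow}_{\supp{\delta^{k-2}(w_{\le s_k})}}$, so $a \in \supp{\delta^{k-2}(w_{\le s_k})}$, and since $\supp{\delta^{k-2}(w_{\le s_k})} = \supp{\delta^{k-2}(w_{\le s_k-1})} \cup \{b\}$ with $b$ irrelevant to $b^{\uparrow}$, even $a \in \supp{\delta^{k-2}(w_{\le s_k-1})}$. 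By the first fact there is an index $i < s_k$ with ${\uparrow}^{k-2}_w(i) = a$; let $s_{k-1}$ be the largest such index. As ${\uparrow}^{k-2}_w(s_{k-1}) = a \ne \varepsilon$, Lemma~\ref{lemma:row_bump} applied to $w_{\le s_{k-1}}$ puts $a$ in the $(k-1)$-st row of $N(w_{\le s_{k-1}})$, so that tableau has at least $k-1$ rows; since the rows of an $N$-tableau form a properly decreasing chain of subsets of $\cA_n$, this forces $k-1 \le |\supp{w_{\le s_{k-1}}}| \le s_{k-1}$. Now I would apply the induction hypothesis to the position $s_{k-1}$ with parameter $k-1$ (legitimate since $k-1 \le s_{k-1} < s_k \le |w|$ and ${\uparrow}^{k-2}_w(s_{k-1}) = a$): it yields $s_1 < \cdots < s_{k-1}$ with $w_{s_1} > \cdots > w_{s_{k-1}}$, $w_{s_1} = a$, and ${\uparrow}^{l-1}_w(s_l) = a$ for $1 < l \le k-1$. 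Appending $s_k$ and recalling ${\uparrow}^{k-1}_w(s_k) = a$, all the assertions follow provided $w_{s_{k-1}} > w_{s_k}$ (for $k = 2$ this is automatic, since there $w_{s_{k-1}} = w_{s_1} = a > b = w_{s_k}$).

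The genuine obstacle is therefore exactly the inequality $w_{s_{k-1}} > w_{s_k}$ when $k \ge 3$. Heuristically: during the $N$-insertion of $w_{\le s_k}$ the letter $a$ sits in row $k-1$, having last been bumped into it — at step $s_{k-1}$ — as a by-product of inserting $w_{s_{k-1}}$, and the bump chain triggered at step $s_k$ by the strictly smaller letter $w_{s_k}$ must push precisely that occurrence of $a$ up into row $k$. Making this rigorous is a non-crossing statement for the bumping routes of the right $N$-algorithm: tracking the columns occupied by the successive bumped cells, one shows by a short induction on the rows of the chain, using only the definition of right $N$-insertion, that a later insertion whose initial letter is weakly smaller proceeds along columns weakly to the left, so its chain cannot reach and bump that copy of $a$ in row $k-1$ unless its initial letter is strictly below $w_{s_{k-1}}$. (Equivalently, instead of recursing at $s_{k-1}$ one may apply the induction hypothesis directly to $(k-1,s_k)$ to obtain a chain tracking $b$, then shift it up one level and prepend a position; the same obstruction then resurfaces as the claim that ``$a$ is the successor of $b$'' propagates down the rows along the shifted chain.) This bumping-route step is where the real content lies; the remainder is bookkeeping with the recursion.
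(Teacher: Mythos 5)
Your overall architecture (induction on $k$, peeling one level of the recursion, plus the two preliminary facts about $\delta^{m}(w_{\le i})$ and the strict growth of $m \mapsto {\uparrow}^{m}_w(i)$) is sound and essentially reorganizes what the paper does non-inductively: the paper first finds indices $s_1,\dots,s_{k-1}$ with ${{\uparrow}^{l-1}_w}{(s_l)}=a$, then argues they can be taken increasing in position. But you have correctly located, and then not proved, the crux of the lemma: the strict decrease of the letters along the chain, i.e.\ $w_{s_{k-1}} > w_{s_k}$ (in the paper's phrasing, that among the indices $i \le s_l$ with ${{\uparrow}^{l-2}_w}{(i)} = a$ one can always find one with $w_i > w_{s_l}$). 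Your treatment of this point is an explicit heuristic (``a non-crossing statement for the bumping routes \dots one shows by a short induction on the rows of the chain''), which is a plan rather than an argument; as you yourself say, ``this bumping-route step is where the real content lies.'' That is precisely the content the paper supplies: it supposes every candidate $i$ has $w_i \le w_{s_l}$, picks the one whose arrow sequence agrees with (i.e.\ stays $\le$) that of $s_l$ for the longest initial stretch, locates the first level $j$ where ${{\uparrow}^{j}_w}{(i)} \le {{\uparrow}^{j}_w}{(s_l)}$ but ${{\uparrow}^{j+1}_w}{(i)} > {{\uparrow}^{j+1}_w}{(s_l)}$, and derives a contradiction with the minimality of $j$ from the fact that some occurrence of ${{\uparrow}^{j+1}_w}{(s_l)}$ in $\delta^{j}(w_{\le s_l})$ must itself bump $a$. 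Without an argument of this kind your proof is incomplete.

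A secondary concern: even granting a non-crossing principle, your specific choice of $s_{k-1}$ as the \emph{largest} index $i < s_k$ with ${\uparrow}^{k-2}_w(i) = a$ is not obviously the right one --- nothing you state guarantees that this particular occurrence carries a letter larger than $w_{s_k}$. The paper avoids committing to a canonical choice and instead proves bare existence of an index with both properties, which is what the induction actually needs. I would either prove the first-crossing contradiction as in the paper and then select $s_{k-1}$ accordingly, or reformulate your bumping-route claim so that it identifies which occurrence of $a$ in row $k-1$ gets bumped at step $s_k$ and shows that occurrence was deposited by a strictly larger letter.
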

\begin{proof}
Since $\supp{\delta^l (w)} \subsetneq \supp{\delta^{l-1} (w)}$, for all $l < k$, then ${{\uparrow}^{k-1}_w}{(s_k)} = a$ implies that $a \in \supp{\delta^l(w)}$, for all $1 \leq l \leq k-1$, and $a \in \supp{w}$. Thus, there exist $1 \leq s_1, \dots, s_{k-1} \leq |w|$ such that $w_{s_1} = a$ and ${{\uparrow}^{l-1}_w}{(s_l)} = a$ for $1 < l \leq k-1$. 

\par Notice that ${{\uparrow}^{l-1}_w}{(s_l)} = a$ implies that there is a letter $a$ to the left of ${{\uparrow}^{l-2}_w}{(s_l)}$ in $\delta^{l-2}(w)$, for all $2 < l \leq k$. Similarly, ${{\uparrow}^{1}_w}{(s_2)} = a$ implies that there is a letter $a$ to the left of $w_{s_2}$ in $w$. As such, we can restrict the choice of $s_1, \dots, s_{k-1}$ to have $s_1 < \dots < s_k$.

Furthermore, notice that, since ${{\uparrow}^{l-1}_w}{(s_l)} = a$, there must exist $i \leq s_l$ such that ${{\uparrow}^{l-2}_w}{(i)} = a$ and $w_i > w_{s_l}$: In order to obtain a contradiction, take $i$ such that $w_i \leq w_{s_l}$,
\[
{{\uparrow}^{j}_w}{(i)} \leq {{\uparrow}^{j}_w}{(s_l)} <  {{\uparrow}^{j+1}_w}{(s_l)} < {{\uparrow}^{j+1}_w}{(i)},
\]
and ${{\uparrow}^{j'}_w}{(i)} \leq {{\uparrow}^{j'}_w}{(s_l)}$ for all $1 \leq j' \leq j$, such that $j$ is minimal. In other words, when comparing the sequences of ``arrows'' of $i$ and $s_l$, this choice of $i$ gives us the sequence where there are the least number of elements which are less than or equal to the corresponding elements of the sequence of $s_l$, i.e. 
\[
    \begin{array}{ccc}
        w_i & \leq & w_{s_l} \\
        {{\uparrow}^{1}_w}{(i)} & \leq & {{\uparrow}^{1}_w}{(s_l)} \\
        \vdots & & \vdots \\
        {{\uparrow}^{j}_w}{(i)} & \leq & {{\uparrow}^{j}_w}{(s_l)} \\
        {{\uparrow}^{j+1}_w}{(i)} & > & {{\uparrow}^{j+1}_w}{(s_l)} \\
        \vdots & & \vdots \\
        {{\uparrow}^{l-2}_w}{(i)} & > & {{\uparrow}^{l-2}_w}{(s_l)} \\
         & & {{\uparrow}^{l-1}_w}{(s_l)}
    \end{array}
\]
Then, we have that all occurrences of ${{\uparrow}^{j+1}_w}{(s_l)}$ must be to the right of ${{\uparrow}^{j}_w}{(i)}$ in $\delta^j(w_{\leq s_l})$, since ${{\uparrow}^{j}_w}{(i)}$ bumps ${{\uparrow}^{j+1}_w}{(i)}$ and not ${{\uparrow}^{j+1}_w}{(s_l)}$. But at least one occurrence of ${{\uparrow}^{j+1}_w}{(s_l)}$ in $\delta^j(w_{\leq s_l})$ will bump $a$ to the $(l-2)$-th row. This contradicts the minimality of $j$, hence, we can choose $s_1, \dots, s_{l}$ such that $s_1 < \dots < s_k$ and $w_{s_1} > \dots > w_{s_l}$. 

Thus, we have found a strictly decreasing subsequence $w_{s_1},\dots,w_{s_k}$ of $w$, where $w_{s_1} = a$ and ${{\uparrow}^{l-1}_w}{(s_l)} = w_{s_1}$ for all $1 < l \leq k$.
\end{proof}


\section{Tropical Representations of the Stylic Monoid}
\label{section:tropical_representation_of_the_stylic_monoid}

We first construct a faithful representation of the stylic monoid of finite rank $n$ on the monoid of upper unitriangular $(n+1) \times (n+1)$ tropical matrices, for each $n \in \N$. Since, by \cite[Corollary~3.3]{johnson_fenner_unitriangular}, this monoid generates the variety with equational theory $J_n$, we show that $\styln$ satisfies all identities in $J_n$. 

Let $\ol{x} := n+1 - x$ for all $x \in \cA_n$. We define the map $\rho_n \colon \cA_n^* \rightarrow U_{n+1}(\T)$ as follows:
\[ \rho_n(x)_{i,j}  = 
\begin{cases}
0 &\text{if } i = j; \\
1 &\text{if } i \leq \ol{x} < j; \\
-\infty &\text{otherwise}.
\end{cases}
\]
for each $x \in \cA_n$, extending multiplicatively to all of $\cA_n^*$ and defining $\rho_n(\varepsilon) = I_{(n+1) \times (n+1)}$. For example, the images of $2$ and of $4213$ under $\rho_4$ are, respectively,
\[
    \begin{bmatrix}
        0 & -\infty & -\infty & 1 & 1 \\
        -\infty & 0 & -\infty & 1 & 1 \\
        -\infty & -\infty & 0 & 1 & 1 \\
        -\infty & -\infty & -\infty & 0 & -\infty \\
        -\infty & -\infty & -\infty & -\infty & 0
    \end{bmatrix}
    \quad \text{and} \quad
    \begin{bmatrix}
        0 & 1 & 2 & 2 & 3 \\
        -\infty & 0 & 1 & 1 & 2 \\
        -\infty & -\infty & 0 & 1 & 2 \\
        -\infty & -\infty & -\infty & 0 & 1 \\
        -\infty & -\infty & -\infty & -\infty & 0
    \end{bmatrix}
\]


Notice that, for each $x \in \cA_n$, its image under $\rho_n$ is a unitriangular tropical matrix where the only entries above the diagonal different from $-\infty$ are equal to $1$.

\begin{lemma} \label{lemma:maximum_length_strictly_decreasing}
Let $w \in \cA_n^*$. For $1 \leq i < j \leq n+1$ and $k \in \N$, we have that $\rho_n(w)_{i,j} = k$ if and only if $k$ is the maximum length of any strictly decreasing subsequence of $w$ only using letters between $\ol{j}+1$ and $\ol{i}$. On the other hand, $\rho_n(w)_{i,j} = -\infty$ if and only if $w$ does not contain $a$ for any $i \leq \ol{a} < j$. 
\end{lemma}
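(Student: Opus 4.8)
The plan is to prove the statement by induction on the length of $w$, tracking how matrix multiplication by a single generator $\rho_n(x)$ updates the entries. The key observation is that $\rho_n(x)_{i,j}$ equals $1$ exactly when the interval $[i,j)$ straddles the threshold $\ol{x}$, i.e.\ $i \leq \ol{x} < j$; this is precisely the condition that a new letter $x$ can extend a strictly decreasing subsequence living in the letter-range $(\ol{j}, \ol{i}]$, since appending $x$ on the right requires $x$ to be smaller than everything before it, and $\ol{x} \geq i$ means $x \leq \ol{i}$ while $\ol{x} < j$ means $x > \ol{j}$. So for the inductive step, writing $w = w'x$, the tropical product gives
\[
\rho_n(w)_{i,j} \;=\; \max_{i \leq \ell \leq j} \bigl( \rho_n(w')_{i,\ell} + \rho_n(x)_{\ell,j} \bigr),
\]
and one analyses which terms $\ell$ can contribute: $\ell = j$ gives $\rho_n(w')_{i,j}$ (decreasing subsequence of $w'$ not using the final $x$), while $\ell$ with $i \leq \ell \leq \ol{x} < j$ gives $\rho_n(w')_{i,\ell} + 1$ (a decreasing subsequence of $w'$ in the range $(\ol{\ell}, \ol{i}]$, extended by $x$, which is legal precisely because $x \leq \ol{\ell}$). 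Taking the maximum over all such $\ell$ recovers the maximum-length decreasing subsequence of $w$ in the range $(\ol{j}, \ol{i}]$, using the inductive hypothesis. One should be slightly careful that $\ell = i$ contributes $\rho_n(x)_{i,j}$ which is $0$ if $i=j$ (excluded, since $i<j$) and is $1$ or $-\infty$ otherwise, corresponding to the length-one subsequence consisting of $x$ alone when $x$ lies in range.

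For the second sentence of the statement: $\rho_n(w)_{i,j} = -\infty$ iff there is no strictly decreasing subsequence at all in the range $(\ol{j}, \ol{i}]$, iff $w$ contains no letter in that range, iff $w$ contains no letter $a$ with $i \leq \ol{a} < j$ (the translation $a \in (\ol{j}, \ol{i}]$ is equivalent to $\ol{j} < \ol{a} \leq \ol{i}$, equivalently $i \leq \ol{a} < j$). This is essentially the base case plus a degenerate instance of the inductive step, since as soon as one letter in range appears there is a decreasing subsequence of length $\geq 1$, and all entries of $\rho_n(w)$ are manifestly in $\N \cup \{-\infty\}$ because each generator has entries in $\{0,1,-\infty\}$ and tropical multiplication adds.

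I expect the main obstacle to be purely bookkeeping: carefully verifying that the $\max$ over the splitting index $\ell$ in the tropical product, combined with the inductive hypothesis, neither overcounts (a subsequence of $w'x$ in range $(\ol{j},\ol{i}]$ either avoids the final $x$, giving the $\ell=j$ term, or ends in that $x$, and then deleting $x$ leaves a decreasing subsequence of $w'$ whose smallest letter exceeds $\ol{\ell}$ for the appropriate $\ell = \ol{(\text{that smallest letter})}$ or smaller, all of which appear as terms with $\ell \leq \ol{x}$) nor undercounts (conversely, any term $\rho_n(w')_{i,\ell}+1$ with $i\le\ell\le\ol x<j$ genuinely yields a valid decreasing subsequence of $w$ in range $(\ol j,\ol i]$, since its letters lie in $(\ol\ell,\ol i]\subseteq(\ol j,\ol i]$ and $x\le\ol\ell$ is strictly below all of them). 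Making the index translation $\ell \leftrightarrow \ol{\ell}$ transparent and handling the boundary indices $\ell = i$ and $\ell = j$ cleanly is the only delicate point; everything else follows from unwinding definitions.
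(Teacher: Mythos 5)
Your proposal is correct. The only real difference from the paper is organizational: you induct on $|w|$, peeling off the final letter and analysing which splitting indices $\ell$ in the one-step tropical product $\rho_n(w'x)_{i,j} = \max_{i \leq \ell \leq j}\bigl(\rho_n(w')_{i,\ell} + \rho_n(x)_{\ell,j}\bigr)$ can contribute, whereas the paper expands the full product in one go: an entry $\rho_n(w)_{i,j} = k$ forces a chain $i = t_0 < t_1 < \cdots < t_k = j$ and positions $s_1 < \cdots < s_k$ with $\rho_n(w_{s_m})_{t_{m-1},t_m} = 1$, i.e.\ $t_{m-1} \leq \ol{w_{s_m}} < t_m$, which immediately reads off as a strictly decreasing subsequence in the range $(\ol{j},\ol{i}]$; the converse direction just builds the chain from a given subsequence by setting $t_m = \ol{w_{s_{m+1}}}$. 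Both arguments rest on the same mechanism (max-plus products of $\{0,1,-\infty\}$-matrices compute longest paths), but the paper's global version dispenses with the induction and with the boundary bookkeeping at $\ell = i$ and $\ell = j$ that you correctly flag as the delicate point of your route; your version in exchange makes the ``extend a decreasing subsequence by one letter'' step more explicit. Your treatment of the $-\infty$ case matches the paper's.
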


A remark about abuse of language: we say ``only using letters between $\ol{j}+1$ and $\ol{i}$'' in order to avoid the formal, but more cumbersome, statement ``only using letters $a \in \cA_n$ such that $\ol{j}+1 \leq a \leq \ol{i}$''.

\begin{proof}

Let $w \in \cA_n^*$ and $1 \leq i < j \leq n+1$. Suppose $\rho_n(w)_{i,j} = k$, for some $k \in \N$. Then, by the definition of tropical matrix multiplication, $w$ admits a subsequence $w_{s_1}, \dots, w_{s_k}$, of length $k$, and there exist $i = t_0 < \cdots < t_k = j$ such that $\rho_n(w_{s_i})_{t_{i-1},t_i} = 1$ for all $1 \leq i \leq k$. Furthermore, by the definition of $\rho_n$, $t_{i-1} \leq \ol{w_{s_i}} < t_i$. Therefore, $w_{s_1},\dots,w_{s_k}$ is a strictly decreasing subsequence of $w$ such that $\ol{i} \geq w_{s_1} > \dots > w_{s_k} \geq \ol{j}+1$ and hence, the maximum length of a strictly decreasing subsequence of $w$ only using letters between $\ol{j}+1$ and $\ol{i}$ is greater than or equal to $\rho_n(w)_{i,j}$.
\par Suppose now that $k$ is the maximum length of any strictly decreasing subsequence of $w$ only using letters between $\ol{j}+1$ and $\ol{i}$. Let $w_{s_1},\dots,w_{s_k}$ be a strictly decreasing subsequence of $w$ such that $\ol{i} \geq w_{s_1} > \dots > w_{s_k} \geq \ol{j}+1$, then let $t_0 = i, t_k = j$ and $t_i = \ol{w_{s_{i+1}}}$ for $ 1 \leq i < k$. Hence, by the definition of $\rho_n$, we have that $\rho_n(w_{s_i})_{t_{i-1},t_{i}} = 1$ for $ 1 \leq i \leq k$, and therefore
\[\rho_n(w)_{i,j} \geq \prod_{i=1}^{k} \rho_n(w_{s_i})_{t_{i-1},t_{i}} = k. \]
Thus, $\rho_n(w)_{i,j}$ is greater than or equal to the maximum length of a strictly decreasing subsequence only using letters between $\ol{j}+1$ and $\ol{i}$. Equality follows.
\par In the case where $\rho_n(w)_{i,j} = -\infty$, there is no $t \in \{1, \dots, |w|\}$ such that $i \leq \ol{w_t} < j$, otherwise, $w_t$ would form a strictly decreasing subsequence of $w$ (with just one letter), only using letters between $\ol{j}+1$ and $\ol{i}$, which would imply that $\rho_n(w)_{i,j} \geq 1$. Similarly, if $\ol{w_t} < i$ or $\ol{w_t} \geq j$ for all $1 \leq t \leq |w|$, then $\rho_n(w_t)_{i,j'} = -\infty$ for all $i < j' \leq j$ and hence $\rho_n(w)_{i,j} = -\infty$.
\end{proof}

As an immediate corollary, notice that $\rho_n(w)_{i,j} \leq n$, for all $1 \leq i,j \leq n+1$. We also have the following:

\begin{corollary}
\label{corollary:difference_of_one_in_finite_adjacent_entries_of_matrix}
Let $w \in \cA_n^*$. Then, any two finite adjacent entries in $\rho_n(w)$ must differ by at most $1$, and are weakly increasing on columns and weakly decreasing on rows. 
In other words, for $1 \leq i \leq j \leq n+1$, if $\rho_n(w)_{i,j}$ and $\rho_n(w)_{i+1,j}$ are both finite, then $\rho_n(w)_{i+1,j} \leq \rho_n(w)_{i,j} \leq \rho_n(w)_{i+1,j} + 1$. Similarly, if $\rho_n(w)_{i,j}$ and $\rho_n(w)_{i,j+1}$ are both finite, then $\rho_n(w)_{i,j} \leq \rho_n(w)_{i,j+1} \leq \rho_n(w)_{i,j} + 1$.
\end{corollary}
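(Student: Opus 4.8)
The plan is to read both chains of inequalities straight off the combinatorial description of the entries of $\rho_n(w)$ supplied by Lemma~\ref{lemma:maximum_length_strictly_decreasing}. Recall that, for $1 \leq i < j \leq n+1$, a finite entry $\rho_n(w)_{i,j}$ is the maximum length of a strictly decreasing subsequence of $w$ using only letters in the interval $[\ol{j}+1,\ol{i}] := \{a \in \cA_n : \ol{j}+1 \leq a \leq \ol{i}\}$, and it is $-\infty$ exactly when $w$ contains no letter of that interval; the diagonal entries are $0$ and the entries below the diagonal are $-\infty$, since $\rho_n(w) \in U_{n+1}(\T)$. I would first clear away the degenerate cases: in the vertical chain, if $j = i$ then $\rho_n(w)_{i+1,j} = -\infty$, so the hypothesis fails; and whenever one of the two entries of either chain lies on the diagonal (that is, $j = i+1$ in the vertical chain or $j = i$ in the horizontal one), that entry is $0$ while the other has the form $\rho_n(w)_{m,m+1}$, which --- when finite --- is the longest strictly decreasing subsequence drawn from the singleton letter-set $\{\ol{m}\}$ and hence equals $1$, so both chains read $0 \leq 1 \leq 1$.

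For the remaining, generic cases I would compare admissible letter-sets. In the vertical chain with $i+1 < j$, row $i$ admits the letters $[\ol{j}+1,\ol{i}]$ while row $i+1$ admits $[\ol{j}+1,\overline{i+1}] = [\ol{j}+1,\ol{i}-1]$, i.e.\ the very same set with its largest letter $\ol{i}$ deleted. Thus every strictly decreasing subsequence counted by the $(i+1,j)$ entry is counted by the $(i,j)$ entry, giving $\rho_n(w)_{i+1,j} \leq \rho_n(w)_{i,j}$; conversely, from a maximal strictly decreasing subsequence for the $(i,j)$ entry one may delete its at most one occurrence of the value $\ol{i}$ to obtain a strictly decreasing subsequence, of length at least $\rho_n(w)_{i,j}-1$, using only letters of $[\ol{j}+1,\ol{i}-1]$, so $\rho_n(w)_{i+1,j} \geq \rho_n(w)_{i,j}-1$. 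Together these give $\rho_n(w)_{i+1,j} \leq \rho_n(w)_{i,j} \leq \rho_n(w)_{i+1,j}+1$. The horizontal chain is entirely parallel: for $i < j \leq n$ the $(i,j)$ entry admits $[\ol{j}+1,\ol{i}]$ and the $(i,j+1)$ entry admits $[\overline{j+1}+1,\ol{i}] = [\ol{j},\ol{i}]$, the same set with its smallest letter $\ol{j}$ adjoined, so $\rho_n(w)_{i,j} \leq \rho_n(w)_{i,j+1}$, and deleting from a maximal subsequence for the $(i,j+1)$ entry its at most one occurrence of $\ol{j}$ yields $\rho_n(w)_{i,j} \geq \rho_n(w)_{i,j+1}-1$, hence $\rho_n(w)_{i,j} \leq \rho_n(w)_{i,j+1} \leq \rho_n(w)_{i,j}+1$. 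The asserted monotonicity along rows and columns is exactly the left-hand inequality of each chain.

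I do not anticipate a real obstacle: once Lemma~\ref{lemma:maximum_length_strictly_decreasing} is available, the whole proof rests on the single observation that moving to a vertically or horizontally adjacent entry changes the set of admissible letters by exactly one extremal element, combined with the fact that a strictly decreasing subsequence contains each letter-value at most once. The only points that need care are keeping the interval endpoints straight under the involution $x \mapsto \ol{x}$ (so that $\overline{i+1} = \ol{i}-1$ and $\overline{j+1}+1 = \ol{j}$) and separating off the degenerate cases in which one of the two entries is a diagonal $0$ or a below-diagonal $-\infty$, since Lemma~\ref{lemma:maximum_length_strictly_decreasing} is stated only for $i < j$.
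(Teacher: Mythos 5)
Your proof is correct and follows essentially the same route as the paper: both read the entries off Lemma~\ref{lemma:maximum_length_strictly_decreasing}, obtain the monotonicity from containment of the admissible letter intervals, and obtain the gap of at most $1$ by removing the single extremal letter-value from a maximal strictly decreasing subsequence (the paper phrases this last step as a contradiction, you phrase it directly, but it is the same idea). Your explicit treatment of the diagonal and below-diagonal degenerate cases is a small addition of care that the paper leaves implicit.
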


\begin{proof}
\par First, by noticing that any strictly decreasing subsequence only using letters between $\ol{j}+1$ and $\ol{i}$ is also a strictly decreasing subsequence only using letters between $\ol{j}$ and $\ol{i}$, and $\ol{j}+1$ and $\ol{i}+1$, we have that the the entries of $\rho_n(w)$ weakly increase left-to-right on the columns and weakly decrease top-to-bottom on the rows.
\par Suppose, in order to obtain a contradiction, that there exist $1 \leq i \leq j \leq n$ and $0 \leq k < k' \leq n$ such that $\rho_n(w)_{i,j} = k$ and $\rho_n(w)_{i,j+1} = k'+1$. By the previous lemma, there are maximum length strictly decreasing subsequences $u$ and $v$ of $w$, of length $k$ and $k'+1$ and only using letters between $\ol{j}+1$ and $\ol{i}$ and between $\ol{j}$ and $\ol{i}$, respectively. Taking $v$ and discarding its smallest letter gives us a strictly decreasing subsequence of $w$, of length $k'$, only using letters between $\ol{j}+1$ and $\ol{i}$, which contradicts the maximality of the length of $u$. Similarly, we can prove that there are no $2 \leq i \leq j \leq n+1$ and $1 \leq k < k' \leq n$ such that $\rho_n(w)_{i,j} = k$ and $\rho_n(w)_{i-1,j} = k'+1$. 
\end{proof}

\begin{proposition}
\label{proposition:well_defined_morphism}
The map $\rho_n$ induces a well-defined morphism from $\styln$ to $U_{n+1}(\T)$.
\end{proposition}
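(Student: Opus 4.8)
The plan is to check that $\rho_n$ is constant on each pair in the set of defining relations $\mathcal{R}_\styl = \mathcal{R}_\plac \cup \{(a^2,a) : a \in \cA_n\}$ of $\styln$. Since $\rho_n$ is a morphism out of the free monoid $\cA_n^*$ and $\equiv_\styl$ is the congruence generated by $\mathcal{R}_\styl$, this suffices for $\rho_n$ to factor through $\styln$. That the image lies in $U_{n+1}(\T)$ is immediate: each generator is sent to an upper unitriangular matrix, and these form a submonoid of $M_{n+1}(\T)$. Throughout I will use Lemma~\ref{lemma:maximum_length_strictly_decreasing} to read each matrix entry combinatorially: for $1 \leq i < j \leq n+1$, the entry $\rho_n(w)_{i,j}$ equals the maximum length of a strictly decreasing subsequence of $w$ using only letters between $\ol{j}+1$ and $\ol{i}$, and it is $-\infty$ exactly when $w$ contains no letter in that interval; the diagonal and sub-diagonal entries are $0$ and $-\infty$ for every word, so they require no attention, and as $(i,j)$ ranges over $1 \leq i < j \leq n+1$ the intervals $[\ol{j}+1,\ol{i}]$ run through all sub-intervals of $\cA_n$.

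For the idempotent relations, note that in both $a^2$ and $a$, and for any interval of letters, the strictly decreasing subsequences using only letters from that interval are $\varepsilon$ together with the one-letter sequence $a$ precisely when $a$ lies in the interval (a repeated $a$ is never strictly decreasing). Hence the maximum lengths agree for every interval, and the two words contain the same letters, so Lemma~\ref{lemma:maximum_length_strictly_decreasing} gives $\rho_n(a^2) = \rho_n(a)$.

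For the Knuth relations, observe first that each relating pair consists of words that are permutations of one another, so the ``$-\infty$'' condition in Lemma~\ref{lemma:maximum_length_strictly_decreasing} is automatically matched; it then remains to compare, for every interval $[p,q] \subseteq \cA_n$, the maximum length of a strictly decreasing subsequence using only letters in $[p,q]$. For $acb \approx cab$ with $a \leq b < c$: neither word has a strictly decreasing subsequence of length $3$ (since $a \leq b$), the only length-$2$ such subsequence of $acb$ is $(c,b)$, and those of $cab$ are $(c,a)$ and $(c,b)$; since $a \leq b$, the subsequence $(c,b)$ lies in $[p,q]$ whenever $(c,a)$ does, so both words admit a length-$2$ decreasing subsequence inside $[p,q]$ under exactly the same condition (namely $p \leq b$ and $c \leq q$), while the length-$\leq 1$ cases coincide because the two words share the same letters. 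The relation $bac \approx bca$ with $a < b \leq c$ is handled symmetrically, using that $(b,a)$ lies in $[p,q]$ whenever $(c,a)$ does because $b \leq c$. This yields $\rho_n(acb) = \rho_n(cab)$ and $\rho_n(bac) = \rho_n(bca)$, so $\rho_n$ is constant on $\mathcal{R}_\styl$ and induces a well-defined morphism $\styln \to U_{n+1}(\T)$.

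I expect the Knuth-relation step to be the only part needing care, the subtlety being the boundary behaviour when letters coincide (e.g. $a = b$ or $b = c$) or when some of $a,b,c$ equal an endpoint $p$ or $q$; the device that sidesteps this is the observation above that on both sides of each Knuth move the family of strictly decreasing subsequences is the same up to replacements that never alter the maximal length attainable within any window of values. (One could alternatively derive the Knuth case from the classical facts that restriction to an interval of values preserves plactic equivalence and that plactic-equivalent words have equal longest strictly decreasing subsequences, but the direct case analysis above is self-contained and just as short.)
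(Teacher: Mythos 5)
Your proof is correct and follows essentially the same route as the paper: verify that $\rho_n$ is constant on the defining relations $\mathcal{R}_{\styl}$, using Lemma~\ref{lemma:maximum_length_strictly_decreasing} to reduce each Knuth relation to a comparison of longest strictly decreasing subsequences within intervals of letters (the paper phrases this as comparing the positions of the entries equal to $2$). The only cosmetic difference is that you also handle the idempotent relations $a^2 \equiv a$ via that lemma, whereas the paper does a direct matrix-product computation for that case.
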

\begin{proof}
We show that $\rho_n$ satisfies the stylic relations, that is $x^2 \equiv x$ for all $x \in \cA_n$ and the Knuth relations.
\par To show that $\rho_n(x^2) = \rho_n(x)$ for all $x \in \cA_n$, begin by observing that for all $i \leq j$, $\rho_n(x^2)_{i,j} = \rho_n(x)_{i,k} \cdot \rho_n(x)_{k,j}$ for some $i \leq k \leq j$. Suppose $\rho_n(x^2)_{i,j} \neq -\infty$. If there exists $i < k < j$ such that $\rho_n(x)_{i,k} \neq -\infty \neq \rho_n(x)_{k,j}$, then we have that $i \leq \ol{x} < k \leq \ol{x} < j$, giving a contradiction. Thus, we either have $i = k$ or $k = j$. In either case, as $\rho_n(x)_{i,i} = \rho_n(x)_{j,j} = 0$, we have that $\rho_n(x^2)_{i,j} = \rho_n(x)_{i,j}$. If $\rho_n(x^2)_{i,j} = -\infty$, then as $\rho_n(x^2)_{i,j} \geq \rho_n(x)_{i,j} \cdot \rho_n(x)_{j,j}$, we have that $\rho_n(x)_{i,j} = -\infty$.
\par For the Knuth relations, both sides of each relation have the same number of occurrences of each letter, and are of length $3$. Let $w$ be one side of a Knuth relation, then by Lemma~\ref{lemma:maximum_length_strictly_decreasing}, $\rho_n(w)_{i,j} \in \{-\infty,0,1,2\}$ for all $i,j$, as $w$ does not contain a strictly decreasing subsequence of length 3.
Moreover, it is clear to see that $\rho_n(w)_{i,j} = 0$ if and only if $i = j$.
\par Let $u \equiv v$ be a Knuth relation. Then, $\rho_n(u)_{i,j} \neq -\infty$ if and only if $i = j$ or $i \leq \ol{u_t} < j$ for some $t \in \{1,2,3\}$. Thus, as $u$ and $v$ have the same content, $\rho_n(u)_{i,j} \neq -\infty$ if and only if $\rho_n(v)_{i,j} \neq -\infty$.
\par Finally, it suffices to show that $\rho_n(u)_{i,j} = 2$ if and only if $\rho_n(v)_{i,j} = 2$. Observe that, as $\rho_n(u)_{i,j} \leq 2$, then $\rho_n(u)_{i,j} = 2$ if and only if there exists $i \leq k \leq j$ such that $\rho_n(u_{s_1})_{i,k} = \rho_n(u_{s_2})_{k,j} = 1$ for some $1 \leq s_1 < s_2 \leq 3$ and hence, $i \leq \ol{u_{s_2}} < k \leq \ol{u_{s_1}} \leq j$.
\par By considering all the decreasing sequences in both sides of each Knuth relation, it suffices to show that if $\rho_n(ca)_{i,j} = 2$ then $\rho_n(ba)_{i,j} = 2$ for $a < b \leq c$ and $\rho_n(cb)_{i,j} = 2$ for any $a \leq b < c$.
\par Suppose $\rho_n(ca)_{i,j} = 2$ for $a < b \leq c$. Then, there exists $k$ such that $\rho_n(c)_{i,k} = \rho_n(a)_{k,j} = 1$, with $i \leq \ol{c} < k \leq \ol{a} < j$. But then as $a < b \leq c$, there exists $k'$ such that $i \leq \ol{b} < k' \leq \ol{a} < j$, hence $\rho_n(b)_{i,k'} = \rho_n(a)_{k',j} = 1$. Similarly, if $\rho_n(ca)_{i,j} = 2$ for $a \leq b < c$ then there exists $k$ such that $\rho_n(c)_{i,k} = \rho_n(a)_{k,j} = 1$, with $i \leq \ol{c} < k \leq \ol{a} < j$. But then as $a \leq b < c$, there exists $k'$ such that $i \leq \ol{c} < k' \leq \ol{b} < j$, hence $\rho_n(c)_{i,k'} = \rho_n(b)_{k',j} = 1$. Thus, $\rho_n$ respects the Knuth relations.
\end{proof}

Let us denote by $\varrho_n$ the induced morphism from $\styln$ to $U_{n+1}(\T)$. For example, the words $4213$, $4214234$ and $4241234$ are in the same stylic class, and the image of $[4213]_{\styl_4}$ under $\varrho_4$ is the same as that of $4213$ under $\rho_4$, that is,
\[
    \tikz[tableau]\matrix{
		4 \\
		2 \& 4 \\
		1 \&2 \& 3 \& 4 \\
	};
	\quad \xmapsto{\quad \varrho_4 \quad} \quad
    \begin{bmatrix}
        0 & 1 & 2 & 2 & 3 \\
        -\infty & 0 & 1 & 1 & 2 \\
        -\infty & -\infty & 0 & 1 & 2 \\
        -\infty & -\infty & -\infty & 0 & 1 \\
        -\infty & -\infty & -\infty & -\infty & 0
    \end{bmatrix}
\]

The following lemma allows us to deduce if a letter $a$ occurs in the $k$-th row of $N(w)$, by looking at the image of $N(w)$ under $\rho_n$ and seeing if, in line $\overline{a}$, the leftmost entry with value $k$ (if it exists) has below it an entry with value $k-1$:
\begin{lemma}
    Let $w \in \cA_n^*$, $a \in \cA_n$, and $k \in \N$. Then, $a$ occurs in the $k$-th row of $N(w)$ if and only if there exists $j \in \{1, \dots, n+1\}$, with $\overline{a} < j$, such that $\rho_n(w)_{\overline{a},j} = k$, and $\rho_n(w)_{\overline{a}+1,j} = k-1$.
\end{lemma}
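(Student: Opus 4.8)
The plan is to convert the matrix condition into a statement about strictly decreasing subsequences of $w$ over intervals of letters, via Lemma~\ref{lemma:maximum_length_strictly_decreasing}, and then to match this against the description of the rows of $N(w)$ provided by the maps ${\uparrow}^{\bullet}_w$ together with Lemmas~\ref{lemma:row_bump} and \ref{lemma:up_arrow_to_decreasing_subsequence}.

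First I would set up a dictionary. For $c\in\{1,\dots,a\}$ let $f(c)$ and $g(c)$ denote the lengths of the longest strictly decreasing subsequences of $w$ using letters only from $\{c,\dots,a\}$ and only from $\{c,\dots,a-1\}$ respectively, with $g(a):=0$. Setting $c=\overline{j}+1$, so that $c$ ranges over $\{1,\dots,a\}$ as $j$ ranges over $\{\overline{a}+1,\dots,n+1\}$, Lemma~\ref{lemma:maximum_length_strictly_decreasing} shows that $\rho_n(w)_{\overline{a},j}=k$ if and only if $f(c)=k$, and $\rho_n(w)_{\overline{a}+1,j}=k-1$ if and only if $g(c)=k-1$ (an off-diagonal entry being $-\infty$ exactly when the corresponding length is $0$, and the boundary case $j=\overline{a}+1$ giving the diagonal entry $0=g(a)$). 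Since a longest strictly decreasing subsequence over $\{c,\dots,a\}$ either avoids $a$ or, when it contains $a$, becomes upon deleting that leading occurrence a strictly decreasing subsequence over $\{c,\dots,a-1\}$ shorter by exactly one, one always has $f(c)\in\{g(c),g(c)+1\}$ (compare Corollary~\ref{corollary:difference_of_one_in_finite_adjacent_entries_of_matrix}); hence the right-hand side of the lemma is equivalent to the assertion that \emph{there is a $c\in\{1,\dots,a\}$ with $f(c)=k$ such that every strictly decreasing subsequence of $w$ of length $k$ over $\{c,\dots,a\}$ contains the letter $a$}. For $k=1$ this says ``$a\in\supp{w}$'', which is exactly ``$a$ occurs in the first row of $N(w)$''; so from here on $k\geq2$.

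For the implication ``$a$ occurs in row $k$'' $\Rightarrow$ the above assertion, I would invoke Lemma~\ref{lemma:row_bump} to obtain $p$ with ${\uparrow}^{k-1}_w(p)=a$, and then Lemma~\ref{lemma:up_arrow_to_decreasing_subsequence} to obtain a strictly decreasing subsequence $a=w_{s_1}>w_{s_2}>\dots>w_{s_k}$ with $s_1<\dots<s_k$ and ${\uparrow}^{l-1}_w(s_l)=a$ for all $1\leq l\leq k$. Taking $c$ to be the smallest letter $w_{s_k}$ of this subsequence, the subsequence itself witnesses $f(c)\geq k$, and I would then use the chain of equalities ${\uparrow}^{l-1}_w(s_l)=a$ to force $f(c)=k$ and the property that every strictly decreasing subsequence of $w$ of length $k$ over $\{c,\dots,a\}$ contains $a$: unwinding the recursion, ${\uparrow}^{l-1}_w(s_l)=a$ means that $a$ is the \emph{least} element of $\supp{\delta^{l-2}(w_{\leq s_l})}$ that strictly exceeds the letter ${\uparrow}^{l-2}_w(s_l)$ bumped into that row, so no letter of the prefix $w_{\leq s_l}$ can contribute, at that level, a value strictly between them. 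Feeding this tightness (for $l=k,k-1,\dots,2$) into Lemma~\ref{lemma:maximum_length_strictly_decreasing} applied inside the prefixes $w_{\leq s_l}$ then rules out both a strictly decreasing subsequence of length $k+1$ over $\{c,\dots,a\}$ and one of length $k$ over $\{c,\dots,a-1\}$, which is what is needed.

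For the converse, I would fix $c$ as in the assertion and a strictly decreasing subsequence $a=d_1>d_2>\dots>d_k$ over $\{c,\dots,a\}$ of length $k$, chosen so that its first letter sits at the first occurrence $q_1$ of $a$ in $w$; the goal is $a\in\supp{\delta^{k-1}(w)}$, equivalently ${\uparrow}^{k-1}_w(i)=a$ for some $i$. Since $\supp{\delta^{k-1}(w)}$ is simultaneously the $k$-th row of $N(w)$ and the $(k-1)$-th row of $N(\delta(w))$, I would argue by induction on $k$, the base $k=1$ being ``$a\in\supp{w}$''. In the inductive step, maximality of the subsequence forces ${\uparrow}^1_w(q_2)=a$ (a letter $b$ with $d_2<b<a$ occurring at a position $\geq q_1$ would produce a decreasing subsequence of length $k+1$ over $\{c,\dots,a\}$), and then ${\uparrow}^1_w(q_2)>{\uparrow}^1_w(q_3)>\dots>{\uparrow}^1_w(q_k)$ (each term being bounded above by the preceding $d_i$), so these form a strictly decreasing subsequence of $\delta(w)$ of length $k-1$ still topped by $a$; after checking that the extremality hypotheses on the subsequence survive this single application of $\delta$ (with threshold ${\uparrow}^1_w(q_k)$), the induction hypothesis finishes the argument. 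The step I expect to be the main obstacle is precisely this tightness bookkeeping in both directions — faithfully converting the chain of arrow-equalities ${\uparrow}^{l-1}_w(s_l)=a$ into, and back out of, the extremality of the decreasing subsequences over the alphabet intervals — which is in effect a Schensted/Greene-type correspondence for $N$-tableaux, and which requires careful control of exactly which letters can land strictly between consecutive values of the chosen subsequence at each level (and, for the converse, of how the extremality conditions transform under one pass of $\delta$).
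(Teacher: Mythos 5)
Your overall framework --- translating matrix entries into lengths of longest strictly decreasing subsequences over letter intervals via Lemma~\ref{lemma:maximum_length_strictly_decreasing}, and connecting these to the rows of $N(w)$ through Lemmas~\ref{lemma:row_bump} and~\ref{lemma:up_arrow_to_decreasing_subsequence} --- is the same as the paper's, and your reformulation of the right-hand side as ``$f(c)=k$ and $g(c)=k-1$'' is correct. But there is a genuine gap in the direction ``$a$ occurs in row $k$ $\Rightarrow$ matrix condition'': the witness $c=w_{s_k}$ does not work in general, and the ``tightness'' you claim cannot be extracted from the arrow-equalities ${{\uparrow}^{l-1}_w}{(s_l)}=a$, because those equalities only constrain the prefixes $w_{\leq s_l}$, whereas your $f(c)$ and $g(c)$ see the whole word. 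Concretely, take $n=4$, $w=4232$, $a=4$, $k=2$. Then $4$ lies in the second row of $N(w)$, and ${{\uparrow}^{1}_w}{(2)}=4$, so Lemma~\ref{lemma:up_arrow_to_decreasing_subsequence} applied at $s_2=2$ returns the decreasing subsequence $w_1=4>w_2=2$, hence $c=2$. But the longest strictly decreasing subsequence of $w$ over $\{2,3,4\}$ is $4,3,2$ (positions $1,3,4$), so $f(2)=3\neq k$ and $g(2)=2\neq k-1$; the letter $3$, which occurs \emph{after} position $s_2$, is invisible to your prefix-based tightness argument. Only the choice $c=3$ (equivalently $j=3$) works here, and proving that some admissible choice always exists is exactly the hard part: the paper does it by taking $j$ \emph{minimal} with $\rho_n(w)_{\ol{a},j}=k$ and then running a delicate contradiction argument that interleaves two decreasing chains (the induction establishing $w_{s_{i+1}}\geq w_{r_i}$). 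Nothing in your sketch replaces that step.

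In the converse direction your route also differs from the paper's and leaves its crux open. The paper argues directly: from the length-$k$ decreasing subsequence guaranteed by $\rho_n(w)_{\ol{a},j}=k$ it sets $b={{\uparrow}^{k-1}_w}{(s_k)}$, notes $b\leq a$, and uses $\rho_n(w)_{\ol{a}+1,j}=k-1$ together with Lemma~\ref{lemma:up_arrow_to_decreasing_subsequence} to force $b=a$; no induction on $k$ or passage to $\delta(w)$ is needed. Your induction through $\delta(w)$ could plausibly be made to work, but the step you yourself flag (checking that the extremality hypotheses survive one application of $\delta$) is precisely where the content lies, and it is not carried out. As written, neither direction is complete.
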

\begin{proof}
Suppose for some $\ol{a} < j$, $\rho_n(w)_{\ol{a},j} = k$ and $\rho_n(w)_{\overline{a}+1,j} = k-1$. Then, by Lemma~\ref{lemma:maximum_length_strictly_decreasing}, there exists a strictly decreasing subsequence $w_{s_1},\dots,w_{s_k}$ of $w$ such that $a \geq w_{s_1} > \dots > w_{s_k} \geq \ol{j} + 1$. 
\par Recall the ``arrow'' notation introduced in Subsection~\ref{subsection:the_stylic_monoid}. We want to show that ${{\uparrow}^{k-1}_w}{(s_k)} = a$. As $w_{s_1},\dots,w_{s_k}$ is a strictly decreasing sequence of length $k$, $b := {{\uparrow}^{k-1}_w}{(s_k)} \neq \varepsilon$. Note that $b \leq a$ as ${{\uparrow}^{l}_w}{(s_k)} \leq w_{s_{k-l}}$ by the definition of ${{\uparrow}^{l}_w}$. Thus, by Lemma~\ref{lemma:up_arrow_to_decreasing_subsequence}, there is a strictly decreasing subsequence $w_{s'_1},\dots,w_{s'_{k-1}},w_{s_k}$ such that $w_{s'_1} = b$. 
However, as $\rho_n(w)_{\overline{a}+1,j} = k-1$, by Lemma~\ref{lemma:maximum_length_strictly_decreasing}, there is no strictly decreasing subsequence of length $k$ only using letters between $\ol{j}+1$ and $a-1$. Hence, $a - 1 < w_{s'_1}$ or $w_{s_k} < \ol{j} + 1$. Thus, $b = w_{s'_1} > a - 1$ as $w_{s_k} \geq \ol{j} + 1$. Therefore, $a = b$, and hence, by Lemma~\ref{lemma:row_bump}, $a$ occurs in the $k$-th row of $N(w)$.
\par Suppose now that $a$ occurs in the $k$-th row of $N(w)$. Hence, by Lemma~\ref{lemma:row_bump}, there exists an index $s_k \leq |w|$ such that ${{\uparrow}^{k-1}_w}{(s_k)} = a$, and therefore, by Lemma~\ref{lemma:up_arrow_to_decreasing_subsequence}, there exists a strictly decreasing subsequence $w_{m_1},\dots,w_{m_k}$ of $w$, where $w_{m_1} = a$ and hence, by Lemma~\ref{lemma:maximum_length_strictly_decreasing}, $\rho(w)_{\ol{a},n+1} \geq k$.

\par Choose $j$ as the  minimum index such that $\rho_n(w)_{\ol{a},j} = k$, which exists by Corollary~\ref{corollary:difference_of_one_in_finite_adjacent_entries_of_matrix}, since $\rho_n(w)_{\ol{a},\ol{a}} = 0$.
Suppose, in order to obtain a contradiction, that $\rho_n(w)_{\overline{a}+1,j} = k$. Let $b < a$ be such that $\rho_n(w)_{\overline{b},j} = k$ and $\rho_n(w)_{\overline{b}+1,j} = \rho_n(w)_{\overline{b},j-1} = k-1$. Notice that such a $b$ exists, by Corollary~\ref{corollary:difference_of_one_in_finite_adjacent_entries_of_matrix}. By Lemma~\ref{lemma:maximum_length_strictly_decreasing}, there exists a strictly decreasing $w_{p_1},\dots,w_{p_k}$ of $w$ such that $b \geq w_{p_1} > \dots > w_{p_k} \geq \ol{j} + 1$. By the same reasoning as given before, we can show that ${{\uparrow}^{k-1}_w}{(p_k)} = b$. Thus, by Lemma~\ref{lemma:up_arrow_to_decreasing_subsequence}, there exists a strictly decreasing subsequence $w_{r_1},\dots,w_{r_k}$ of $w$ such that $w_{r_k} = w_{p_k} \geq \ol{j} + 1$, $w_{r_1} = b$, and ${{\uparrow}^{i-1}_w}{(r_i)} = b$ for $1 < i \leq k$. Notice that, since $\rho_n(w)_{\overline{b},j-1} = k-1$, then $w_{r_k} \leq \ol{j}+1$ by Lemma~\ref{lemma:maximum_length_strictly_decreasing}, otherwise we would have a strictly decreasing subsequence of $w$ of length $k$ only using letters between $\ol{j}+2$ and $\ol{i}$. Hence, $w_{r_k} = \ol{j} + 1$.

On the other hand, as $a$ is in the $k$-th row of $N(w)$, by Lemma~\ref{lemma:row_bump}, there exists $s_k$ such that ${{\uparrow}^{k-1}_w}{(s_k)} = a$, and hence, by Lemma~\ref{lemma:up_arrow_to_decreasing_subsequence}, there exists a strictly decreasing sequence $w_{s_1},\dots,w_{s_k}$ where $w_{s_1} = a$,  ${{\uparrow}^{i-1}_w}{(s_i)} = a$ for $1 < i \leq k$, and $w_{s_k} \leq \ol{j}+1$, since $\rho_n(w)_{\ol{a},j-1} = k-1$.

\par As $w_{s_1} > w_{r_1}$, we have that $r_1 \leq s_1$ as otherwise $w_{s_1},w_{r_1},\dots,w_{r_k}$ would form a strictly decreasing sequence between $a$ and $w_{r_k}$ of length $k+1$. Moreover, if we had $w_{s_2} < w_{r_1}$, then we would have $a = {{\uparrow}^1_w}{(s_2)} \leq w_{r_1} = b < a$ as $r_1 \leq s_2$. Thus, $w_{s_2} \geq w_{r_1}$.

\par By induction, we will show that $w_{s_{i+1}} \geq w_{r_i}$, for all $1 \leq i \leq k-1$. The base case was covered in the previous paragraph. Suppose that there is $1 \leq i \leq k-2$ such that $w_{s_{i+1}} \geq w_{r_i}$. Notice that, if $s_{i+1} < r_{i+1}$, then, by our assumption, $w_{s_{i+1}} \geq w_{r_i} > w_{r_{i+1}}$, and hence $w_{s_1},\dots,w_{s_{i+1}},w_{r_{i+1}},\dots,w_{r_k}$ is a strictly decreasing sequence between $a$ and $w_{r_k}$ of length $k+1$, giving a contradiction. So, $r_{i+1} \leq s_{i+1}$.
Since $w_{s_{i+2}}$ occurs after $w_{s_{i+1}}$, which was shown to occur after $w_{r_{i+1}}$, we have that $w_{s_{i+2}} < w_{r_{i+1}}$ implies ${{\uparrow}^1_w}{(s_{i+2})} \leq w_{r_{i+1}}$ and hence $a = {{\uparrow}^{i+1}_w}{(s_{i+2})} \leq {{\uparrow}^i_w}{(r_{i+1})} = b < a$, giving a contradiction. Thus, $w_{s_{i+2}} \geq w_{r_{i+1}}$. 

\par Therefore, we can conclude that 
\[\ol{j}+1 \geq w_{s_k} \geq w_{r_{k-1}} > w_{r_k} = \ol{j}+1,\]
which results in a contradiction. Thus, $\rho_n(w)_{\ol{a}+1,j} \neq k$, which, by Corollary~\ref{corollary:difference_of_one_in_finite_adjacent_entries_of_matrix}, implies that $\rho_n(w)_{\ol{a}+1,j} = k-1$.
\end{proof}

\begin{theorem}
The morphism $\varrho_n \colon \styln \rightarrow U_{n+1}(\T)$ is a faithful representation of $\styln$.
\end{theorem}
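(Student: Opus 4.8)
The plan is to establish that $\varrho_n$ is injective, which is exactly the claim that it is a faithful representation. Since the elements of $\styln$ are in bijection with the $N$-tableaux over $\cA_n$ via $[w]_{\styl} \mapsto N(w)$, and $u \equiv_\styl v$ if and only if $N(u) = N(v)$, it suffices to show that the matrix $\rho_n(w)$ determines the $N$-tableau $N(w)$. Granting this, $\rho_n(u) = \rho_n(v)$ forces $N(u) = N(v)$, hence $[u]_\styl = [v]_\styl$, so the morphism $\varrho_n$ from Proposition~\ref{proposition:well_defined_morphism} is injective.

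First I would recall that an $N$-tableau is completely determined by the data of which letters of $\cA_n$ occur in each of its rows (each row being a subset of $\cA_n$, the rows nested increasingly going up the tableau). Thus, to reconstruct $N(w)$ from $M := \rho_n(w)$ it is enough, for every letter $a \in \cA_n$ and every $k \in \N$, to decide from $M$ alone whether $a$ lies in the $k$-th row of $N(w)$.

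This is precisely what the lemma immediately preceding the theorem provides: $a$ occurs in the $k$-th row of $N(w)$ if and only if there exists an index $j \in \{1,\dots,n+1\}$ with $\ol{a} < j$ such that $M_{\ol{a},j} = k$ and $M_{\ol{a}+1,j} = k-1$ — a condition that can be read off directly from the entries of $M$. Applying this criterion over all $a$ and all $k$ recovers every row of $N(w)$, so $N(w)$ is a function of $\rho_n(w)$, and injectivity of $\varrho_n$ follows as above.

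The substantive work is already behind us: the combinatorial content lies in that preceding lemma (and in Lemmas~\ref{lemma:maximum_length_strictly_decreasing} and~\ref{lemma:up_arrow_to_decreasing_subsequence} feeding into it), which translate the geometry of decreasing subsequences and the ``arrow'' bumping data into the pattern of finite entries of $\rho_n(w)$. Consequently I do not expect any real obstacle in the proof of the theorem itself beyond assembling these ingredients; the only point requiring a little care is the bookkeeping that the reconstruction described yields exactly the rows of a genuine $N$-tableau, but this is automatic since we are recovering the already-existing object $N(w)$ rather than building one from scratch.
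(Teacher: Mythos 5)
Your proposal is correct and follows exactly the paper's own argument: injectivity is reduced to reconstructing $N(w)$ from $\rho_n(w)$ via the preceding lemma's criterion for row membership, together with the fact that an $N$-tableau is determined by the supports of its rows. No differences worth noting.
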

\begin{proof}
It suffices to show that we can construct $N(w)$ from $\rho_n(w)$. By the previous lemma, a letter $a$ is in the $k$-th row of $N(w)$ if and only if there exists an index $j$ such that $\rho_n(w)_{\ol{a}, j} = k$ and $\rho_n(w)_{\ol{a}+1, j} = k-1$. Since $N$-tableaux are uniquely determined by the support of each row (see \cite[Subsection~6.1]{abram_reutenauer_stylic_monoid_2021}), and $\rho_n$ induces $\varrho_n$ by Proposition~\ref{proposition:well_defined_morphism}, we can recover from $\varrho_n([w]_{\styln})$ all the information needed to construct $N(w)$.
\end{proof}

As an example, recall the image of $[4213]_{\styl_4}$ under $\varrho_4$, that is,
\[
    \tikz[tableau]\matrix{
		4 \\
		2 \& 4 \\
		1 \&2 \& 3 \& 4 \\
	};
	\quad \xmapsto{\quad \varrho_4 \quad} \quad
    \begin{bmatrix}
        0 & 1 & 2 & 2 & 3 \\
        -\infty & 0 & 1 & 1 & 2 \\
        -\infty & -\infty & 0 & 1 & 2 \\
        -\infty & -\infty & -\infty & 0 & 1 \\
        -\infty & -\infty & -\infty & -\infty & 0
    \end{bmatrix}
\]
Notice that $\rho_n(4213)_{1,5} = 3$ and $\rho_n(4213)_{2,5} = 2$, hence, $4$ is in the third row of $N(4213)$. However, since $\rho_n(4213)_{2,4} = \rho_n(4213)_{3,4} = 1$ and $\rho_n(4213)_{2,5} = \rho_n(4213)_{3,5} = 2$, we have that $3$ is neither in the second nor the third row of $N(4213)$; on the other hand, since $\rho_n(4213)_{2,3} = 1$, we can conclude that $3$ is in the first row of $N(4213)$. Similarly, we can see that $2$ is in the second, but not the third row, and $1$ is only in the first row. With this information, we have all the necessary information to construct $N(4213)$.

\begin{corollary}
$\cV(\styln) \subseteq \cV(J_n)$ for all $n \in \N$.
\end{corollary}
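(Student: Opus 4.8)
The plan is to combine the faithful representation just constructed with the Johnson--Fenner description of the variety generated by unitriangular tropical matrices, so the argument is then immediate. First I would invoke the previous theorem: because $\varrho_n \colon \styln \to U_{n+1}(\T)$ is a faithful representation, it is an injective morphism, and hence $\styln$ is isomorphic to a submonoid of $U_{n+1}(\T)$. Since varieties are closed under taking submonoids, this already gives $\cV(\styln) \subseteq \cV(U_{n+1}(\T))$.

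Second, I would apply the result of Johnson and Fenner recalled in Subsection~\ref{subsection:semirings_and_matrix_semigroups} (\cite[Corollary~3.3]{johnson_fenner_unitriangular}): for every non-trivial idempotent commutative semiring $S$, the monoid $U_{n+1}(S)$ generates $\cV(J_n)$. The only verification needed is that $\T$ meets these hypotheses, which was already noted in Subsection~\ref{subsection:semirings_and_matrix_semigroups}: $\T$ is a commutative semiring, it is idempotent since $a \oplus a = \max(a,a) = a$, and it is non-trivial since its additive identity $-\infty$ differs from its multiplicative identity $0$. Hence $\cV(U_{n+1}(\T)) = \cV(J_n)$, and chaining the two inclusions yields $\cV(\styln) \subseteq \cV(J_n)$, which is the assertion.

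I do not expect any real obstacle at this step: all of the combinatorial content has been absorbed into the faithfulness theorem, which in turn rests on the preceding lemmas and on Corollary~\ref{corollary:difference_of_one_in_finite_adjacent_entries_of_matrix}, while the variety-theoretic input is quoted directly from \cite{johnson_fenner_unitriangular}. The one point worth flagging explicitly is that this establishes only one of the two inclusions; proving the reverse containment $\cV(J_n) \subseteq \cV(\styln)$, and thereby the equality $\cV(\styln) = \cV(J_n)$ together with the statement that $\styln$ generates $\vJ_n$, is deferred to the next section, where one shows conversely that every identity satisfied by $\styln$ already lies in $J_n$.
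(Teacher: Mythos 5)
Your proposal is correct and matches the paper's argument exactly: the paper's proof is the one-line "Follows from the previous theorem, and \cite[Corollary 3.3]{johnson\_fenner\_unitriangular}," and you have simply spelled out the same two steps (faithfulness gives an embedding into $U_{n+1}(\T)$, closure of varieties under submonoids, and the Johnson--Fenner identification of $\Var(U_{n+1}(\T))$ with $\Var(J_n)$). Your closing remark that the reverse inclusion is deferred to the next section also agrees with the paper's structure.
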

\begin{proof}
Follows from the previous theorem, and \cite[Corollary 3.3]{johnson_fenner_unitriangular}.
\end{proof}

We now define two semirings: $\N_{\max} := \T \cap (\N  \cup \{0, -\infty\})$; and $[n]_{\max} := \{ x \in \N_{\max} \; | \; x \leq n \}$, for $n \in \N$, with operations $\max$ and $n$-truncated addition, that is $x \otimes y := \min(x+y,n)$. It is easy to see that these semirings are non-trivial and idempotent. Furthermore, we define the morphism $\varphi_{n+1} \colon U_{n+1}(\N_{\max}) \rightarrow U_{n+1}([n]_{\max})$ to be given by $\varphi_{n+1}(X)_{i,j} = \min(X_{i,j},n)$.
\par Note that $\varrho_n(\styln) \subseteq U_{n+1}(\N_{\max})$. For the following corollary, treat $\varrho_n$ as a morphism with codomain $U_{n+1}(\N_{\max})$. Consider the morphism $\ol{\varrho_n} \colon \styln \rightarrow U_{n+1}([n]_{\max})$ defined by $\ol{\varrho_n}([x]_{\styln}) = (\varphi_{n+1} \circ \varrho_n([x]_{\styln}))$, for $x \in \cA_n^*$.
\begin{corollary}
The morphism $\ol{\varrho_n} \colon \styln \rightarrow U_{n+1}([n]_{\max})$ is a faithful representation of $\styln$.
\end{corollary}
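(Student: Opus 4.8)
The plan is to deduce the faithfulness of $\ol{\varrho_n}$ from that of $\varrho_n$, which was established in the theorem above. Since $\ol{\varrho_n} = \varphi_{n+1} \circ \varrho_n$ is a composite of morphisms and $\varrho_n$ is injective, it is enough to show that the truncation morphism $\varphi_{n+1}$ restricts to an injective map on the submonoid $\varrho_n(\styln) \subseteq U_{n+1}(\N_{\max})$.

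First I would record that this restriction is actually the identity map. For $w \in \cA_n^*$ and $1 \leq i < j \leq n+1$, any strictly decreasing subsequence of $w$ using only letters between $\ol{j}+1$ and $\ol{i}$ has length at most $\ol{i} - \ol{j} = j - i \leq n$, so by Lemma~\ref{lemma:maximum_length_strictly_decreasing} every finite entry $\rho_n(w)_{i,j}$ is at most $n$ --- this is exactly the immediate corollary noted after that lemma, and the diagonal entries are $0$. Hence for every $X \in \varrho_n(\styln)$ and all $i,j$ we have $\varphi_{n+1}(X)_{i,j} = \min(X_{i,j},n) = X_{i,j}$, so $\varphi_{n+1}$ is the identity on $\varrho_n(\styln)$ and, in particular, injective there.

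Putting the pieces together: if $\ol{\varrho_n}([u]_{\styln}) = \ol{\varrho_n}([v]_{\styln})$, then $\varphi_{n+1}(\varrho_n([u]_{\styln})) = \varphi_{n+1}(\varrho_n([v]_{\styln}))$; injectivity of $\varphi_{n+1}$ on the image of $\varrho_n$ gives $\varrho_n([u]_{\styln}) = \varrho_n([v]_{\styln})$, and faithfulness of $\varrho_n$ then forces $[u]_{\styln} = [v]_{\styln}$. I do not anticipate a genuine obstacle: the only facts that need care --- that $\varphi_{n+1}$ is a well-defined morphism and that $\varrho_n(\styln) \subseteq U_{n+1}(\N_{\max})$ --- are already recorded in the text preceding the statement, so the argument is essentially bookkeeping around the uniform bound $\rho_n(w)_{i,j} \leq n$. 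One could additionally remark that, since $[n]_{\max}$ is a non-trivial idempotent commutative semiring, this simultaneously gives a second proof that $\styln$ embeds into such a $U_{n+1}(S)$.
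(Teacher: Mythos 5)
Your proof is correct and takes essentially the same route as the paper: both arguments rest on the uniform bound $\varrho_n([w]_{\styln})_{i,j} \leq n$, which makes the truncation $\varphi_{n+1}$ act as the identity (hence injectively) on $\varrho_n(\styln)$, so faithfulness is inherited from $\varrho_n$. The paper states this more tersely, while you spell out why the bound holds via Lemma~\ref{lemma:maximum_length_strictly_decreasing}, but the content is identical.
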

\begin{proof}
We can see that $\ol{\varrho_n}$ is a morphism as $\varphi_{n+1}$ and $\varrho_n$ are both morphisms. Moreover, for $w_1,w_2 \in \cA_n^*$, 
\[\ol{\varrho_n}([w_1]_{\styln}) = \ol{\varrho_n}([w_2]_{\styln}) \quad \text{if and only if} \quad \varrho_n([w_1]_{\styln}) = \varrho_n([w_2]_{\styln}),\]
as $\varrho_n([w]_{\styln})_{i,j} \leq n$ for all $1 \leq i,j \leq n+1$ and $w \in \cA_n^*$. Hence, $\ol{\varrho_n}(\styln) \cong \varrho_n(\styln) \cong \styln$.
\end{proof}

\begin{remark}
By \cite{johnson_fenner_unitriangular}, $U_{n+1}(\T), \ U_{n+1}(\N_{\max})$ and $U_{n+1}([n]_{\max})$ satisfy the exact same set of monoid identities. Hence, we gain no more information about the monoid identities satisfied by $\styln$ by considering $\ol{\varrho_n}$ rather than $\varrho_n$.
\end{remark}


\section{Identities of the stylic monoid and connections to other plactic-like monoids}
\label{section:main_result_and_conenctions_to_other_plactic_like_monoids}

We now show that $\styln$ and $U_{n+1}(\T)$ satisfy the exact same set of monoid identities, thus proving that $\styln$ and $U_{n+1}(\T)$ both generate the variety $\Var(J_n)$, and that $\styln$ generates the pseudovariety $\vJ_n$.

\begin{theorem}
Let $n \in \N$ and let $u \approx v$ be a non-trivial identity satisfied by $\styl_n$. Then, $u \approx v \in J_n$.
\end{theorem}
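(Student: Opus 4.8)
The goal is to show that every non-trivial identity $u \approx v$ satisfied by $\styln$ lies in $J_n$, i.e. that $u$ and $v$ share the same subsequences of length $\leq n$. Combined with the corollary $\cV(\styln) \subseteq \cV(J_n)$ already established (via the faithful representation in $U_{n+1}(\T)$ and Johnson–Fenner), this gives that the equational theory of $\styln$ is exactly $J_n$. So the work here is the reverse inclusion at the level of identities: I must manufacture, from a subsequence of length $\leq n$ occurring in $u$ but not in $v$, an explicit evaluation $\psi \colon \Sigma^* \to \styln$ that separates $u$ and $v$.

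The plan is to argue by contrapositive: suppose $u \approx v \notin J_n$, so without loss of generality there is a word $a_1 a_2 \cdots a_k$ with $k \leq n$ that is a subsequence of $u$ but not of $v$ (the letters $a_i$ are variables, not necessarily distinct). I would then choose the evaluation that sends each variable $x$ to the stylic class of a suitable word over $\cA_n$, designed so that the product along $u$ ``records'' the witnessing subsequence while the product along $v$ cannot. The natural choice is to exploit the representation: working inside $U_{n+1}(\T)$ via $\varrho_n$, and recalling Lemma~\ref{lemma:maximum_length_strictly_decreasing}, the $(1, n+1)$-entry of $\varrho_n(w)$ equals the maximal length of a strictly decreasing subsequence of $w$ over $\cA_n$. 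So I would map the variables so that the image of $u$ has a strictly decreasing subsequence of length $k$ realized precisely along the chosen occurrence of $a_1 \cdots a_k$ — concretely, send the $i$-th distinct variable appearing in that subsequence to (the stylic class of) the one-letter word $\ol{\imath} = n+1-i \in \cA_n$ arranged so that reading $u$ left to right produces $k > k-1 > \cdots$, and send every other variable to the identity (empty word). Since $a_1 \cdots a_k$ is \emph{not} a subsequence of $v$, the corresponding product along $v$ fails to produce a decreasing subsequence of that length, so $\varrho_n(\psi(u))_{1,n+1} \neq \varrho_n(\psi(v))_{1,n+1}$, whence $\psi(u) \neq \psi(v)$ in $\styln$ by faithfulness of $\varrho_n$. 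One must be slightly careful when the $a_i$ are not all distinct (repeated variables), but the generator-idempotent relation $x^2 \equiv x$ in $\styln$ together with the fact that a subsequence may reuse a variable handles this: assign the same letter to repeated variables and use that within $\styln$ repeated letters collapse, so the decreasing-subsequence count is governed by the pattern of variables, matching the combinatorics of subsequences of length $\leq n$.

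The main obstacle I anticipate is the bookkeeping around repeated letters/variables and making the ``records a decreasing subsequence'' intuition into a rigorous statement: I need an evaluation that is simultaneously (a) forced to produce the length-$k$ decreasing subsequence when fed $u$ and (b) provably unable to do so when fed $v$, and (b) is the delicate direction since $v$ might still contain long decreasing subsequences built from \emph{other} occurrences of letters. The fix is to be economical — assign distinct letters of $\cA_n$ to the distinct variables of the chosen subsequence in strictly decreasing order of position, and the identity to all other variables, so that the only way to get a decreasing subsequence of length $k$ from the image of a word is to trace out an occurrence of the variable-pattern $a_1 \cdots a_k$; since $v$ has no such occurrence, its image has strictly smaller top-right entry. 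Then invoke the faithful representation $\varrho_n$ (the Theorem just proved) to conclude $\psi(u) \ne \psi(v)$, contradicting that $\styln$ satisfies $u \approx v$. I would also remark that, alternatively, one can run this argument directly on $N$-tableaux using Lemma~\ref{lemma:row_bump} and Lemma~\ref{lemma:up_arrow_to_decreasing_subsequence} — a letter lands in the $k$-th row of $N(w)$ iff $w$ has an appropriate strictly decreasing subsequence of length $k$ — but the matrix formulation via Lemma~\ref{lemma:maximum_length_strictly_decreasing} is cleaner.
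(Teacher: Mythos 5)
Your overall strategy coincides with the paper's: argue the contrapositive, build an evaluation from a length-$k$ subsequence witnessing $u \approx v \notin J_n$, and separate the two sides by a maximal-decreasing-subsequence invariant (read off either from the matrix entries via Lemma~\ref{lemma:maximum_length_strictly_decreasing} or from the $N$-tableau via Lemmas~\ref{lemma:row_bump} and~\ref{lemma:up_arrow_to_decreasing_subsequence}; the paper uses the latter). The problem is the evaluation itself. You send each \emph{distinct} variable of the witnessing subsequence to a single letter and, when a variable is repeated in the pattern, you assign it the same letter and appeal to $x^2 \equiv x$ to ``collapse'' repetitions. This is exactly backwards: a strictly decreasing subsequence cannot reuse a letter, so a repeated variable mapped to a single letter can contribute at most one term to any decreasing chain, and the image of $u$ is \emph{not} forced to contain a decreasing subsequence of length $k$. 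A decisive counterexample is $u = x^{n-1}$, $v = x^{n}$: this identity is not in $J_n$ and is not satisfied by $\styl_n$, yet \emph{every} evaluation sending $x$ to a one-letter word (or to $\varepsilon$) identifies the two sides, precisely because $c^2 \equiv_{\styl} c$ for each generator $c$. So your scheme provably cannot work in general, and no appeal to idempotency repairs it.

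The paper's fix is the key idea you are missing: writing the witnessing pattern as $a_k,\dots,a_1$, each variable $x_j$ is sent not to a single letter but to the \emph{strictly increasing word} over $\cA_k$ whose support is $\{\,i : a_i = x_j\,\}$, i.e.\ the set of all positions of the pattern at which $x_j$ occurs (and variables not occurring in the pattern get $\varepsilon$). Then an occurrence of the pattern in $w$ yields the decreasing subsequence $k, k-1, \dots, 1$ in $\phi(w)$, and conversely, because each letter $i$ appears in the image of exactly one variable, any occurrence of $k,\dots,1$ in $\phi(w)$ traces back to an occurrence of the pattern in $w$; since $k,\dots,1$ is the unique strictly decreasing word of length $k$ over $\cA_k$ beginning with $k$, the letter $k$ lands in the $k$-th row of $N(\phi(u))$ but not of $N(\phi(v))$. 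For the case $u = x^{n-1} \approx x^{n} = v$ this sends $x$ to $12\cdots n$, and $(12\cdots n)^{n-1}$ has no decreasing subsequence of length $n$ while $(12\cdots n)^{n}$ does. Your final step (concluding $\phi(u) \not\equiv_{\styl} \phi(v)$ from the matrix entries rather than from the tableau) is fine and needs only that $\rho_n$ is a well-defined morphism, not faithfulness; but without the multi-letter evaluation the argument does not go through.
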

\begin{proof}
We show the contrapositive of the statement. Let $\Sigma = \{x_1,\dots, x_m\}$ be a set of variables, and let $u,v \in \Sigma^*$ be such that $u \approx v \notin J_n$. Without loss of generality, we can assume that there exist variables $a_k,\dots,a_1 \in \Sigma$ which form a subsequence of $u$ but not of $v$, for some $k \leq n$.

Let $y_1,\dots,y_m$ be strictly increasing words over $\cA_k$, defined as follows:
\[ i \in \supp{y_j} \text{ if and only if } a_i = x_j, \]
for $1 \leq i \leq k$, $1 \leq j \leq m$. In other words, $y_j$ is the strictly increasing product of indexes $i$ such that $a_i$ is the variable $x_j$.

Let $\phi \colon \Sigma^* \rightarrow \cA_k^*$ be the evaluation given by $x_i \mapsto y_i$. Notice that, since $j \in \supp{\phi(a_j)}$, then, for any $w \in \Sigma^*$, if $w$ contains the subsequence $a_k,\dots,a_1$, then $\phi(w)$ contains the subsequence $k,\dots,1$. On the other hand, if $\phi(w)$ contains the subsequence $k,\dots,1$, then each index $i$ occurs in some $y_{j_i}$, such that $\phi(w)$ contains the subsequence $y_{j_k},\dots,y_{j_1}$. This implies that $w$ contains the subsequence $x_{j_k},\dots,x_{j_1}$, which, by the definition of $y_j$, is the subsequence $a_k,\dots,a_1$, giving a contradiction. 

Hence, $\phi(u)$ contains the subsequence $k, \dots, 1$, but $\phi(v)$ does not. Therefore, since this subsequence is the only strictly decreasing subsequence, of length $k$, whose first letter is $k$, that can occur in a word over $\cA_k$, we have that, by Lemmas~\ref{lemma:up_arrow_to_decreasing_subsequence} and \ref{lemma:row_bump}, $N(\phi(u))$ contains $k$ in the $k$-th row, but $N(\phi(v))$ does not. Hence $\phi(u) \not\equiv_{\styl} \phi(v)$ and therefore $u \approx v$ is not satisfied by $\styl_k$. Since $k \leq n$, $u \approx v$ is not satisfied by $\styl_n$.
\end{proof}

Therefore, the stylic monoid of rank $n$ joins an increasing list of monoids (see \cite{volkov_reflexive_relations,johnson_fenner_unitriangular}) whose equational theory is $J_n$.

\begin{corollary}
\label{corollary:main_result}
For each $n \in \N$, $\styl_n$ generates the variety $\Var(J_n)$ and the pseudovariety $\vJ_n$. Furthermore, $\Var(\styln) \subsetneq \Var(\styl_{n+1})$, and $\styl_n$ is finitely based if and only if $n \leq 3$: $\styl_1$ admits the basis \ref{basis_J_1}, $\styl_2$ admits the basis \ref{basis_J_2}, and $\styl_3$ admits the basis \ref{basis_J_3}.
\end{corollary}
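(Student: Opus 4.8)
The plan is to assemble this corollary from the two results just proved together with the facts on $J_k$ recorded in Subsection~\ref{subsection:identities_and_varieties}; the genuinely new content has already been established, so the proof is essentially bookkeeping.

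First I would combine the previous corollary, which gives $\Var(\styln) \subseteq \Var(J_n)$ (equivalently, $\styln$ satisfies every identity in $J_n$), with the previous theorem, which shows that every non-trivial identity satisfied by $\styln$ lies in $J_n$. Together these say that the equational theory of $\styln$ is exactly $J_n$, so by Birkhoff's theorem $\Var(\styln) = \Var(J_n)$. For the pseudovariety statement, note that $\styln$ is finite and satisfies $J_n$, so $\styln \in \vJ_n$ (recall $\vJ_n$ is precisely the class of finite monoids satisfying $J_n$), whence the pseudovariety generated by $\styln$ is contained in $\vJ_n$. For the reverse inclusion I would invoke the standard fact that a finite monoid lying in the variety generated by a finite monoid $M$ already lies in the pseudovariety generated by $M$: such a monoid is a homomorphic image of a finitely generated submonoid of a power $M^I$, and a finitely generated submonoid of $M^I$ embeds in a finite subpower of $M$ (collapse $I$ by the kernel of the finite tuple of generators). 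Since every monoid in $\vJ_n$ is finite and satisfies $J_n$, hence lies in $\Var(J_n) = \Var(\styln)$, it therefore lies in the pseudovariety generated by $\styln$; thus $\styln$ generates $\vJ_n$.

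Next, for the strict chain: from the above $\Var(\styln) = \Var(J_n)$ and $\Var(\styl_{n+1}) = \Var(J_{n+1})$, and since $J_{n+1} \subseteq J_n$ we get $\Var(\styln) \subseteq \Var(\styl_{n+1})$; were this an equality we would have $\vJ_n = \vJ_{n+1}$, contradicting the strictness of Simon's hierarchy $\vJ_1 \subsetneq \vJ_2 \subsetneq \cdots$ recorded in Subsection~\ref{subsection:identities_and_varieties}.

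Finally, for the finite basis assertion: a monoid is finitely based if and only if its equational theory is, so $\styln$ is finitely based if and only if $J_n$ is, and the listed facts on bases for $J_k$ in Subsection~\ref{subsection:identities_and_varieties} give that this holds exactly when $n \leq 3$, with the displayed bases \ref{basis_J_1}, \ref{basis_J_2} and \ref{basis_J_3} serving for $\styl_1$, $\styl_2$ and $\styl_3$ respectively. The only point requiring any care is the passage from ``generates $\Var(J_n)$'' to ``generates $\vJ_n$'', which relies on the finiteness argument above; every remaining step is immediate.
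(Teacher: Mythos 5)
Your proposal is correct and fills in exactly the bookkeeping the paper leaves implicit: the corollary is stated without proof as an immediate consequence of the preceding theorem (every identity of $\styln$ lies in $J_n$) and the earlier corollary ($\Var(\styln) \subseteq \Var(J_n)$), combined with the recorded facts about the bases of $J_k$ and the strictness of Simon's hierarchy. Your careful justification of the passage from generating $\Var(J_n)$ to generating the pseudovariety $\vJ_n$ (via the standard finite-subpower argument) is a welcome addition rather than a deviation.
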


The following is an immediate consequence of \cite[Section~3]{barker_et_al_scattered_factor_2020}:

\begin{corollary}
For each $n \in \N$, the identity checking problem for $\styln$ is decidable in linearithmic time. Therefore, $\textsc{Check-Id}(\styln)$ is in the complexity class $\mathsf{P}$.
\end{corollary}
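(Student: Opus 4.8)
The plan is to reformulate $\textsc{Check-Id}(\styln)$ as a purely word\chyph combinatorial problem and then quote an efficient algorithm for the latter. By Corollary~\ref{corollary:main_result} the equational theory of $\styln$ is exactly $J_n$, so for words $u,v$ over an alphabet of variables $\Sigma$ the monoid $\styln$ satisfies $u \approx v$ if and only if $u \approx v \in J_n$, that is, if and only if $u$ and $v$ have the same set of subsequences of length at most $n$ (equivalently, $u$ and $v$ are related by Simon's congruence $\sim_n$ on $\Sigma^*$). Since $n$ is fixed --- it is not part of the input to $\textsc{Check-Id}(\styln)$, only the identity $u \approx v$ is --- the running time is to be measured solely in $m := |u| + |v|$, with multiplicative constants allowed to depend on $n$.

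With this reduction in hand, the next step is to invoke \cite[Section~3]{barker_et_al_scattered_factor_2020}, where it is shown that, given a length bound, one can decide in linearithmic time whether two input words share the same set of subsequences up to that bound. Applying this with bound $n$ to the pair $(u,v)$ settles the instance $u \approx v$ of $\textsc{Check-Id}(\styln)$ in time $O(m \log m)$. As linearithmic time is in particular polynomial time, $\textsc{Check-Id}(\styln) \in \mathsf{P}$; this refines the standard observation that the identity checking problem of a finite monoid always lies in $\mathsf{coNP}$.

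The only point requiring care --- and what I expect to be the sole, and rather mild, obstacle --- is to extract the statement from \cite{barker_et_al_scattered_factor_2020} in precisely the form needed: the cited algorithm must decide equality of the sets of subsequences of length $\le n$ (equivalently, $\sim_n$\chyph equivalence), and not, say, $n$-subsequence-universality, nor equality of the sets of subsequences of length \emph{exactly} $n$ --- the last of these being strictly weaker, since, for instance, two words too short to admit any subsequence of length $n$ can still be $\sim_n$\chyph inequivalent. Given the reference this is a matter of bookkeeping rather than a genuine difficulty, and no further ideas beyond Corollary~\ref{corollary:main_result} and the quoted algorithm are needed.
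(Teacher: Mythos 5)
Your proposal is correct and follows essentially the same route as the paper: reduce the instance $u \approx v$ to testing membership in $J_n$ (i.e.\ Simon's congruence $\sim_n$) via the identification of the equational theory of $\styln$ with $J_n$, and then invoke the linearithmic algorithm from \cite[Section~3]{barker_et_al_scattered_factor_2020}. The only detail the paper makes explicit that you gloss over is that the alphabet of variables is part of the input, which is handled by observing that the number of variables occurring in an identity is bounded by its length.
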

\begin{proof}
Taking into account that the number of variables which occur in an identity is less than or equal to the length of the identity, we have that, by the observation after Theorem~8 in \cite[Section~3]{barker_et_al_scattered_factor_2020}, testing if two words over an alphabet of variables share the same subsequences of length $\leq n$ takes $O(l \log l)$ time, where $l$ is the sum of the lengths of both words.
\end{proof}

\begin{corollary}
$\Var(\styln)$ has uncountably many subvarieties, for $n \in \N$ such that $n \geq 3$.
\end{corollary}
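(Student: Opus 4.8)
The plan is to show that $\Var(\styl_n)$ contains a subvariety that itself has uncountably many subvarieties; since $\Var(\styl_n)$ for $n \geq 3$ equals $\Var(J_n)$ and the chain $\Var(J_3) \subseteq \Var(J_n)$ holds by Corollary~\ref{corollary:main_result}, it suffices to treat the case $n = 3$. So the first step is to reduce to establishing that $\Var(J_3)$ has uncountably many subvarieties, using monotonicity of the subvariety lattice under inclusion of varieties.

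Next, I would invoke the known fact that $\Var(J_3) = \vJ_3$ as a variety sits inside Simon's hierarchy, and that the lattice of subvarieties of the variety generated by a finite $\cJ$-trivial monoid can be uncountable. Concretely, the cleanest route is to cite the literature on the dot-depth hierarchy: Blanchet-Sadri's work (already referenced in the excerpt for the finite basis results) together with the fact, going back to the study of $J_k$ for $k \geq 3$, that $\Var(J_3)$ has a continuum of subvarieties — this is closely tied to the non-finite-basis phenomenon and the structure of pseudovarieties of level $3$. Alternatively, one can use that $\Var(J_n)$, being the variety generated by $U_{n+1}(\T)$, contains varieties generated by monoids known to have uncountably many subvarieties once $n \geq 3$; for instance, results on bands or on $\cJ$-trivial monoids of bounded piecewise complexity provide such a continuum. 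The key input is therefore a citation-backed statement: for $k \geq 3$, the variety $\Var(J_k)$ (equivalently, the equational class defined by $J_k$) has $2^{\aleph_0}$ subvarieties.

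The remaining step is purely formal: combining (i) the reduction to $n=3$, (ii) the cited continuum of subvarieties of $\Var(J_3)$, and (iii) the inclusion $\Var(J_3) \subseteq \Var(J_n) = \Var(\styl_n)$ for $n \geq 3$, one concludes that $\Var(\styl_n)$ has at least $2^{\aleph_0}$ subvarieties; since the number of subvarieties of any variety is at most $2^{\aleph_0}$ (each is determined by its equational theory, a subset of a countable set of identities over a countable variable alphabet), the count is exactly uncountable.

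The main obstacle is purely bibliographic rather than mathematical: one must locate the precise reference asserting that $\Var(J_3)$ — or a subvariety of it — has uncountably many subvarieties. The natural candidates are the Blanchet-Sadri papers already in the bibliography and Volkov's survey \cite{volkov_finitebasis}; the non-finite-basis result for $J_k$, $k \geq 4$, is a strong hint that the relevant continuum already appears at level $3$, since non-finitely based equational theories typically live inside varieties with very rich subvariety lattices. Once the reference is fixed, no further computation is needed.
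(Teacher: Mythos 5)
There is a genuine gap here: the entire mathematical content of your argument is deferred to a ``citation-backed statement'' that you admit you have not located, namely that $\Var(J_3)$ has a continuum of subvarieties. A proof whose key step is ``find a reference asserting the result'' is not a proof, and the heuristic you offer in its place --- that non-finitely based equational theories at level $4$ suggest a rich subvariety lattice already at level $3$ --- is not a valid inference (finite basability and the cardinality of the subvariety lattice are independent in general; indeed the paper notes that $\styl_3$ is finitely based yet generates a variety with uncountably many subvarieties). Your reduction to $n=3$ via $\Var(J_3) \subseteq \Var(J_n)$ and monotonicity of subvariety counts is correct but does not address the actual difficulty.

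The paper closes the gap with a short, concrete argument that you are missing: the word $xyx$ is an isoterm for the equational theory of $\styln$ when $n \geq 3$. This is immediate from the main theorem of the paper, since the equational theory is $J_n$ and no word other than $xyx$ itself shares with $xyx$ all subsequences of length $\leq 3$ (any other word over $\{x,y\}$ containing $xyx$ as a subsequence also contains one of $xxx$, $xxy$, $yxx$, $xyy$, $yxy$, $yyx$). One then applies Jackson's theorem \cite[Theorem~3.2]{jackson_uncountably_many_subvarieties_2000}, which states that any monoid variety for which $xyx$ is an isoterm has uncountably many subvarieties. So the correct ``external input'' is not a fact about $\Var(J_3)$ specifically but a general isoterm criterion; identifying the right isoterm and the right theorem is the idea your proposal lacks.
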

\begin{proof}
For any $n \in \N$ such that $n \geq 3$, the word $xyx$ is an isoterm for the equational theory of $\styln$, that is, there is no non-trivial identity $u \approx v$ satisfied by $\styln$, where $u$ or $v$ is the word $xyx$. Hence \cite[Theorem~3.2]{jackson_uncountably_many_subvarieties_2000} applies.
\end{proof}

As such, $\styl_3$ is the only stylic monoid which is simultaneously finitely based and which generates a variety with uncountably many subvarieties. Thus, it is finitely based but not \emph{hereditarily finitely based}, that is, not all of its subvarieties are finitely based. On the other hand, since $\styl_1$ and $\styl_2$ are monoids with a zero and five or less elements, they are hereditarily finitely based \cite{edmunds_lee_continuum}.

We now look at how the stylic monoids relate to other plactic-like monoids, namely the hypoplactic monoid $\hypo$ \cite{novelli_hypoplactic,cm_hypo_crystal}; the sylvester monoid $\sylv$ and the \#-sylvester monoid $\sylvh$ \cite{hivert_sylvester,cm_sylv_crystal}; the Baxter monoid $\baxt$ \cite{giraudo_baxter2,cm_sylv_crystal}; the stalactic monoid $\stal$ \cite{hnt_com_hopf,priez_binary_trees}; the taiga monoid $\taig$ \cite{priez_binary_trees}; the right patience-sorting monoid $\rPS$ \cite{cms_patience}. Let $\boldsymbol{\cC}$ denote the variety of all commutative monoids.  

\begin{corollary}
$\Var(\hypo)$ is the varietal join $\Var(\styl_2) \vee \boldsymbol{\cC}$, and is generated by $\styl_2$ and the free commutative monoid.
\end{corollary}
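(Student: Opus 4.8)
The plan is to pass from varieties to equational theories. By Birkhoff's theorem it suffices to show that $\hypo$ satisfies exactly the identities in $\mathrm{Eq}(\styl_2)\cap\mathrm{Eq}(\boldsymbol{\cC})$, since the equational theory of a join $\mathbf V_1\vee\mathbf V_2$ is $\mathrm{Eq}(\mathbf V_1)\cap\mathrm{Eq}(\mathbf V_2)$, and since $\boldsymbol{\cC}$ is generated by the free commutative monoid, so that the join is then generated by that monoid together with $\styl_2$. Now $\mathrm{Eq}(\styl_2)=J_2$ by Corollary~\ref{corollary:main_result}, while $\mathrm{Eq}(\boldsymbol{\cC})$ consists of the \emph{balanced} identities (those in which each variable occurs the same number of times on both sides). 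So the target is: \emph{$\hypo$ satisfies $u\approx v$ if and only if $u$ and $v$ have the same content and the same subsequences of length at most $2$}; equivalently, $\mathrm{Eq}(\hypo)=J_2\cap\{\text{balanced}\}$. Note that once the contents of $u$ and $v$ agree, the conditions from $J_2$ on subsequences of length $\leq 1$ and on $xx$ are automatic, so this intersection is exactly ``same content and same $2$-letter subwords''.

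First I would prove $\Var(\styl_2)\vee\boldsymbol{\cC}\subseteq\Var(\hypo)$ directly, without invoking the description of $\mathrm{Eq}(\hypo)$. The submonoid of $\hypo$ generated by a single letter is free monogenic, hence a copy of $\N$, which generates $\boldsymbol{\cC}$; so $\boldsymbol{\cC}\subseteq\Var(\hypo)$. Also $\Var(\hypo)=\Var(\hypo_2)$, because the hypoplactic monoids of rank at least $2$ all satisfy the same identities \cite{cm_identities} and $\hypo_1\cong\N$ embeds in $\hypo_2$. Finally, $\styl_2$ is a quotient of $\hypo_2$: it suffices that $\styl_2$ satisfy every defining relation of $\hypo_2$ over $\cA_2$, which holds for the Knuth relations since $\mathcal R_{\styl}\supseteq\mathcal R_{\plac}$, and for the remaining quartic hypoplactic relations because over a two-letter alphabet these essentially reduce to identifying $2121$ with $1212$, and $N$-insertion sends both of these words to the same tableau (the zero of $\styl_2$). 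Hence $\styl_2\in\Var(\hypo_2)=\Var(\hypo)$, and the inclusion follows.

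For the reverse inclusion I would invoke the known description of the identities of the hypoplactic monoid \cite{cm_identities,cain_malheiro_ribeiro_hypoplactic_2022}, namely that $\hypo$ satisfies $u\approx v$ precisely when $u$ and $v$ have the same content and the same $2$-letter subwords; together with the first paragraph this gives $\mathrm{Eq}(\hypo)=\mathrm{Eq}(\styl_2)\cap\mathrm{Eq}(\boldsymbol{\cC})$, so Birkhoff's theorem yields $\Var(\hypo)=\Var(\styl_2)\vee\boldsymbol{\cC}$, generated by $\styl_2$ and the free commutative monoid. If a self-contained argument is preferred, the part I expect to be the real obstacle is showing that $\hypo$ satisfies every balanced identity of $J_2$: given an evaluation sending each variable $x$ to a word $w_x$, one must check that the concatenation of the $w_x$ dictated by $u$ is hypoplactic-congruent to that dictated by $v$. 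Using the quasi-ribbon tableau model of $\equiv_\hypo$, this would reduce to comparing the content and the set of $2$-letter subwords of the two concatenations: the contents agree because $u$ and $v$ have the same content, and each $2$-letter subword of a concatenation either lies inside a single block $w_x$, contributing equally to $u$ and to $v$ since they have the same support, or straddles two blocks, in which case it is governed by the $2$-letter subwords of $u$, respectively of $v$, which agree by hypothesis.
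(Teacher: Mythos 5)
Your main line of argument is essentially the paper's: the paper's entire proof is the one-line citation of \cite[Corollary~4.4]{cain_malheiro_ribeiro_hypoplactic_2022} combined with Corollary~\ref{corollary:main_result}, which is exactly the combination you make (known description of the identities of $\hypo$ plus $\mathrm{Eq}(\styl_2)=J_2$), so the proposal is correct where it matters; your extra direct verification that $\boldsymbol{\cC}\subseteq\Var(\hypo)$ and that $\styl_2$ is a quotient of $\hypo_2$ is sound and is more detail than the paper gives.

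One caution about your optional ``self-contained'' sketch at the end: the word-level invariant of $\equiv_\hypo$ is \emph{not} ``same content and same set of $2$-letter scattered subwords''. For instance, with $a<b$ the words $bab$ and $bba$ are hypoplactic-congruent (already Knuth-equivalent), yet $ab$ is a subword of the first and not of the second. The correct invariant is content together with the set of \emph{inversions}, i.e.\ pairs $a<b$ such that some occurrence of $b$ precedes some occurrence of $a$; the identity-level statement (same content and same $2$-letter subsequences \emph{of variables}) is nevertheless correct, because variables carry no order and both orientations $xy$, $yx$ must be tracked. Your block-straddling argument goes through verbatim once ``$2$-letter subwords'' is replaced by ``inversions'', so this is a repairable slip in an optional aside rather than a gap in the proof.
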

\begin{proof}
A consequence of \cite[Corollary~4.4]{cain_malheiro_ribeiro_hypoplactic_2022}.
\end{proof}

Notice that $\Var(\styl_3) \not\subseteq \Var(\hypo)$, since $\styl_3$ does not satisfy the identity $xxyx \approx xyxx$: the left-hand side admits $xxy$ as a subsequence, but the right-hand side does not.

\begin{corollary}
$\Var(\sylv)$ and $\Var(\sylvh)$ both strictly contain the varietal join $\Var(\styl_2) \vee \boldsymbol{\cC}$.
\end{corollary}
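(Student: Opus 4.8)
The plan is to combine the previous corollary (that $\Var(\styl_2) \vee \boldsymbol{\cC}$ equals $\Var(\hypo)$) with two facts about the sylvester and \#-sylvester monoids: first, that $\hypo$ lies in both $\Var(\sylv)$ and $\Var(\sylvh)$, and second, that the containments are strict. For the first point, I would invoke the known relationship between the plactic-like monoids, namely that the hypoplactic monoid is a quotient of the sylvester (resp.\ \#-sylvester) monoid, or appeal directly to a result in \cite{cm_identities} or \cite{cain_johnson_kambites_malheiro_representations_plactic-like_2021} establishing that every identity satisfied by $\sylv$ (resp.\ $\sylvh$) is also satisfied by $\hypo$; equivalently, $\hypo \in \Var(\sylv)$ and $\hypo \in \Var(\sylvh)$. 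Since $\Var(\styl_2) \vee \boldsymbol{\cC} = \Var(\hypo)$ is the smallest variety containing $\hypo$, and since $\sylv$ and $\sylvh$ are plainly commutative-containing in the sense that $\boldsymbol{\cC}$ embeds (the one-generator submonoid, or the image under collapsing, gives free commutative pieces), the join is contained in each of $\Var(\sylv)$, $\Var(\sylvh)$.

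For strictness, the cleanest route is to exhibit an identity satisfied by $\hypo$ (hence by $\Var(\styl_2) \vee \boldsymbol{\cC}$) that fails in $\sylv$ and in $\sylvh$. The hypoplactic monoid satisfies the identity $xyxy \approx yxyx$ (it is ``almost commutative'' in a strong sense — any two words with the same content and the same pairs of consecutive-in-content letters are equal), and it is known (see \cite{cm_identities}) that this identity, or one of similar shape, separates $\hypo$ from $\sylv$ and $\sylvh$. Concretely, I would check that the sylvester monoid does not satisfy $xyxy \approx yxyx$: evaluating $x,y$ to suitable single letters of $\cA_2$ and computing the right (resp.\ left) binary search trees of $1212$ and $2121$ shows they differ. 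The same computation, or its mirror, handles $\sylvh$. This gives elements of $\Var(\sylv)$ and $\Var(\sylvh)$ outside $\Var(\hypo) = \Var(\styl_2) \vee \boldsymbol{\cC}$, establishing the strict containment in both cases.

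The main obstacle is pinning down exactly which identity does the separating and verifying it cleanly. One must be careful: an identity like $xyxy \approx yxyx$ is satisfied by $\hypo$ but I should double-check it is genuinely violated by $\sylv$ — the sylvester monoid satisfies $\Var(\sylv) \subseteq \Var(J)$-type identities only in limited form, and the relevant non-identity is that $\sylv$ is not contained in any ``plactic-like commutative-join'' smaller than itself. If $xyxy \approx yxyx$ turns out to hold in $\sylv$, the fallback is to use a longer identity witnessing that $\sylv$ has strictly more structure — for instance an identity of the form $(xy)^k \approx (yx)^k$ failing for all $k$, which reflects that $\sylv$ does not satisfy any nontrivial ``balanced'' identity of that kind while $\hypo$ does satisfy many. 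In practice I expect the literature (\cite{cm_identities,cain_johnson_kambites_malheiro_representations_plactic-like_2021}) already records a suitable separating identity, so the proof reduces to a citation plus the observation that $\Var(\styl_2) \vee \boldsymbol{\cC} = \Var(\hypo)$ from the preceding corollary; I would phrase it as ``A consequence of \cite[\dots]{cm_identities} together with the previous corollary,'' supplying the explicit separating evaluation only if needed for self-containedness.
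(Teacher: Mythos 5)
Your proposal is correct and follows essentially the same route as the paper: identify $\Var(\styl_2) \vee \boldsymbol{\cC}$ with $\Var(\hypo)$ via the preceding corollary, then invoke the known strict containments $\Var(\hypo) \subsetneq \Var(\sylv)$ and $\Var(\hypo) \subsetneq \Var(\sylvh)$, which the paper simply cites from \cite{cain_malheiro_ribeiro_sylvester_baxter_2021}. Your candidate separating identity does work ($\hypo$ satisfies $xyxy \approx yxyx$ while $1212 \not\equiv_{\sylv} 2121$), so the extra detail you supply is a valid, slightly more self-contained version of the same argument.
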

\begin{proof}
A consequence of the previous corollary, since $\Var(\sylv)$ and $\Var(\sylvh)$ both strictly contain $\Var(\hypo)$ (see \cite{cain_malheiro_ribeiro_sylvester_baxter_2021} for details).
\end{proof}

Notice that $\Var(\styl_3) \not\subseteq \Var(\sylv)$, since $\styl_3$ does not satisfy the identity $xyxy \approx yxxy$: the left-hand side admits $xyx$ as a subsequence, but the right-hand side does not. Similarly, $\Var(\styl_3) \not\subseteq \Var(\sylvh)$, since $\styl_3$ does not satisfy the identity $yxyx \approx yxxy$.

\begin{corollary}
$\Var(\baxt)$ strictly contains the varietal join $\Var(\styl_3) \vee \boldsymbol{\cC}$.
\end{corollary}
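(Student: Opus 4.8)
The plan is to mirror the structure used for the hypoplactic and sylvester monoids: first establish the inclusion $\Var(\styl_3) \vee \boldsymbol{\cC} \subseteq \Var(\baxt)$, then exhibit a witness identity separating the two varieties to get strictness. For the inclusion, I would quote the known description of the equational theory of the Baxter monoid from the literature on Baxter identities (the reference \cite{cain_malheiro_ribeiro_sylvester_baxter_2021}), which identifies $\Var(\baxt)$ and relates it to the varieties generated by $\sylv$, $\sylvh$ and their join with $\boldsymbol{\cC}$. Since the Baxter monoid surjects onto (or contains relevant submonoids related to) the sylvester and $\#$-sylvester monoids, and since $\Var(\sylv) \vee \Var(\sylvh)$ already strictly contains $\Var(\hypo) = \Var(\styl_2) \vee \boldsymbol{\cC}$ by the previous corollaries, the main task is to see that $\Var(\styl_3)$ — not merely $\Var(\styl_2)$ — embeds into $\Var(\baxt)$. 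Concretely I would show $\baxt$ satisfies every identity in $J_3$, which by Corollary~\ref{corollary:main_result} is the equational theory of $\styl_3$; this can be read off from the list of Baxter identities in \cite{cain_malheiro_ribeiro_sylvester_baxter_2021}, checking that each of the four generators of $J_3$ listed in item~\ref{basis_J_3} is a consequence of the Baxter relations (equivalently, that $\baxt$ does not distinguish words with the same length-$\leq 3$ subsequences). Combined with the trivial fact that $\boldsymbol{\cC} \subseteq \Var(\baxt)$ (the Baxter monoid has commutative homomorphic images, e.g. via abelianisation of content), this yields $\Var(\styl_3) \vee \boldsymbol{\cC} \subseteq \Var(\baxt)$.

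For strictness, I would produce an identity satisfied by $\styl_3$ and by all commutative monoids — hence by the join — but failed by $\baxt$. A natural candidate in the spirit of the remarks following the hypoplactic and sylvester corollaries is an identity $u \approx v$ where $u$ and $v$ have the same multiset of letters and the same subsequences of length $\leq 3$ (so it lies in $J_3$ and is satisfied by commutative monoids), yet fails in $\baxt$ because the Baxter insertion algorithm distinguishes them. One expects a short such pair built from two variables $x, y$ with one variable occurring three or four times; for instance something of the form $xyxxy \approx xyxyx$ type or a length-$6$ balanced pair — the precise witness is whatever short identity appears as the separating example in \cite{cain_malheiro_ribeiro_sylvester_baxter_2021} establishing that $\Var(\baxt)$ strictly exceeds the relevant join. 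I would verify directly that the chosen $u, v$ share all subsequences of length $\leq 3$ (a finite check), so $u \approx v \in J_3$ and is satisfied by $\styl_3$ and by $\boldsymbol{\cC}$, and then exhibit a concrete evaluation into $\baxt$ (a pair of words over a small alphabet) on which $u$ and $v$ give different pairs of twin binary search trees.

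The main obstacle is the strictness half: one must pin down a correct, genuinely short separating identity and confirm both that it belongs to $J_3$ and that it fails in $\baxt$. The $J_3$-membership is a purely combinatorial verification over the subsequences of length $\leq 3$, so it is routine once the pair is fixed; the delicate point is choosing a balanced pair (equal content is needed for the commutative side, and equal subsequence sets up to length $3$ is needed for the $\styl_3$ side) that nonetheless the Baxter monoid separates — this is exactly the kind of fine-grained failure that the papers on Baxter identities are built to supply, so in practice I would lean on \cite{cain_malheiro_ribeiro_sylvester_baxter_2021} for both the inclusion description and the witness, reducing the proof to: (i) cite the structural description of $\Var(\baxt)$; (ii) observe $\boldsymbol{\cC} \subseteq \Var(\baxt)$ and $J_3 \subseteq$ the Baxter equational theory, giving the inclusion; (iii) display the witness identity and check it lies in $J_3$ but is refuted by an explicit evaluation in $\baxt$, giving strictness.
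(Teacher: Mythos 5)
Your argument for the inclusion $\Var(\styl_3) \vee \boldsymbol{\cC} \subseteq \Var(\baxt)$ runs in the wrong direction. To show $\Var(\styl_3) \subseteq \Var(\baxt)$ you must show $\styl_3 \in \Var(\baxt)$, i.e.\ that $\styl_3$ satisfies every identity satisfied by $\baxt$; what you propose instead is to show that $\baxt$ satisfies every identity in $J_3$, which would prove the reverse inclusion $\Var(\baxt) \subseteq \Var(J_3) = \Var(\styl_3)$ --- and is in any case false. If $\baxt$ satisfied all of $J_3$ (for instance $(xy)^3 \approx (yx)^3$), then $\Var(\baxt)$ would be contained in a variety generated by a finite monoid, which contradicts precisely the fact needed for strictness. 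The correct (and the paper's) route is the opposite one: $\Var(\baxt)$ has a known finite basis, namely the two identities $xzyt \, xy \, rxsy \approx xzyt \, yx \, rxsy$ and $xzyt \, xy \, rysx \approx xzyt \, yx \, rysx$ from \cite[Theorem~4.18]{cain_malheiro_ribeiro_sylvester_baxter_2021}, and one checks directly that $\styl_3$ satisfies these two identities (both sides share the same subsequences of length $\leq 3$), whence $\styl_3 \in \Var(\baxt)$; together with $\boldsymbol{\cC} \subseteq \Var(\baxt)$ this gives the inclusion of the join.

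For strictness your direction is correct --- one wants an identity holding in both $\styl_3$ and $\boldsymbol{\cC}$ (same content and same subsequences of length $\leq 3$ on each side) that fails in $\baxt$ --- but you never actually produce such a witness: your candidate $xyxxy \approx xyxyx$ is not even in $J_3$ (only the right-hand side admits $y,y,x$ as a subsequence), and you ultimately defer the entire step to the literature without identifying the needed statement. The paper sidesteps the search for a witness altogether by invoking \cite[Proposition~6.12]{cain_johnson_kambites_malheiro_representations_plactic-like_2021}, which says that $\Var(\baxt)$ is not contained in the varietal join of $\boldsymbol{\cC}$ with \emph{any} variety generated by a finite monoid; since $\styl_3$ is finite, strictness is immediate. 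As written, your proposal establishes neither half of the statement.
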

\begin{proof}
It is easy to see that $\styl_3$ satisfies the following identities, which form a basis for $\Var(\baxt)$ \cite[Theorem~4.18]{cain_malheiro_ribeiro_sylvester_baxter_2021}:
\begin{align*}
	xzyt \; xy \; rxsy \approx xzyt \; yx \; rxsy; \\
	xzyt \; xy \; rysx \approx xzyt \; yx \; rysx.
\end{align*}
Therefore, $\styl_3 \in \Var(\baxt)$. On the other hand, by \cite[Proposition~6.12]{cain_johnson_kambites_malheiro_representations_plactic-like_2021}, $\Var(\baxt)$ is not contained in the varietal join of $\boldsymbol{\cC}$ and any variety generated by a finite monoid.
\end{proof}

Notice that $\Var(\styl_4) \not\subseteq \Var(\baxt)$, since $\styl_4$ does not satisfy the identity $xzytxyrxsy \approx xzytyxrxsy$: the left-hand side admits $xxyx$ as a subsequence, but the right-hand side does not.

\begin{corollary}
$\Var(\plac_2)$ strictly contains the varietal join $\Var(\styl_3) \vee \boldsymbol{\cC}$.
\end{corollary}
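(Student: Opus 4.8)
The plan is to follow the pattern of the preceding corollary for $\baxt$: prove the inclusion $\Var(\styl_3)\vee\boldsymbol{\cC}\subseteq\Var(\plac_2)$, and then show it is strict. For the inclusion it suffices to treat the two generating pieces of the join separately. The commutative part is immediate: the submonoid of $\plac_2$ generated by the letter $1$ consists of the pairwise distinct classes $[1^k]_{\plac}$, hence is a copy of the free monogenic monoid $\plac_1$, which generates $\boldsymbol{\cC}$; thus $\boldsymbol{\cC}=\Var(\plac_1)\subseteq\Var(\plac_2)$. It remains to show $\styl_3\in\Var(\plac_2)$.

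By Corollary~\ref{corollary:main_result}, $\mathrm{Id}(\styl_3)=J_3$, so $\styl_3\in\Var(\plac_2)$ is equivalent to $\mathrm{Id}(\plac_2)\subseteq J_3$, i.e.\ to the assertion that every identity satisfied by $\plac_2$ has the same subsequences of length $\le 3$ on both sides. In keeping with the style of the paper I would establish this by verifying directly that $\styl_3$ satisfies a known finite basis of $\mathrm{Id}(\plac_2)$; since $\mathrm{Id}(\styl_3)=J_3$, this reduces to checking that each identity in the basis is balanced on subsequences of length $\le 3$, which is routine. For a self-contained argument one can instead mimic the theorem that computes $\mathrm{Id}(\styl_n)$ and argue by contraposition: given $u\approx v\notin J_3$, pick a word $w$ of minimal length $\ell\in\{1,2,3\}$ that is a subsequence of $u$ but not of $v$, so that $u$ and $v$ share all subsequences of length $<\ell$; this constrains the projection of $v$ onto the variables occurring in $w$ tightly enough to build an evaluation $\phi\colon\Sigma^*\to\cA_2^*$ with $\pplac{\phi(u)}\ne\pplac{\phi(v)}$, using the content and the shape (the two row lengths) of the plactic tableau as separating invariants. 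The cases $\ell\le 2$ are short: for $\ell=1$ the supports already differ, and for $\ell=2$ one sends the two relevant variables to $2$ and $1$ (and everything else to $\varepsilon$), making $\phi(v)$ weakly increasing while $\phi(u)$ is not.

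The hard part will be the case $\ell=3$: because $\cA_2$ has only two letters, a length-three separating subsequence cannot be exhibited as a strictly decreasing pattern, so for each pattern of $w$ (namely $x^3$; $x^2y$ and $xy^2$; $xyx$ and $yxy$; $xyz$) one must choose the images of the variables so that the length of the second row of $\pplac{\phi(u)}$ differs from that of $\pplac{\phi(v)}$. Carrying out this pattern-by-pattern construction — or, equivalently, pinning down a usable basis of $\mathrm{Id}(\plac_2)$ and verifying it lies in $J_3$ — is the main obstacle of the proof.

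Finally, for strictness it is enough to find one identity that holds throughout $\Var(\styl_3)\vee\boldsymbol{\cC}$ but fails in $\plac_2$. The identity $(xy)^3\approx(yx)^3$ does the job: each of $xyxyxy$ and $yxyxyx$ contains every word of length $\le 3$ over $\{x,y\}$ as a subsequence, so $(xy)^3\approx(yx)^3\in J_3=\mathrm{Id}(\styl_3)$, and since it is content-preserving it holds in every commutative monoid; hence it holds throughout the join. But $\plac_2$ fails it, since $\pplac{121212}$ and $\pplac{212121}$ have shapes $(4,2)$ and $(3,3)$. (Alternatively, one may argue as in the $\baxt$ case that $\Var(\plac_2)$ is not contained in the varietal join of $\boldsymbol{\cC}$ with any variety generated by a finite monoid.) Combining the inclusion with strictness yields the corollary.
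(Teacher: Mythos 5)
Your overall architecture (inclusion of the join, then strictness) is right, and your strictness argument is actually complete and nicely explicit: $(xy)^3\approx(yx)^3$ lies in $J_3$ (both sides contain all eight length-$3$ words over $\{x,y\}$ as subsequences) and is balanced, so it holds throughout $\Var(\styl_3)\vee\boldsymbol{\cC}$, while $\pplac{121212}$ and $\pplac{212121}$ have shapes $(4,2)$ and $(3,3)$. This is more self-contained than the paper's route, which instead invokes the fact that $\Var(\baxt)$ is not contained in the join of $\boldsymbol{\cC}$ with any variety generated by a finite monoid. The embedding of $\plac_1$ giving $\boldsymbol{\cC}\subseteq\Var(\plac_2)$ is also fine.

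The genuine gap is the containment $\styl_3\in\Var(\plac_2)$, which you correctly reduce to showing that every identity satisfied by $\plac_2$ lies in $J_3$ but then do not establish: you explicitly leave the $\ell=3$ case of your contrapositive argument as ``the main obstacle,'' and your alternative plan (``verify a known finite basis of the equational theory of $\plac_2$'') is not carried out because no such basis is identified. This half of the statement is a real theorem, not a routine check, and as written your proof does not contain it. The paper closes this gap without any new case analysis: the immediately preceding corollary already verifies that $\styl_3$ satisfies the two identities forming a basis for $\Var(\baxt)$ (namely $xzyt\,xy\,rxsy\approx xzyt\,yx\,rxsy$ and $xzyt\,xy\,rysx\approx xzyt\,yx\,rysx$, each checked by comparing length-$\le 3$ subsequences), so $\Var(\styl_3)\vee\boldsymbol{\cC}\subseteq\Var(\baxt)$; the cited result \cite[Corollary~4.15]{cain_malheiro_ribeiro_sylvester_baxter_2021} then gives that $\plac_2$ and $\baxt$ satisfy the same identities, whence $\Var(\baxt)\subseteq\Var(\plac_2)$. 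If you want to keep your argument self-contained, you would either have to import that basis for the equational theory of $\plac_2$ and check it against $J_3$, or genuinely complete the pattern-by-pattern construction of separating evaluations into $\cA_2^*$; until one of these is done, the inclusion half of the corollary is unproved.
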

\begin{proof}
A consequence of the previous corollary and \cite[Corollary~4.15]{cain_malheiro_ribeiro_sylvester_baxter_2021} or \cite[Proposition~6.12]{cain_johnson_kambites_malheiro_representations_plactic-like_2021}.
\end{proof}

Notice that $\styl_6 \notin \Var(\plac_2)$, since $\styl_6$ does not satisfy Adjan's identity $xyyx \; xy \; xyyx \approx xyyx \; yx \; xyyx$: the left-hand side admits $xxxyyy$ as a subsequence, but the right-hand side does not. On the other hand, both $\styl_4$ and $\styl_5$ satisfy Adjan's identity, but it is still unclear if $\styl_4$ or $\styl_5$ are in $\Var(\plac_2)$. On a more general note, we have the following upper bound on whether a stylic monoid of finite rank is in the variety generated by a plactic monoid of finite rank:

\begin{corollary}
For any $n \in \N$, $\Var(\placn)$ does not contain $\styl_d$, where $d = 2(2^{\lfloor n^2/4 \rfloor } + \lfloor n^2/4 \rfloor)$.
\end{corollary}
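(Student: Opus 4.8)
The plan is to produce a single identity satisfied by $\placn$ that fails in $\styl_d$. By Corollary~\ref{corollary:main_result} the equational theory of $\styl_d$ is exactly $J_d$, so an identity $u \approx v$ fails in $\styl_d$ precisely when $u \approx v \notin J_d$, i.e.\ when the two sides differ on some scattered subsequence of length at most $d$. Hence it suffices to exhibit an identity satisfied by $\placn$ whose two sides differ on a subsequence of length $\leq d$: then $\mathrm{Id}(\placn) \not\subseteq J_d = \mathrm{Id}(\styl_d)$, so $\styl_d \notin \Var(\placn)$ (and, a fortiori, $\styl_{d'} \notin \Var(\placn)$ for every $d' \geq d$, matching the fact that the statement records just one value of $d$).

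First I would invoke the tropical representation of the plactic monoid: by Johnson and Kambites \cite{johnson_kambites_tropical_plactic} (see also \cite{cain_johnson_kambites_malheiro_representations_plactic-like_2021}), $\placn$ embeds into $UT_m(\T)$ with $m = \floor{n^2/4}+1$. Consequently every identity of $UT_m(\T)$ is satisfied by $\placn$, and the problem reduces to producing a non-trivial identity of $UT_m(\T)$ that is not in $J_d$, where $d = 2\bigl(2^{m-1}+(m-1)\bigr) = 2\bigl(2^{\floor{n^2/4}}+\floor{n^2/4}\bigr)$.

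To that end I would use the Adjan-type identities of the tropical triangular matrix monoids (cf.\ \cite{izhakian_identities,okninski_identities,taylor2017upper}): there is a non-trivial two-variable identity $u_m \approx v_m$ satisfied by $UT_m(\T)$, built from a ``guard'' word $g_m$ over $\{x,y\}$ in a nested, roughly length-doubling fashion, such that $u_m$ and $v_m$ have the same content and differ exactly on the subsequence $x^{c_m} y^{c_m}$ with $c_m = 2^{m-1}+(m-1)$, a subsequence of length $2c_m = d$. For $m = 2$ this is precisely Adjan's identity $xyyx\,xy\,xyyx \approx xyyx\,yx\,xyyx$, whose sides differ on $xxxyyy$ but on no shorter subsequence, recovering the value $d = 6$ for $n = 2$ noted before this corollary. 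Since $u_m \approx v_m$ holds in $UT_m(\T)$ it holds in $\placn$, and since $u_m$ and $v_m$ differ on a length-$d$ subsequence we get $u_m \approx v_m \notin J_d$; therefore $\styl_d \notin \Var(\placn)$.

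The main obstacle is the third step: extracting from the literature, or constructing by induction on $m$, the precise guard words $g_m$ and checking both that $u_m \approx v_m$ is genuinely an identity of $UT_m(\T)$ — which for $m \geq 3$ requires $g_m$ to be ``saturating'' enough that every maximal weighted path through the $m$-state graph of a $UT_m(\T)$-evaluation avoids the swapped middle block — and that the first content discrepancy between $u_m$ and $v_m$ occurs at subsequence-length exactly $2(2^{m-1}+m-1)$. The exponential growth of these guard words is what yields the $2^{\floor{n^2/4}}$ term, while the additive $\floor{n^2/4}$ correction reflects that a maximal path in $UT_m(\T)$ performs up to $m-1$ ascents; one should also remark that this bound is almost certainly far from optimal. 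A secondary, purely bookkeeping point is to record the exact dimension $m = \floor{n^2/4}+1$ of the Johnson--Kambites representation (it is here that the quantity $\floor{n^2/4}$ first enters).
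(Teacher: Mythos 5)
Your proposal follows the paper's argument essentially step for step: reduce via the Johnson--Kambites embedding to $UT_m(\T)$ with $m=\lfloor n^2/4\rfloor+1$, exhibit a two-variable identity of $UT_m(\T)$ whose two sides first disagree on a scattered subsequence of length $d=2(2^{m-1}+m-1)$, and conclude via the characterization $\mathrm{Id}(\styl_d)=J_d$. The ``main obstacle'' you flag is resolved in the paper simply by quoting Taylor's explicit construction \cite[Section~3.2]{taylor2017upper} --- the guard word $w$ is the non-cyclic de Bruijn sequence $\tilde{B}(2,m-1)$ with its two letters replaced by $xy$ and $yx$, and the identity is $w\,xy\,w\approx w\,yx\,w$ --- so no inductive construction or re-verification that the identity holds in $UT_m(\T)$ is carried out there either; only the subsequence count is checked.
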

\begin{proof}
Let $\tilde{B}{(2,k-1)}$ be a non-cyclic de Bruijn sequence of order $k-1$ on a size-$2$ alphabet \cite{de_bruijn_sequence}, that is, a word of shortest length in which every possible word of length $k-1$ over a size-$2$ alphabet occurs exactly once as a subsequence. Let $w$ be obtained from $\tilde{B}{(2,k-1)}$ by replacing the first variable by $xy$ and the second by $yx$. The identity 
\[
w \; xy \; w \approx w \; yx \; w
\]
is satisfied by ${UT_k}{(\T)}$ \cite[Section~3.2]{taylor2017upper}. 

Notice that, since a non-cyclic de Bruijn sequence of order $k-1$ on a size-$2$ alphabet is of length $2^{k-1} + k-1$, then $w$ has exactly $2^{k-1} + k-1$ occurrences of each variable. Then, the left-hand side of the identity admits the subsequence $x^{2^{k-1} + k-1} y^{2^{k-1} + n-1}$, but the right-hand side does not. As such, this identity is not satisfied by $\styl_p$, where $p = 2(2^{k-1} + k-1)$. 

Since $\Var(UT_q{(\T)})$ contains $\Var(\placn)$ \cite[Corollary~3.3]{johnson_kambites_tropical_plactic}, where $q = \lfloor n^2/4 \rfloor + 1$, we have that $\Var(\placn)$ does not contain $\styl_d$, where $d = 2(2^{\lfloor n^2/4 \rfloor } + \lfloor n^2/4 \rfloor)$.
\end{proof}

\begin{corollary}
$\Var(\stal) = \Var(\taig)$ and $\Var(\styl_2)$ are incomparable, with respect to inclusion.
\end{corollary}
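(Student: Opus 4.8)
The plan is to prove the two non-inclusions separately, in each case using the key fact from Corollary~\ref{corollary:main_result} that the equational theory of $\styl_2$ is precisely $J_2$. Since $\Var(\stal) = \Var(\taig)$ is known \cite{cm_identities}, it suffices to show that $\Var(\stal)$ and $\Var(\styl_2)$ are incomparable with respect to inclusion (and the argument below works verbatim with $\taig$ in place of $\stal$, so that equality is not heavily load-bearing).

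To see that $\Var(\stal) \not\subseteq \Var(\styl_2)$, I would produce an identity of $J_2$ that $\stal$ fails. The identity $(xy)^2 \approx (yx)^2$ belongs to the basis \ref{basis_J_2} of $J_2$, hence is satisfied by $\styl_2$; however, evaluating $x$ and $y$ at two distinct letters $a, b$ of the alphabet, the stalactic tableau of $abab$ has the $a$-column to the left of the $b$-column, while that of $baba$ has these two columns in the reverse order, so $\stal$ does not satisfy $(xy)^2 \approx (yx)^2$. Hence $\stal \notin \Var(\styl_2)$. (Alternatively, $\stal$ contains a free monogenic submonoid and so cannot satisfy any identity, such as $xyxzx \approx xyzx \in J_2$, in which some variable occurs with a different multiplicity on the two sides.)

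To see that $\Var(\styl_2) \not\subseteq \Var(\stal)$, I would use an identity satisfied by $\stal$ that $\styl_2$ fails. By \cite{cm_identities} --- or by a direct computation with stalactic tableaux --- the stalactic monoid, and therefore also the taiga monoid, satisfies $xyx \approx x^2 y$. This identity is not in $J_2$, since $yx$ occurs as a subsequence of $xyx$ but not of $x^2 y$; because the equational theory of $\styl_2$ is $J_2$, the monoid $\styl_2$ does not satisfy it (explicitly, evaluating $x \mapsto 1$ and $y \mapsto 2$ yields $N(121) \neq N(112) = N(12)$). Thus $\styl_2 \notin \Var(\stal)$, and together with the previous paragraph this establishes the incomparability.

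I do not anticipate a genuine obstacle: each half reduces to a single, easily checked evaluation, once the identification $\Var(\styl_2) = \Var(J_2)$ from Corollary~\ref{corollary:main_result} is in hand. The only points that warrant care are verifying that the chosen member of $J_2$ really does fail in $\stal$ --- hence my preference to check it explicitly against the first-occurrence column ordering of stalactic tableaux --- and pinning down the identity $xyx \approx x^2 y$ for $\stal$, which I would take from \cite{cm_identities} or derive directly from the combinatorial model of the stalactic monoid.
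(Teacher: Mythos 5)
Your proof takes essentially the same route as the paper's: the paper likewise observes that $\stal$ fails the $J_2$-identity $xyxy \approx yxyx$ (your $(xy)^2 \approx (yx)^2$) while $\styl_2$ fails an identity of the form $xyx \approx \cdots$ that $\stal$ satisfies. The one detail to pin down is the orientation of that second identity: the paper uses $xyx \approx yxx$, whereas you use the mirror image $xyx \approx x^2y$, and $\stal$ satisfies exactly one of these two (which one depends on whether the stalactic insertion of \cite{hnt_com_hopf,priez_binary_trees,cm_identities} reads words left-to-right, ordering columns by first occurrence, or right-to-left, ordering them by last occurrence). Since both mirror images lie outside $J_2$ (one destroys the subsequence $xy$, the other $yx$) and hence both fail in $\styl_2$, your argument goes through verbatim once the identity is matched to the convention actually used in the cited sources; as written, though, you should verify that $aba \equiv_{\stal} aab$ rather than $aba \equiv_{\stal} baa$ before relying on $xyx \approx x^2y$.
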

\begin{proof}
It is easy to see that $\styl_2$ does not satisfy $xyx \approx yxx$, and that $\stal$ does not satisfy $xyxy \approx yxyx$.
\end{proof}

\begin{corollary}
$\Var(\rPSn)$ does not contain $\styl_{n+2}$, for any $n \in \N$.
\end{corollary}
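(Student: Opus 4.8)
The plan follows the template of the preceding corollaries. By Corollary~\ref{corollary:main_result} the equational theory of $\styl_{n+2}$ is exactly $J_{n+2}$; equivalently, $\styl_{n+2}$ satisfies an identity $u \approx v$ if and only if $u$ and $v$ possess the same subsequences of length at most $n+2$. Hence $\styl_{n+2} \in \Var(\rPSn)$ would force the equational theory of $\rPSn$ to be contained in $J_{n+2}$, so it suffices to produce a single identity satisfied by $\rPSn$ that does not lie in $J_{n+2}$.

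First I would invoke \cite{cm_identities} (see also \cite{cms_patience}) for an explicit identity $\sigma_n \colon u_n \approx v_n$ satisfied by $\rPSn$; to keep the argument self-contained one can instead re-derive a suitable $\sigma_n$ directly from the right patience-sorting insertion, under which a word is read left to right, each letter being appended at the foot of the leftmost strictly decreasing column it still extends (a fresh column being opened on the far right otherwise), and two words are $\rPSn$-congruent precisely when they produce the same family of columns. One observation constrains the shape of $\sigma_n$ from the outset: since $\rPS_1 \cong \N$ embeds in $\rPSn$, every identity of $\rPSn$ is content-preserving, so $u_n$ and $v_n$ must have equal content; the task is then to choose $u_n$ and $v_n$ that additionally induce the same family of columns under every evaluation into $\cA_n^*$, while differing in some scattered subword of length $n+2$, so that $\sigma_n \notin J_{n+2}$.

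The remaining step is then purely formal, and would be written exactly as for Adjan's identity $xyyx\,xy\,xyyx \approx xyyx\,yx\,xyyx$ in the plactic case above: one exhibits the distinguishing length-$(n+2)$ subsequence of $u_n$ absent from $v_n$, deduces $\sigma_n \notin J_{n+2}$, and concludes. As $\rPSn$ satisfies $\sigma_n$ while $\styl_{n+2}$ (whose equational theory is $J_{n+2}$) does not, we obtain $\styl_{n+2} \notin \Var(\rPSn)$, as required.

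I expect the construction of $\sigma_n$ to be the genuine difficulty. In contrast with the plactic monoid, $\rPSn$ is quite rigid: already $\rPS_2$ satisfies none of the short candidates $xyxy \approx yxyx$, $xyyx \approx yxxy$ or $xyxyxy \approx yxyxyx$, and even an identity such as $xyxy \approx xyyx$, whose two sides agree under every single-letter evaluation, fails for longer substitutions. Consequently $\sigma_n$ must itself be comparatively long, and the delicate point is to arrange the discrepancy between its two sides so that it is already visible among subsequences of length $n+2$, while checking that $\rPSn$ still satisfies the resulting identity — which is exactly where the combinatorics of right patience sorting has to be used with care.
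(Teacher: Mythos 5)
Your overall strategy is exactly the paper's: exhibit an identity satisfied by $\rPSn$ whose two sides differ on some subsequence of length $n+2$, so that the identity lies outside $J_{n+2}$ and hence, since the equational theory of $\styl_{n+2}$ is precisely $J_{n+2}$ by Corollary~\ref{corollary:main_result}, cannot be satisfied by $\styl_{n+2}$. The reduction is sound. But the proof is not complete, because the one ingredient that actually carries the argument --- the explicit identity $\sigma_n$ --- is never produced. You explicitly defer it (``I expect the construction of $\sigma_n$ to be the genuine difficulty''), and everything after that deferral is conditional on an object you have not constructed. A proof that says ``there should exist an identity with the required properties'' establishes nothing; in particular, nothing you write rules out the a priori possibility that every identity of $\rPSn$ happens to lie in $J_{n+2}$, which is precisely the point at issue.

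The missing ingredient is already in the literature and is what the paper uses: by \cite[Proposition~4.7]{cms_patience}, $\rPSn$ satisfies $(xy)^{n+1} \approx (xy)^n yx$. The left-hand side admits $x^{n+1}y$ (a word of length $n+2$) as a subsequence --- take all $n+1$ occurrences of $x$ and the final $y$ --- whereas the right-hand side does not, since its last occurrence of $x$ is its final letter, so at most $n$ occurrences of $x$ precede any $y$. Hence the identity is not in $J_{n+2}$ and the corollary follows. Your surrounding remarks (content-preservation via $\rPS_1$, the rigidity of $\rPS_2$ under short candidate identities) are reasonable heuristics for why $\sigma_n$ must be somewhat long, but they are no substitute for exhibiting it; with the citation above the entire proof is three lines.
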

\begin{proof}
By \cite[Proposition~4.7]{cms_patience}, for any $n \in \N$, the right patience-sorting monoid of rank $n$ satisfies the identity $(xy)^{n+1} \approx (xy)^n yx$. The left-hand side of this identity admits $x^{n+1} y$ as a subsequence, but the right-hand side does not. Hence, this identity is not satisfied by $\styl_{n+2}$, and the result follows.
\end{proof}

\section{The finite basis problem for the stylic monoid with involution}
\label{section:The_finite_basis_problem_for_the_stylic_monoid_with_involution}

\par Given a semigroup $S$, an \emph{involution} on $S$ is a unary operation \textup{*} on $S$ such that $(x^*)^* = x$ and $(xy)^* = y^*x^*$. An \emph{involution semigroup} is a semigroup together with an involution, denoted $(S,\textup{*})$. Given an involution semigroup $(S,\textup{*})$, we say the \emph{semigroup reduct} of $(S,\textup{*})$ is the underlying semigroup $S$.

\par The definitions of \emph{involution semigroup variety}, \emph{finitely based involution semigroup}, \emph{identities satisfied by involution semigroups}, and their corresponding involution monoid definitions are defined in an analogous way to the ones given for monoids in Section \ref{section:background}.

\par The unique order-reversing permutation on a finite ordered alphabet $\cA_n$ extends uniquely to an anti-automorphism of the free monoid over $\cA_n$, thus giving an involution. Furthermore, it induces an anti-automorphism of the stylic monoid of rank $n$, which is also an involution (see \cite[Subsection~9.1]{abram_reutenauer_stylic_monoid_2021}). We will denote this involution by $\textup{*}$ and the stylic monoid with involution by $(\styln,\textup{*})$. Similarly, the operation of skew transposition, denoted $^\star$, is an involution on the monoid of unitriangular matrices over the tropical semiring.

We can extend the tropical representation of the stylic monoid of rank $n$ given in Section~\ref{section:tropical_representation_of_the_stylic_monoid} to the involution case:

\begin{proposition} \label{Involution Submonoid}
The morphism $\varrho_n \colon \styln \rightarrow U_{n+1}(\T)$ extends to a faithful morphism from $(\styln,\textup{*})$ to $(U_{n+1}(\T),\,^\star)$.
\end{proposition}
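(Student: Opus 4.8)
The plan is as follows. Since $\varrho_n$ is already a faithful morphism of monoids (established above), to prove the proposition it suffices to show that $\varrho_n$ intertwines the two involutions, i.e. $\varrho_n\!\left([w]_{\styln}^{*}\right) = \varrho_n\!\left([w]_{\styln}\right)^{\star}$ for every $w \in \cA_n^{*}$. Indeed, this identity shows simultaneously that the image $\varrho_n(\styln)$ is closed under $^{\star}$ (hence is a sub-involution-monoid of $(U_{n+1}(\T),\,^{\star})$) and that $\varrho_n$ is a morphism of involution monoids; faithfulness is then inherited unchanged, since it concerns only injectivity of the underlying map, which we already have.

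To establish the intertwining identity, recall that $^{\star}$ is skew transposition, that is, reflection in the anti-diagonal: $(A^{\star})_{i,j} = A_{\,n+2-j,\;n+2-i}$ for $A \in U_{n+1}(\T)$; in particular $^{\star}$ reverses products. Likewise, $*$ on $\styln$ is induced by the order-reversing map $x \mapsto \ol{x}$ on $\cA_n$ extended to an anti-automorphism of $\cA_n^{*}$, so $(w_1\cdots w_\ell)^{*} = \ol{w_\ell}\cdots\ol{w_1}$; in particular $[x]_{\styln}^{*} = [\ol{x}]_{\styln}$ on a generator. Since $\rho_n$ is multiplicative and both involutions reverse products, it is enough to check the identity on generators: if $\rho_n(\ol{x}) = \rho_n(x)^{\star}$ for every $x \in \cA_n$, then $\rho_n(w^{*}) = \rho_n(\ol{w_\ell})\cdots\rho_n(\ol{w_1}) = \rho_n(w_\ell)^{\star}\cdots\rho_n(w_1)^{\star} = \left(\rho_n(w_1)\cdots\rho_n(w_\ell)\right)^{\star} = \rho_n(w)^{\star}$, and this passes to $\styln$ because $*$ is well defined there (see \cite[Subsection~9.1]{abram_reutenauer_stylic_monoid_2021}).

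Finally I would carry out the generator computation, which is a direct unravelling of the definition of $\rho_n$. Using $\ol{\ol{x}} = x$, the entry $\rho_n(\ol{x})_{i,j}$ is $0$ when $i = j$, is $1$ when $i \leq x < j$, and is $-\infty$ otherwise. On the other hand $\rho_n(x)^{\star}_{i,j} = \rho_n(x)_{\,n+2-j,\;n+2-i}$, which is $0$ exactly when $i = j$ and is $1$ exactly when $n+2-j \leq \ol{x} < n+2-i$; rewriting $n+2-j \leq \ol{x}$ as $x < j$ and $\ol{x} < n+2-i$ as $i \leq x$ shows this happens precisely when $i \leq x < j$, so the two matrices agree entry by entry, giving $\rho_n(\ol{x}) = \rho_n(x)^{\star}$. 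I do not expect a genuine obstacle here: the only delicate point is keeping the index arithmetic of the anti-diagonal reflection consistent with the substitution $x \mapsto \ol{x} = n+1-x$, since it is exactly here that an off-by-one slip in the inequalities $i \leq \ol{x} < j$ versus $n+2-j \leq \ol{x} < n+2-i$ would occur.
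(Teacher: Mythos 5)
Your proposal is correct and follows essentially the same route as the paper: reduce the intertwining identity $\varrho_n(w^*) = \varrho_n(w)^\star$ to the generators via multiplicativity and the anti-automorphism property of both involutions, then verify $\rho_n(\ol{x}) = \rho_n(x)^\star$ by comparing the index conditions entry by entry. Your inequality $n+2-j \leq \ol{x} < n+2-i$ is the integer-shifted form of the paper's $n+1-j < \ol{x} \leq n+1-i$, so the computations agree.
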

\begin{proof}
It suffices to show that $\varrho_n(x)^\star = \varrho_n(x^*)$ for all $x \in \cA_n$ as then \[\varrho_n(xy)^\star = \varrho_n((xy)^*) = \varrho_n(y^*x^*) = \varrho_n(y)^\star\varrho_n(x)^\star.\] 
For $x \in \cA_n$, we have that $\varrho_n(x^*)_{i,j} = 1$ if and only if $i \leq n+1 - \ol{x} < j$ and $(\varrho_n(x)^\star)_{i,j} = 1$ if and only if $n+1 - j < \ol{x} \leq n+1 -i$. Thus, $\varrho_n(x^*)_{i,j} = 1$ if and only if $(\varrho_n(x)^\star)_{i,j} = 1$, and hence, by the definition of $\varrho_n$, we have that $\varrho_n(x^*) = \varrho_n(x)^\star$.
\end{proof}

In \cite[Section~5]{zhang_upper_tri_trop}, it was shown that the involution monoid $(U_{n+1}(\T),\,^\star)$ is non-finitely based, for $n \geq 3$. It was also shown that $(U_{3}(\T),\,^\star)$ satisfies, for each $k \in \N$, the identity
\[
x y_1 y_1^* y_2 y_2^* \cdots y_k y_k^* x^* z z^* \approx z z^* x y_1 y_1^* y_2 y_2^* \cdots y_k y_k^* x^*.
\]
As such, $(\styl_2,\textup{*})$ must also satisfy these identities. Similarly, it was also shown that $(U_{4}(\T),\,^\star)$ satisfies, for each $k \in \N$, the identity
\[
x_1 x_2 \cdots x_k x_1^* x_2^* \cdots x_k^* x_1 x_2 \cdots x_k \approx x_k^* x_{k-1}^* \cdots x_1^* x_k x_{k-1} \cdots x_1 x_k^* x_{k-1}^* \cdots x_1^*.
\]
As such, $(\styl_3,\textup{*})$ must also satisfy these identities.

However, as with the case of $(U_{n+1}(\B),\,^\star)$ where the involution is again given by skew transposition, we have that $(\styln,\textup{*})$ does not satisfy exactly the same identities as 
$(U_{n+1}(\T),\,^\star)$, in contrast to the monoid reduct case:

\begin{proposition}
For each $n \geq 2$, $(\styln,\textup{*})$ satisfies the identity
\begin{equation}\label{identity:x^*x^n}
x^* x^{n-1} \approx x^* x^{n},
\end{equation}
while $(U_{n+1}(\T),\,^\star)$ does not.
\end{proposition}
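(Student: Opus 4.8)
The plan is to obtain the positive half from the faithful involution representation together with a short tropical‑matrix computation, and the negative half from an explicit matrix. For $(\styln,\textup{*})$: by Proposition~\ref{Involution Submonoid} the map $\varrho_n\colon(\styln,\textup{*})\to(U_{n+1}(\T),\,^\star)$ is a faithful morphism of involution monoids, so it suffices to show $\rho_n(w^*)\rho_n(w)^{n-1}=\rho_n(w^*)\rho_n(w)^{n}$ for every $w\in\cA_n^*$. Writing $X=\rho_n(w)$, we have $\rho_n(w^*)=X^{\star}$ as in the proof of Proposition~\ref{Involution Submonoid}; since $X$ is unitriangular, $X\ge I_{(n+1)\times(n+1)}$ entrywise, whence $X^{\star}X^{n}\ge X^{\star}X^{n-1}$, so only the reverse inequality needs proof.

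Recall that each entry of a product of upper triangular tropical matrices is the maximum, over weakly increasing ``paths'' $i=k_0\le\dots\le k_p=j$ in $\{1,\dots,n+1\}$, of the sum of the corresponding factor entries. I would fix such a path computing a term of $(X^{\star}X^{n})_{i,j}$; it has $n+2$ nodes, so by pigeonhole $k_t=k_{t+1}$ for some $t$. If $t\ge1$ the repeated factor is a copy of $X$, contributes the entry $0=X_{k_t,k_t}$, and may be deleted to give a path for $(X^{\star}X^{n-1})_{i,j}$ of the same weight; so that term is $\le(X^{\star}X^{n-1})_{i,j}$. Otherwise the only repetition is $k_0=k_1$, and then $k_1<\dots<k_{n+1}$ is strictly increasing of length $n+1$, which forces $i=1$, $j=n+1$ and the path $1,1,2,\dots,n+1$, of weight $X^{\star}_{1,1}+\sum_{t=1}^{n}X_{t,t+1}=\sum_{t=1}^{n}\rho_n(w)_{t,t+1}$.

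This leaves that one weight to bound. By Lemma~\ref{lemma:maximum_length_strictly_decreasing}, $\rho_n(w)_{t,t+1}$ is $1$ if $\ol{t}\in\supp{w}$ and $-\infty$ otherwise, and $\{\ol1,\dots,\ol n\}=\cA_n$; hence $\sum_{t=1}^{n}\rho_n(w)_{t,t+1}$ is $-\infty$ unless $\supp{w}=\cA_n$, in which case it equals $n$. In that case $\supp{w^*}=\cA_n$ as well, so $w^*w^{n-1}$ is a concatenation of $n$ blocks each of full support; selecting in the $\ell$‑th block an occurrence of $n+1-\ell$ produces a strictly decreasing subsequence of $w^*w^{n-1}$ of length $n$, so by Lemma~\ref{lemma:maximum_length_strictly_decreasing} again $(X^{\star}X^{n-1})_{1,n+1}=\rho_n(w^*w^{n-1})_{1,n+1}\ge n$, and the path is again dominated. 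Thus $(X^{\star}X^{n})_{i,j}\le(X^{\star}X^{n-1})_{i,j}$ for all $i,j$, equality holds, and faithfulness yields \eqref{identity:x^*x^n} in $(\styln,\textup{*})$. (An alternative I would keep in reserve is purely combinatorial: $N(w^*w^{n-1})$ is the unique maximal $N$‑tableau on the support $\supp{w}\cup\ol{\supp{w}}$, and such a tableau is fixed by right‑insertion of any word over that support.)

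For the failure in $(U_{n+1}(\T),\,^\star)$ I would take $X\in U_{n+1}(\T)$ with $0$ on the diagonal, $2$ in position $(1,2)$, $1$ in each position $(i,i+1)$ with $2\le i\le n$, and $-\infty$ elsewhere; then $X^{\star}$ has the same shape but with the $2$ in position $(n,n+1)$. Since all finite entries of $X$ and of $X^{\star}$ lie on the diagonal and first superdiagonal, any path from $1$ to $n+1$ with $n$ edges moves up by exactly one at each step, so $(X^{\star}X^{n-1})_{1,n+1}=X^{\star}_{1,2}+\sum_{i=2}^{n}X_{i,i+1}=1+(n-1)=n$, whereas $(X^{\star}X^{n})_{1,n+1}\ge X^{\star}_{1,1}+(X^{n})_{1,n+1}=(X^{n})_{1,n+1}\ge 2+(n-1)=n+1$; hence $X^{\star}X^{n-1}\neq X^{\star}X^{n}$. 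I expect the only genuinely delicate step to be the edge case in the positive half: the path in $X^{\star}X^{n}$ whose repeated node sits next to the $X^{\star}$‑factor cannot be shortened to a path for $X^{\star}X^{n-1}$, and controlling it uses precisely the arithmetic of $\rho_n$ — its superdiagonal entries are single‑letter indicators and all its entries are $\le n$ — which is exactly the structure $U_{n+1}(\T)$ lacks in general.
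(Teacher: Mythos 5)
Your proof is correct, but it follows a genuinely different route from the paper's on both halves of the statement. For the positive half, the paper never leaves the stylic monoid: it splits on whether the evaluated variable has full support, observing that if $\supp{\phi(x)} = \cA_n$ then each side has a word representative containing the decreasing subsequence $n, n-1, \dots, 1$ and hence equals the absorbing element $0_{\styln}$, while if $\supp{\phi(x)} \neq \cA_n$ then already $\phi(x)^{n-1} = \phi(x)^{n}$ by aperiodicity of elements whose support has at most $n-1$ letters. You instead transfer the whole computation to $U_{n+1}(\T)$ via the faithful involution representation $\varrho_n$ and run a path-deletion argument on the tropical products $X^{\star}X^{n}$; the one non-deletable path $1,1,2,\dots,n+1$ is exactly where the special structure of the image of $\styln$ enters, and your control of it --- the superdiagonal entries of $\rho_n(w)$ are $0$/$1$ indicators of single letters, so that path's weight is at most $n$ and is matched by the full decreasing subsequence of $w^{*}w^{n-1}$ --- is sound. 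For the negative half the paper simply cites Zhang's Theorem~5.2, whereas you exhibit an explicit matrix witness, and your computation checks out: the unique strictly ascending path forces $(X^{\star}X^{n-1})_{1,n+1}=n$ while $(X^{\star}X^{n})_{1,n+1}\ge n+1$. The paper's argument is shorter and more elementary; yours is more self-contained on the matrix side and makes visible precisely which feature of $U_{n+1}(\T)$ the identity detects, namely superdiagonal entries exceeding $1$, which cannot occur in $\varrho_n(\styln)$.
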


\begin{proof}
By \cite[Theorem~5.2]{zhang_upper_tri_trop}, we already know that $(U_{n+1}(\T),\,^\star)$ does not satisfy the identity~(\ref{identity:x^*x^n}). Let $\phi: X \to \styln$ be an evaluation. If $\supp{\phi(x)} = \cA_n$, then $\supp{\phi(x^*)} = \cA_n$ and, as such, each side of the identity~\ref{identity:x^*x^n} has a word representative with a decreasing subsequence of all letters in $\cA_n$. As such, both sides of the identity are equal to $[n \cdots 1]_{\styln} = 0_{\styln}$.
\par On the other hand, suppose $\supp{\phi(x)} \neq \cA_n$. Then, $\phi(x)^{n-1} = \phi(x)^n$, since both elements have a word representative with the maximal decreasing subsequence of elements of its support, of length less than or equal to $n-1$. Equality follows. Therefore, $(\styln,\textup{*})$ satisfies the identity~(\ref{identity:x^*x^n}).
\end{proof}

As such, $(\styln,\textup{*})$ does not generate the same variety as $(U_{n+1}(\T),\,^\star)$, in constrast to the monoid reduct case. It remains open if $(\styln,\textup{*})$ and $(U_{n+1}(\B),\,^\star)$ generate the same variety.

Regarding the question of finite bases for the stylic monoids with involution, it is immediate that $(\styl_1,\textup{*})$ is finitely based, since it is a two-element monoid with a zero. Hence, it admits a finite basis, consisting of the following identities:
\[
x^2 \approx x \quad \text{and} \quad xy \approx yx \quad \text{and} \quad x^* \approx x.
\]

We say an involution semigroup $(S, \textup{*})$ is \emph{twisted} if the variety $\Var(S, \textup{*})$ it generates contains the involution semilattice $(Sl_3, \textup{*})$, where
\[
    Sl_3 = \{ 0, a, b \}
\]
is a semilattice such that $ab=ba=0$ and the involution is given by
\[
0^* = 0, \quad a^* = b, \quad b^*=a.
\]
Notice that any identity $u \approx v$ satisfied by $(Sl_3^1, \textup{*})$, that is, $(Sl_3, \textup{*})$ with an identity adjoined, is such that $\supp{u}=\supp{v}$. In can be easily seen that the variety generated by a twisted involution monoid also contains $(Sl_3^1, \textup{*})$. Therefore, the identities satisfied by any twisted involution monoid must have the same support in both sides of the identity.

\begin{lemma}
For each $n \geq 2$, $(\styln,\textup{*})$ is twisted.
\end{lemma}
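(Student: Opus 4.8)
The plan is to realize $(Sl_3,\textup{*})$ as a divisor of $(\styln,\textup{*})$, that is, as a homomorphic image of a sub-involution-semigroup. Since involution semigroup varieties are closed under taking sub-involution-semigroups and homomorphic images, this yields $(Sl_3,\textup{*})\in\Var(\styln,\textup{*})$, which is exactly the assertion that $(\styln,\textup{*})$ is twisted.

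First I would localize to the two extreme letters $1$ and $n$ of $\cA_n$. Because $\textup{*}$ is induced by the order-reversing permutation $x\mapsto\ol x$, we have $[1]_{\styln}^{*}=[n]_{\styln}$, and as words in the free monoid $(1n)^{*}=1n$ and $(n1)^{*}=n1$. Since the right $N$-algorithm applied to a word over $\{1,n\}$ never encounters any other letter, it behaves exactly as over a two-letter alphabet; a short computation gives that $N(1n)$ is the single row $\{1,n\}$, that $N(n1)$ is the two-row $N$-tableau with rows $\{1,n\}$ and $\{n\}$ (so in particular $[1n]_{\styln}\neq[n1]_{\styln}$), and that every product of $[1]_{\styln}$ and $[n]_{\styln}$ reduces to one of $[1]_{\styln},[n]_{\styln},[1n]_{\styln},[n1]_{\styln}$. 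Hence $S:=\{[1]_{\styln},[n]_{\styln},[1n]_{\styln},[n1]_{\styln}\}$ is a subsemigroup of $\styln$, closed under $\textup{*}$ (the first two elements are interchanged, the last two are $\textup{*}$-fixed), i.e. a sub-involution-semigroup of $(\styln,\textup{*})$ whose four elements are pairwise distinct since $n\geq2$. Equivalently, $S$ is the submonoid $\langle[1]_{\styln},[n]_{\styln}\rangle$, a $\textup{*}$-isomorphic copy of $(\styl_2,\textup{*})$ sitting inside $(\styln,\textup{*})$, with its identity removed.

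Next I would collapse $[1n]_{\styln}$ and $[n1]_{\styln}$: let $\theta$ be the equivalence on $S$ whose unique non-singleton class is $\{[1n]_{\styln},[n1]_{\styln}\}$. The crux is to check that $\theta$ is a congruence, i.e. that left- and right-multiplying either of $[1n]_{\styln},[n1]_{\styln}$ by any of the four elements of $S$ always lands inside a single $\theta$-class; this is a finite check using the $N$-insertions above, and it shows that $\{[1n]_{\styln},[n1]_{\styln}\}$ is an ideal of $S$ on which products are constant modulo $\theta$. As this class is $\textup{*}$-invariant, $\theta$ is a $\textup{*}$-congruence, and $(S/\theta,\textup{*})$ has exactly three elements: two $\textup{*}$-conjugate idempotents $a:=[1]_{\styln}\theta$ and $b:=[n]_{\styln}\theta$, and a $\textup{*}$-fixed zero $0:=[1n]_{\styln}\theta=[n1]_{\styln}\theta$ with $ab=ba=0$. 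This is precisely $(Sl_3,\textup{*})$, so $(Sl_3,\textup{*})\in\Var(\styln,\textup{*})$ and $(\styln,\textup{*})$ is twisted.

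The main obstacle is this congruence verification — the handful of right-$N$-insertion calculations confirming that $[1n]_{\styln}$ and $[n1]_{\styln}$ cannot be separated by one-sided multiplications within $S$ — but it is entirely routine. The only genuine idea is the localization to the letters $1$ and $n$, which makes the relevant piece of $(\styln,\textup{*})$ essentially independent of $n$.
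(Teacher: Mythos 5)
Your proposal is correct and follows essentially the same route as the paper: the paper takes the involution subsemigroup $\{[1]_{\styl_2},[2]_{\styl_2},[12]_{\styl_2},[21]_{\styl_2}\}$ of $(\styl_2,\textup{*})$, identifies $[12]_{\styl_2}$ with $[21]_{\styl_2}$ to obtain $(Sl_3,\textup{*})$, and then invokes the embedding of $(\styl_2,\textup{*})$ into $(\styln,\textup{*})$ — which is exactly your localization to the letters $1$ and $n$. The only difference is presentational (you work directly inside $\styln$ and spell out the congruence check), so there is nothing to fix.
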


\begin{proof}
Consider the quotient of the involution subsemigroup \[ \left\{ [1]_{\styl_2},[2]_{\styl_2},[12]_{\styl_2}, [21]_{\styl_2} \right\} \] of $(\styl_2,\textup{*})$ by the congruence which identifies $[12]_{\styl_2}$ with $[21]_{\styl_2}$. This quotient is isomorphic to $(Sl_3,\textup{*})$, hence $(\styl_2,\textup{*})$ is twisted. Furthermore, since $(\styl_2,\textup{*})$ embeds into $(\styln,\textup{*})$, for each $n \geq 3$, we have that $(\styln,\textup{*})$ is also twisted.
\end{proof}

By \cite[Theorem~4]{lee_non_finitely_based}, we have that any twisted involution semigroup whose semigroup reduct is non-finitely based must also be non-finitely based. Since $\styln$ is non-finitely based for $n \geq 4$, by Corollary~\ref{corollary:main_result}, the following is immediate:

\begin{corollary}
For any $n \geq 4$, $(\styln,\textup{*})$ is not finitely based.
\end{corollary}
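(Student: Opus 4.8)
The plan is to simply combine the three ingredients already in place. By the immediately preceding lemma, $(\styln, \textup{*})$ is twisted for every $n \geq 2$, and in particular for $n \geq 4$. By Corollary~\ref{corollary:main_result}, the semigroup (indeed monoid) reduct $\styln$ is non-finitely based whenever $n \geq 4$, since its equational theory is $J_n$ and $J_k$ is non-finitely based for $k \geq 4$. Finally, invoke \cite[Theorem~4]{lee_non_finitely_based}, which asserts that a twisted involution semigroup whose semigroup reduct is non-finitely based is itself non-finitely based.

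Concretely, I would write: fix $n \geq 4$; the lemma gives that $(\styln, \textup{*})$ is twisted; Corollary~\ref{corollary:main_result} gives that the reduct $\styln$ is non-finitely based; hence by \cite[Theorem~4]{lee_non_finitely_based}, $(\styln, \textup{*})$ is non-finitely based. There is essentially no obstacle here — the work has all been done in establishing the twistedness lemma and in identifying the equational theory of $\styln$ as $J_n$ — so the proof is a one-line citation chain, and the only thing to be careful about is checking that the hypotheses of \cite[Theorem~4]{lee_non_finitely_based} (twistedness plus a non-finitely based reduct) are exactly what we have verified.
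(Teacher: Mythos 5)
Your proposal is correct and is exactly the paper's argument: the corollary is stated as an immediate consequence of the twistedness lemma, the non-finite basability of $\styln$ for $n \geq 4$ from Corollary~\ref{corollary:main_result}, and \cite[Theorem~4]{lee_non_finitely_based}. Nothing further is needed.
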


Now, we look at the case of $(\styl_2,\textup{*})$:

\begin{proposition}
$(\styl_2,\textup{*})$ is non-finitely based.
\end{proposition}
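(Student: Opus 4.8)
The plan is to use the standard criterion for the failure of the finite basis property: a finite algebra is non-finitely based provided that for every $N\in\N$ there is an identity it satisfies which is not a consequence of its identities in at most $N$ variables; it suffices, for each $N$, to construct a finite involution semigroup $M_N$ that satisfies every identity of $(\styl_2,\textup{*})$ in at most $N$ variables yet fails some identity of $(\styl_2,\textup{*})$. The natural separating family is the sequence $(\sigma_k)_{k\in\N}$ recorded above,
\[
x y_1 y_1^* y_2 y_2^* \cdots y_k y_k^* x^* z z^* \approx z z^* x y_1 y_1^* y_2 y_2^* \cdots y_k y_k^* x^*,
\]
which $(\styl_2,\textup{*})$ satisfies since $\varrho_2$ embeds it into $(U_3(\T),\,^\star)$ by Proposition~\ref{Involution Submonoid} and $(U_3(\T),\,^\star)$ satisfies each $\sigma_k$ \cite{zhang_upper_tri_trop}. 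As $\sigma_k$ involves the $k+2$ variables $x,z,y_1,\dots,y_k$, a witness $M_N$ failing $\sigma_{N+1}$ automatically fails an identity of $(\styl_2,\textup{*})$ in more than $N$ variables, which is all that the criterion requires.

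The key steps, in order, are: (1) analyse the relevant evaluations into $(\styl_2,\textup{*})$ — here one exploits that $(\styl_2,\textup{*})$ is twisted (so both sides of any identity it satisfies share their support) and the faithful encoding $\styl_2\cong\varrho_2(\styl_2)\subseteq U_3(\T)$, under which, by Lemma~\ref{lemma:maximum_length_strictly_decreasing}, every matrix entry of $\varrho_2(w)$ equals the length of a longest strictly decreasing subsequence of $w$ within a prescribed range of letters and so lies in $\{-\infty,0,1,2\}$, making the evaluations of both sides of $\sigma_{N+1}$ explicitly computable; (2) for each $N$, construct $M_N$ — a finite involution semigroup in which a product $x\,e_1\cdots e_{N+1}\,x^*$ with $e_i=y_iy_i^*$ on distinct variables can be made \emph{not} to commute with a block $zz^*$, while the analogous product with at most $N-2$ central factors always does — built, for instance, from a semilattice-indexed family of copies of the twisted semilattice $(Sl_3,\textup{*})$ together with a few extra elements carrying the outer ``$x\cdots x^*$'' and ``$zz^*$'' blocks; (3) check $M_N\not\models\sigma_{N+1}$ via a concrete evaluation, and $M_N\models\varepsilon$ for every identity $\varepsilon$ of $(\styl_2,\textup{*})$ in at most $N$ variables; (4) conclude by the criterion. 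An alternative to (2)--(3) is to take a family of finite non-finitely based involution semigroups with finitely based semigroup reducts from the literature \cite{lee_non_finitely_based} — such a detour is needed because $\styl_2$ itself is finitely based by Corollary~\ref{corollary:main_result}, so the ``twisted with a non-finitely based reduct'' argument used above for $n\ge4$ is unavailable here — and to show it lies in $\Var(\styl_2,\textup{*})$ while still separating the $\sigma_k$; this reduces the problem to a membership check plus the already-established fact that $(\styl_2,\textup{*})\models\sigma_k$ for all $k$.

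The main obstacle is step (3), and specifically verifying that the witness semigroups satisfy \emph{every} identity of $(\styl_2,\textup{*})$ in boundedly many variables: one must rule out that some short identity of $(\styl_2,\textup{*})$ already forces the critical instance of $\sigma_{N+1}$ inside $M_N$, which requires a usable description of the low-variable part of the equational theory of $(\styl_2,\textup{*})$. The structural inputs available for this are the twistedness of $(\styl_2,\textup{*})$, the fact that its reduct generates $\Var(J_2)$, and the extra identity~\eqref{identity:x^*x^n} in its instance $n=2$, which separates $(\styl_2,\textup{*})$ from $(U_3(\T),\,^\star)$ and hence must be satisfied by every $M_N$; in particular, witness algebras designed for $(U_3(\T),\,^\star)$ cannot be reused verbatim, and the construction must be carried out inside the strictly smaller variety $\Var(\styl_2,\textup{*})$.
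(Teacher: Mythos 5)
There is a genuine gap here: your proposal is a plan whose central step is acknowledged but not carried out. You correctly identify the separating family $\sigma_k$ and correctly establish that $(\styl_2,\textup{*})$ satisfies each $\sigma_k$ via the faithful morphism into $(U_3(\T),\,^\star)$ from Proposition~\ref{Involution Submonoid} — this is exactly the role played by condition (4A) in the paper's argument. But the heart of any non-finite-basis proof is showing that these identities are \emph{not} consequences of any finite (equivalently, bounded-variable) fragment of the equational theory, and your steps (2)--(3) — construct witness algebras $M_N$ satisfying all $\le N$-variable identities of $(\styl_2,\textup{*})$ while failing $\sigma_{N+1}$ — are left entirely unexecuted. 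The phrase ``built, for instance, from a semilattice-indexed family of copies of the twisted semilattice $(Sl_3,\textup{*})$ together with a few extra elements'' is not a construction, and you yourself flag the verification that $M_N$ satisfies every short identity of $(\styl_2,\textup{*})$ as ``the main obstacle'' without resolving it. Your observation that identity~(\ref{identity:x^*x^n}) forces the witnesses to live in a strictly smaller variety than $\Var(U_3(\T),\,^\star)$ is correct and genuinely sharpens the difficulty, but it makes the missing construction harder, not easier.

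The paper closes precisely this gap by not reproving the general criterion at all: it invokes Theorem~4.1 of \cite{zhang_upper_tri_bool}, a packaged sufficient condition for non-finite basability of involution monoids, whose condition (4A) follows from the embedding into $(U_3(\T),\,^\star)$ and whose condition (4B) is verified by adapting Lemmas~4.9 and~4.10 of that paper with explicit evaluations into $\styl_2$ (e.g.\ sending $x$ and $y$ to $[1]_{\styl_2}$). The derivation-theoretic work your plan would have to redo from scratch is already done once and for all in that reference. To repair your proposal, either carry out the construction and verification of the $M_N$ in full — which is a substantial piece of work — or locate and apply a sufficient condition of this kind from the literature, as the paper does.
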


\begin{proof}
As a consequence of Proposition~\ref{Involution Submonoid}, $(\styl_2,\textup{*})$ satisfies condition (4A) of \cite[Theorem~4.1]{zhang_upper_tri_bool}. As such, we only need to prove that $(\styl_2,\textup{*})$ satisfies condition (4B) to show that it is not finitely based.
\par Notice that \cite[Lemma~4.9]{zhang_upper_tri_bool} holds for $(\styl_2,\textup{*})$: By the same reasoning as the one given in the proof of \cite[Lemma~4.9]{zhang_upper_tri_bool}, and replacing the evaluation used in that proof by any evaluation $\varphi: X \to \styl_2$ such that $\varphi(x) = [1]_{\styl_2}$ gives us the result. Furthermore, \cite[Lemma~4.10]{zhang_upper_tri_bool} also holds for $(\styl_2,\textup{*})$, by taking any evaluation $\varphi: X \to \styl_2$ which maps $x$ and $y$ to $[1]_{\styl_2}$. Thus, $(\styl_2,\textup{*})$ satisfies condition (4B), and is therefore non-finitely based.
\end{proof}

Finally, we look at the case of $(\styl_3,\textup{*})$:

\begin{proposition}
$(\styl_3,\textup{*})$ is non-finitely based.
\end{proposition}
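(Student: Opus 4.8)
The plan is to run the same kind of argument as for $(\styl_2,\textup{*})$, but now invoking the non-finite-basis criterion that underlies the proof in \cite{zhang_upper_tri_trop} that $(U_4(\T),\,^\star)$ is non-finitely based (parallel to \cite[Theorem~4.1]{zhang_upper_tri_bool}), and checking that $(\styl_3,\textup{*})$ satisfies its hypotheses. That criterion asserts that an involution monoid is non-finitely based provided it satisfies \textup{(i)} a prescribed infinite family of ``critical'' identities --- here the family
\[
x_1 x_2 \cdots x_k\, x_1^* x_2^* \cdots x_k^*\, x_1 x_2 \cdots x_k \approx x_k^* x_{k-1}^* \cdots x_1^*\, x_k x_{k-1} \cdots x_1\, x_k^* x_{k-1}^* \cdots x_1^*
\]
displayed before this proposition --- together with \textup{(ii)} a finite list of auxiliary identities whose role is to obstruct any short deduction collapsing that family (the analogues of \cite[Lemmas~4.9 and~4.10]{zhang_upper_tri_bool}).

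For hypothesis \textup{(i)}, Proposition~\ref{Involution Submonoid} embeds $(\styl_3,\textup{*})$ into $(U_4(\T),\,^\star)$, which satisfies the displayed family by \cite[Section~5]{zhang_upper_tri_trop}; hence so does $(\styl_3,\textup{*})$, as already observed. For hypothesis \textup{(ii)} I would argue exactly as in the $(\styl_2,\textup{*})$ case: check each auxiliary identity by picking an evaluation $\varphi\colon X \to \styl_3$ that sends the variables in play to $[1]_{\styl_3}$ (so their $\textup{*}$-images are $[3]_{\styl_3}$), so that both sides are represented by words with the same support and with only short strictly decreasing subsequences; by the $N$-tableau description of $\equiv_{\styl}$ together with Lemmas~\ref{lemma:row_bump} and~\ref{lemma:up_arrow_to_decreasing_subsequence}, the two sides then represent the same element of $\styl_3$. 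Since $(\styl_3,\textup{*})$ is twisted (established above), every identity it satisfies has the same support on both sides, which keeps this case analysis finite.

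The main obstacle is that Zhang's criterion is calibrated to $(U_4(\T),\,^\star)$, whose equational theory is strictly contained in that of $(\styl_3,\textup{*})$ --- indeed the two do not even generate the same variety, by the identity $x^* x^{n-1} \approx x^* x^{n}$ established above --- so a priori the extra identities of $(\styl_3,\textup{*})$ could furnish new short deductions of the critical family. The delicate point is therefore to re-establish the non-derivability step \emph{for $(\styl_3,\textup{*})$}: one must confirm that the finitely many witnessing (anti)homomorphic images used in that step still satisfy all the short identities of $(\styl_3,\textup{*})$, which again reduces to verifying the $(\styl_3,\textup{*})$-analogues of the relevant lemmas of \cite{zhang_upper_tri_trop} by the collapsing-evaluation technique above. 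I expect this to succeed because the two monoids are so close: $(\styl_3,\textup{*})$ is an involution submonoid of $(U_4(\T),\,^\star)$, and by the faithful representation of Section~\ref{section:tropical_representation_of_the_stylic_monoid} the stylic class of a word is recovered from the maximal lengths of its strictly decreasing subsequences drawn from intervals of the alphabet --- data that varies by at most $1$ between adjacent matrix entries by Corollary~\ref{corollary:difference_of_one_in_finite_adjacent_entries_of_matrix} --- so the short identities of $(\styl_3,\textup{*})$ cannot escape the class controlled by those lemmas. Once this is pinned down, non-finite-basedness of $(\styl_3,\textup{*})$ follows from the criterion exactly as for $(\styl_2,\textup{*})$.
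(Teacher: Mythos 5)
Your overall architecture matches the paper's: establish the ``positive'' hypothesis of Zhang's sufficient condition via the embedding of Proposition~\ref{Involution Submonoid} into $(U_{4}(\T),\,^\star)$, then verify the remaining structural hypotheses directly for $(\styl_3,\textup{*})$. But the execution plan for the second step has two genuine problems. First, you are calibrated to the wrong instance of the criterion: for rank $3$ the paper invokes \cite[Theorem~5.2]{zhang_upper_tri_bool}, which has three conditions (5A), (5B), (5C), and verifying (5B) and (5C) requires the analogues of \emph{four} lemmas, \cite[Lemmas~5.9--5.12]{zhang_upper_tri_bool}, not just the two lemmas (4.9 and 4.10) of the rank-$2$ criterion you model your argument on. Second, and more seriously, your proposed verification --- a single collapsing evaluation sending every variable to $[1]_{\styl_3}$ so that ``both sides represent the same element'' --- misreads what these lemmas do and would not establish them. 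They are not auxiliary identities to be satisfied; they are structural restrictions on any identity that the involution monoid \emph{does} satisfy, proved by choosing evaluations that \emph{separate} the two sides unless the identity has the required form. The paper needs several distinct, carefully chosen evaluations for this: $x\mapsto[23]_{\styl_3}$ versus $x\mapsto[12]_{\styl_3}$ (a $\textup{*}$-pair) for the analogue of Lemma~5.10; $x\mapsto[3]_{\styl_3},\,y\mapsto[2]_{\styl_3}$ versus $x\mapsto[1]_{\styl_3},\,y\mapsto[2]_{\styl_3}$ for Lemma~5.11, where the point is that the image of $w$ avoids the zero $[321]_{\styl_3}$; $t\mapsto[3]_{\styl_3},\,s\mapsto[2]_{\styl_3}$ versus $t\mapsto[12]_{\styl_3},\,s\mapsto[21]_{\styl_3}$ for Lemma~5.12; and, for Lemma~5.9, not an evaluation at all but the involution submonoid $Z'=\{[\varepsilon]_{\styl_3},[123]_{\styl_3},[2312]_{\styl_3},[321]_{\styl_3}\}$ with $[123]_{\styl_3}^2=[2312]_{\styl_3}$, combined with \cite[Lemma~6.1.4]{almeida_1994_finite_semigroups}. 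An evaluation collapsing all variables to $[1]_{\styl_3}$ cannot separate anything in the two-variable lemmas, so it cannot yield the needed restrictions.

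On your ``main obstacle'': the worry that the extra identities of $(\styl_3,\textup{*})$ (such as $x^*x^2\approx x^*x^3$) might furnish new short deductions is moot. \cite[Theorem~5.2]{zhang_upper_tri_bool} is a sufficient condition for an \emph{arbitrary} involution monoid; one verifies its hypotheses for $(\styl_3,\textup{*})$ itself, and the comparison between $\Var(\styl_3,\textup{*})$ and $\Var(U_{4}(\T),\,^\star)$ never enters the argument. The real work is entirely in the evaluation choices above, which your proposal does not supply.
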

\begin{proof}
As a consequence of Proposition~\ref{Involution Submonoid}, $(\styl_3,\textup{*})$ satisfies condition (5A) of \cite[Theorem~5.2]{zhang_upper_tri_bool}. As such, we only need to prove that $(\styl_3,\textup{*})$ satisfies conditions (5B) and (5C) to show that it is not finitely based.
\par Similarly to the previous case, we have that \cite[Lemmas~5.9--5.12]{zhang_upper_tri_bool} hold for $(\styl_3,\textup{*})$. 
\par In the case of \cite[Lemma~5.9]{zhang_upper_tri_bool}, consider the involution submonoid $Z' = \{ [\varepsilon]_{\styl_3}, [123]_{\styl_3}, [2312]_{\styl_3}, [321]_{\styl_3} \}$. Notice that $[123]_{\styl_3}^2 = [2312]_{\styl_3}$. The proof then follows from \cite[Lemma~6.1.4]{almeida_1994_finite_semigroups}.
\par For \cite[Lemma~5.10]{zhang_upper_tri_bool}, consider the evaluations $\varphi: X \to \styl_3$ which maps $x$ to $[23]_{\styl_3}$ and $\phi: X \to \styl_3$ which maps $x$ to $[12]_{\styl_3}$. Notice that $[23]^*_{\styl_3} = [12]_{\styl_3}$. Then, using the reasoning in the proof of \cite[Lemma~5.10]{zhang_upper_tri_bool}, the result follows. Notice that it is also relatively easy to see that $[u u^* u]_{\styl_3}= [u^* u u^*]_{\styl_3}$.
\par For \cite[Lemma~5.11]{zhang_upper_tri_bool}, consider the evaluations $\varphi: X \to \styl_3$ which maps $x$ to $[3]_{\styl_3}$ and $y$ to $[2]_{\styl_3}$ and $\phi: X \to \styl_3$ which maps $x$ to $[1]_{\styl_3}$ and $y$ to $[2]_{\styl_3}$. Replace the first evaluation in the proof of \cite[Lemma~5.11]{zhang_upper_tri_bool} by $\varphi$ and the second by $\phi$, and the image of $w$ by both $\varphi$ and $\phi$ will be different from $[321]_{\styl_3}$. The result follows and $(\styl_3,\textup{*})$ satisfies condition (5B).
\par Finally, for \cite[Lemma~5.12]{zhang_upper_tri_bool}, we consider the substitutions $\varphi: X \to \styl_3$ which maps $t$ to $[3]_{\styl_3}$ and $s$ to $[2]_{\styl_3}$ and $\phi: X \to \styl_3$ which maps $t$ to $[12]_{\styl_3}$ and $s$ to $[21]_{\styl_3}$, and the result follows. Hence, $(\styl_3,\textup{*})$ satisfies condition (5C). As such, $(\styl_3,\textup{*})$ is non-finitely based.
\end{proof}

Therefore, we obtain the following result:

\begin{theorem}
The involution monoid $(\styln,\textup{*})$ is finitely based if and only if $n=1$. 
\end{theorem}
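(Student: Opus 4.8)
The plan is to obtain the theorem by assembling the partial results established immediately above, since the statement is a biconditional whose two implications have, between them, already been verified across the preceding proposition, corollary and lemma. I would therefore organise the proof as a short synthesis, treating the two directions separately and being careful only to check that the relevant rank ranges exhaust $\N$.

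For the ``if'' direction, I would observe that when $n = 1$ the involution monoid $(\styl_1, \textup{*})$ is the two-element monoid with a zero equipped with the identity involution (the order-reversing permutation of a one-letter alphabet is trivial). The explicit list
\[
x^2 \approx x, \qquad xy \approx yx, \qquad x^* \approx x
\]
displayed just before the involution corollaries is a finite equational basis for it, so $(\styl_1, \textup{*})$ is finitely based.

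For the ``only if'' direction I would prove the contrapositive: for every $n \geq 2$ the involution monoid $(\styln, \textup{*})$ is non-finitely based. This I would split according to the rank. The case $n = 2$ is precisely the proposition asserting that $(\styl_2, \textup{*})$ is non-finitely based, proved by verifying conditions (4A)--(4B) of Zhang's criterion via suitable evaluations into $\styl_2$. The case $n = 3$ is the analogous proposition for $(\styl_3, \textup{*})$, obtained by checking conditions (5A)--(5C) of the corresponding criterion through the evaluations into $\styl_3$ exhibited there. Finally, for every $n \geq 4$, the preceding corollary gives that $(\styln, \textup{*})$ is not finitely based: it is twisted, because $(\styl_2, \textup{*})$ embeds into it and a quotient of an involution subsemigroup of $(\styl_2, \textup{*})$ is isomorphic to $(Sl_3, \textup{*})$; its semigroup reduct $\styln$ is non-finitely based by Corollary~\ref{corollary:main_result}, since the equational theory $J_n$ is non-finitely based for $n \geq 4$; and Lee's theorem forces a twisted involution semigroup whose reduct is non-finitely based to be non-finitely based as well. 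Since $\{1\}$, $\{2,3\}$ and $\{\,n : n \geq 4\,\}$ partition $\N$, the two directions combine to give the biconditional.

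I do not expect any genuine obstacle at this stage: all the substantive work — checking the Zhang-style conditions (4B), (5B) and (5C) for the small ranks, and establishing twistedness together with the invocation of Lee's result for the large ranks — is carried out in the earlier propositions, lemma and corollary, and is used here purely as a black box. The only point requiring care is ensuring that the ``if'' direction is fully covered by the explicit three-identity basis recorded earlier and that the three rank ranges leave no gap.
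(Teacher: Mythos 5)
Your proposal is correct and is exactly the synthesis the paper intends: the theorem is stated with the preamble ``Therefore, we obtain the following result,'' relying on the explicit three-identity basis for $(\styl_1,\textup{*})$ and the three preceding non-finite-basis results covering $n=2$, $n=3$, and $n\geq 4$. Your organisation of the two directions and the check that the rank ranges partition $\N$ matches the paper's (implicit) argument.
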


\bibliographystyle{plain}
\bibliography{stylic_monoid_identities.bib}
\end{document}